\newtheorem{thm}{Theorem}
\newtheorem{theor}{Theorem}[section]
\newtheorem{corol}[theor]{Corollary}
\newtheorem{propo}[theor]{Proposition}
\newtheorem{defi}[theor]{Definition}
\newtheorem{lem}[theor]{Lemma}
\theoremstyle{remark}
\newtheorem{remark}[theor]{Remark}
\theoremstyle{definition}
\numberwithin{equation}{section}
\renewcommand{\bar}{\overline}
\newcommand{\Irr}{\mathrm{Irr}}
\newcommand{\R}{{\mathbb{R}}}
\newcommand{\F}{{\mathbb{F}}}
\newcommand{\N}{{\mathbb{N}}}
\newcommand{\Q}{{\mathbb{Q}}}
\newcommand{\Z}{{\mathbb{Z}}}
\newcommand{\uG}{{\underline{G}}}
\newcommand{\uX}{{\underline{X}}}
\newcommand{\uY}{{\underline{Y}}}
\newcommand{\uZ}{{\underline{Z}}}
\newcommand{\cG}{{\mathcal{G}}}
\newcommand{\cO}{{\mathcal{O}}}
\newcommand{\cY}{{\mathcal{Y}}}
\newcommand{\cZ}{{\mathcal{Z}}}
\newcommand{\tr}{\mathrm{tr}}
\newcommand{\sfw}{{\sf {W}}}
\newcommand{\sfx}{{\sf {X}}}
\newcommand{\sgn}{{\mathsf {sgn}}}
\newcommand{\Hom}{\mathrm{Hom}}
\renewcommand{\Pr}{\mathbf {P}}
\newcommand{\ZB}{\mathbf{Z}}
\newcommand{\CB}{\mathbf{C}}
\newcommand{\Gal}{\mathrm{Gal}}
\newcommand{\GL}{\mathrm{GL}}
\newcommand{\SL}{\mathrm{SL}}
\newcommand{\SU}{\mathrm{SU}}
\newcommand{\PSL}{\mathrm{PSL}}
\newcommand{\PGL}{\mathrm{PGL}}
\newcommand{\Or}{\mathrm{O}}
\newcommand{\Un}{\mathrm{U}}
\newcommand{\Sp}{\mathrm{Sp}}
\newcommand{\CSp}{\mathrm{CSp}}
\newcommand{\Span}{\mathrm{Span}}
\newcommand{\Stab}{\mathrm{Stab}}
\newcommand{\Fix}{{\sf fix}}
\newcommand{\Spec}{\mathrm{Spec}\;}
\newcommand{\Cl}{\mathrm{Cl}}
\newcommand{\ad}{\mathrm{ad}}
\renewcommand{\sc}{\mathrm{sc}}
\newcommand{\im}{\mathrm{im}}
\newcommand{\AAA}{{\sf A}}
\newcommand{\SSS}{{\sf S}}
\newcommand{\trans}{\mathrm{trans}}
\newcommand{\prim}{\mathrm{prim}}
\newcommand{\Alt}{{\raise 2pt\hbox{$\scriptstyle\bigwedge$}}}
\newcommand{\go}{\rightarrow}
\newcommand{\e}{\epsilon}
\begin{document}
\title{Probabilistic Waring problems for finite simple groups}

\author{Michael Larsen}
\email{mjlarsen@indiana.edu}
\address{Department of Mathematics\\
    Indiana University \\
    Bloomington, IN 47405\\
    U.S.A.}

\author{Aner Shalev}
\email{shalev@math.huji.ac.il}
\address{Einstein Institute of Mathematics\\
    Hebrew University \\
    Givat Ram, Jerusalem 91904\\
    Israel}

\author{Pham Huu Tiep}
\email{tiep@math.rutgers.edu}
\address{Department of Mathematics\\
    Rutgers University \\
    Piscataway, NJ 08854-8019 \\
    U.S.A.}

\begin{abstract}
The probabilistic Waring problem for finite simple groups asks whether
every word of the form $w_1w_2$, where $w_1$ and $w_2$ are non-trivial words
in disjoint sets of variables, induces almost uniform distributions on finite simple groups
with respect to the $L^1$ norm. Our first main result provides a positive solution to this problem.

We also provide a geometric characterization of words inducing almost uniform distributions
on finite simple groups of Lie type of bounded rank, and study related random walks.

Our second main result concerns the probabilistic $L^{\infty}$ Waring problem for finite
simple groups. We show that for every $l \ge 1$ there exists (an explicit) $N = N(l)=O(l^4)$, such that if
$w_1, \ldots , w_N$ are non-trivial words of length at most $l$ in pairwise disjoint sets of variables,
then their product $w_1 \cdots w_N$ is almost uniform on finite simple groups with respect to the
$L^{\infty}$ norm. The dependence of $N$ on $l$ is genuine.
This result implies that, for every word $w = w_1 \cdots w_N$ as above, the word map
induced by $w$ on a semisimple algebraic group over an arbitrary field is a flat morphism.

Applications to  representation varieties, subgroup growth, and random generation
are also presented. In particular we show that, for certain one-relator groups $\Gamma$,
a random homomorphism from $\Gamma$ to a finite simple group $G$ is surjective with probability
tending to $1$ as $|G| \go \infty$.
\end{abstract}

\subjclass{Primary 20P05; Secondary 11P05, 20C30, 20C33, 20D06, 20G40}

\thanks{ML was partially supported by NSF grants DMS-1401419 and DMS-1702152.
AS was partially supported by ISF grant 686/17 and the Vinik Chair
of mathematics which he holds. PT was partially supported by NSF grant DMS-1840702, and the Joshua Barlaz Chair in Mathematics.
The authors were also partially supported by BSF grant 2016072.}

\thanks{The paper is partially based upon work supported by the NSF under grant DMS-1440140 while AS and PT were in residence at MSRI (Berkeley, CA), during the Spring 2018
semester. It is a pleasure to thank the Institute for the hospitality and support.}

\thanks{The authors are grateful to the referee for careful reading and insightful comments that helped greatly improve the paper.}

\maketitle

\tableofcontents

\section{Introduction}

In the past two decades there has been much interest in word maps
and related Waring type problems (see for instance \cite{S2} and the references
therein).  Recall that a word is an element $w = w(x_1, \ldots , x_d)$ of the
free group $F_d$ on $x_1, \ldots , x_d$. Given any group $G$, the word $w$ gives rise to a
word map $w_G\colon G^d \to G$ induced by substitution.  When the group $G$ is understood,
we denote the map simply $w$.

Word maps on finite simple groups have attracted particular attention.
Here and throughout this paper, by a finite simple group we mean a non-abelian finite simple group.
Two words $w_1, w_2$ are said to be {\it disjoint} if they are words in disjoint sets of variables.
If $w = w_1w_2$ where $w_1, w_2 \ne 1$ are disjoint words, then it was shown in \cite{LST}
(following partial results from \cite{LS1, LS2})
that the word map $w$ is surjective on all sufficiently large finite simple groups.
This provides a best possible solution to the Waring problem for finite simple
groups, inspired by the classical Waring problem in number theory.

The \emph{probabilistic} Waring problem for finite simple groups asks whether, for $w=w_1w_2$
as above, the push-forward distribution $p_{w,G} = w_* U_{G^d}$  on a finite simple
group $G$ tends to the uniform distribution $U_G$
in the $L^1$ norm (see \eqref{def-norm}) as the order of $G$ tends to infinity.  That is, for a word $w$,
a finite group $G$ and an element $g \in G$, we let $p_{w,G}(\{g\})$ denote the probability
that $w(g_1, \ldots , g_d) = g$ when $g_i \in G$ are chosen uniformly and independently:
\[
p_{w,G}(\{g\}) = \frac{|w^{-1}(g)|}{|G|^d}.
\]
It is conjectured that, for finite simple groups $G$, we have
\[
\lim_{|G| \to \infty} \Vert p_{w,G}- U_G \Vert_{L^1}= 0
\]
(see for instance \cite[4.5]{S2}).
When this holds we say that $w$ is {\it almost uniform} on finite simple groups.

This conjecture has already been established in some cases.
In \cite[7.1]{GS} it is proved for $w = x_1^2 x_2^2$ (and it is also shown in \cite{GS} that the commutator
word $[x_1,x_2]$ is almost uniform). In \cite[1.1]{LS4} the conjecture is proved for $w = x_1^m x_2^n$
where $m, n$ are arbitrary non-zero integers.  It is also shown in \cite[1.2]{LS4} that admissible words, i.e.,
words in which each variable appears exactly twice, once as $x_i$ and once as $x_i^{-1}$, are almost uniform on
finite simple groups.
In \cite{LS1} the conjecture is established for arbitrary $w_1 w_2$ for alternating groups ${\sf A}_n$ (see Theorem 1.18 there and the
discussion following it). In this paper we prove the conjecture in full by confirming it for simple groups
of Lie type.

For a real function $f$ on a finite set $G$ and a real number $p > 0$, we define
\[
\Vert f \Vert_{L^p} = (|G|^{p-1} \sum_{g \in G} |f(g)|^p)^{1/p}.
\]
In particular,
\begin{equation}\label{def-norm}
\Vert f \Vert_{L^1} = \sum_{g \in G} |f(g)|,~
\Vert f \Vert_{L^{\infty}} = |G| \cdot \max_{g \in G} |f(g)|.
\end{equation}

Our first main result is as follows.

\begin{thm}
\label{prob-waring}
Let $w_1, w_2 \ne 1$ be disjoint words and let $w = w_1w_2$.  Then
\[
\lim_{|G| \to \infty} \Vert p_{w,G}- U_G \Vert_{L^1}= 0,
\]
where $G$ ranges over all finite simple groups.
\end{thm}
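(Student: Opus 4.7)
The plan is to reduce the $L^{1}$ bound to an $L^{2}$ bound and attack the latter via Fourier analysis on $G$. In the normalization of \eqref{def-norm} one has $\|f\|_{L^{1}}\le\|f\|_{L^{2}}$ by Cauchy--Schwarz, so it suffices to prove $\|p_{w,G}-U_{G}\|_{L^{2}}\to 0$. Since $w_{1}$ and $w_{2}$ involve disjoint variables, the class function $p_{w,G}$ is the convolution $p_{w_{1},G}\ast p_{w_{2},G}$, and the identity $\chi\ast\chi=(|G|/\chi(1))\chi$ for $\chi\in\Irr(G)$ gives, writing $\widehat f(\chi)=\langle f,\chi\rangle$,
\[
\widehat{p_{w,G}}(\chi)=\frac{|G|}{\chi(1)}\,\widehat{p_{w_{1},G}}(\chi)\,\widehat{p_{w_{2},G}}(\chi).
\]
The trivial-character term cancels in $p_{w,G}-U_{G}$, so Parseval produces
\[
\|p_{w,G}-U_{G}\|_{L^{2}}^{2}=|G|^{4}\sum_{\chi\neq 1_{G}}\frac{|\widehat{p_{w_{1},G}}(\chi)|^{2}\,|\widehat{p_{w_{2},G}}(\chi)|^{2}}{\chi(1)^{2}}.
\]

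The essential input is a single-word power-saving estimate: for each non-trivial word $w$ there is $\beta=\beta(w)>0$ such that, for every finite simple group $G$ of sufficiently large order and every $\chi\in\Irr(G)$,
\[
|G|\cdot\bigl|\widehat{p_{w,G}}(\chi)\bigr|=\Bigl|\frac{1}{|G|^{d}}\sum_{\vec g\in G^{d}}\chi(w(\vec g))\Bigr|\le\chi(1)^{1-\beta}.
\]
I would establish this by splitting the sum according to whether $w(\vec g)$ lies in the locus where the Liebeck--Shalev / Larsen--Shalev--Tiep character bounds $|\chi(g)|\le\chi(1)^{1-\e}$ apply; the mass on the exceptional locus is controlled by the $\ell^{\infty}$ bound on $p_{w,G}$, which follows from word-image and fibre-size estimates of the type used in \cite{LST}. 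Substituting this into the $L^{2}$ identity yields
\[
\|p_{w,G}-U_{G}\|_{L^{2}}^{2}\ll\sum_{\chi\neq 1_{G}}\chi(1)^{-2(\beta_{1}+\beta_{2})}=\zeta^{G}\bigl(2(\beta_{1}+\beta_{2})\bigr)-1,
\]
where $\zeta^{G}(s)=\sum_{\chi\in\Irr(G)}\chi(1)^{-s}$ is the Witten zeta function. By Liebeck--Shalev, $\zeta^{G}(s)\to 1$ as $|G|\to\infty$ through finite simple groups of Lie type of bounded rank for any $s>0$, and through those of unbounded rank once $s$ exceeds a fixed critical value; the alternating case is already covered by \cite{LS1}, so the work is concentrated on Lie type.

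The principal obstacle is the uniformity in the character bound: producing a single positive exponent $\beta(w)$ that works across \emph{every} sufficiently large simple group of Lie type, including classical groups of unbounded rank, and is large enough to beat the critical $s$-threshold of $\zeta^{G}$ there. For special families of words (powers, commutators, admissible words, $x_{1}^{m}x_{2}^{n}$) such exponents already appear in the literature, but for a general $w$ the decay rate must be extracted from geometric invariants of the word map, plausibly via the Hausdorff-dimension invariant $\Hdim$ developed elsewhere in this paper, together with Deligne--Lusztig-style bounds on character ratios at regular semisimple elements. Should the resulting $\beta$ be insufficient in the unbounded-rank regime, one can amplify by iterating the convolution with additional disjoint factors (as in the forthcoming $L^{\infty}$ Waring result), multiplying the effective exponent before the final comparison with $\zeta^{G}$.
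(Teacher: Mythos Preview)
Your proposal has a genuine gap: the ``single-word power-saving estimate''
\[
|a_{w,\chi}|=\Bigl|\frac{1}{|G|^d}\sum_{\vec g}\chi(w(\vec g))\Bigr|\le\chi(1)^{1-\beta}
\]
that you take as the essential input is precisely the open conjecture recorded at the end of \S4 of the paper. Your sketch (split according to whether $w(\vec g)$ lies in the good locus for character bounds, and control the bad locus by $\ell^\infty$ fibre estimates) does not close: the bad locus is the set of $g$ with large centralizer, and while the paper does prove (Theorem~\ref{small-centralizer}) that $\Pr[w(\vec g)\text{ bad}]\to 0$, the quantitative rate it extracts is only of order $q^{-r^{1/4}}$, which is \emph{not} $\le\chi(1)^{-\delta}$ uniformly in $\chi$ (the smallest nontrivial $\chi(1)$ is of order $q^{r}$). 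No known fibre bound from \cite{LST} or elsewhere gives a saving that scales as a power of $\chi(1)$ for arbitrary $w$.

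There is also an arithmetic slip that would kill the argument even granting the conjecture. From $|a_{w_i,\chi}|\le\chi(1)^{1-\beta_i}$ and $a_{w,\chi}=a_{w_1,\chi}a_{w_2,\chi}/\chi(1)$ one gets $|a_{w,\chi}|^2\le\chi(1)^{2-2(\beta_1+\beta_2)}$, hence
\[
\Vert p_{w,G}-U_G\Vert_{L^2}^2\le\zeta_G\bigl(2(\beta_1+\beta_2)-2\bigr)-1,
\]
not $\zeta_G(2(\beta_1+\beta_2))-1$. Convergence then requires $\beta_1+\beta_2>1$, which the conjecture does not promise; and your proposed ``amplification by additional disjoint factors'' is unavailable, since the hypothesis gives exactly two factors $w_1,w_2$.

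The paper avoids Fourier-coefficient bounds altogether and works directly in $L^1$. Its key mechanism (Theorem~\ref{epsilon}, via Theorem~\ref{small-centralizer} and the character bounds of \cite{GLT2,GLT3}) is that with probability $1-o(1)$ the element $w_i(\vec g)$ falls into a conjugacy class $C_i$ satisfying $|\chi(C_i)|\le\chi(1)^{\e}$ for \emph{all} $\chi$ simultaneously. Conditioning on such $C_1,C_2$, the Frobenius formula gives $p(C_1,C_2,g)=|G|^{-1}(1+o(1))$ for the $(1-o(1))|G|$ elements $g$ with the same property, and $L^1$ convergence follows. This argument never needs, and does not establish, $L^2$ convergence; indeed the paper leaves $L^2$ convergence open except for $\PSL_2(q)$ (Corollary~\ref{PSL2}).
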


Let $G$ and $w \in F_d$ be as in Theorem \ref{prob-waring}. Then it follows from the theorem that, as $|G| \to \infty$
and $S \subseteq G^d$ satisfies $|S|/|G|^d \to 1$, we have
\[
|w(S)|/|G| \to 1,
\]
namely, almost all elements $g \in G$ can be expressed in the form $g = w(g_1, \ldots , g_d)$ where $(g_1, \ldots , g_d) \in S$.
Combining this observation with suitable known results we obtain some immediate applications of Theorem 1. For example,
using \cite[Theorem]{LiSh1} we deduce that {\it almost all elements of $G$ have the form $w(g_1, \ldots , g_d)$ where
$\langle g_i, g_j \rangle = G$ for all $1 \le i < j \le d$.}
The same holds if at least one of the following conditions holds: 
\begin{enumerate}[\rm(i)] 
\item all $g_i$ are regular semisimple if $G$ is of Lie type and bounded rank; 
\item
$n^{(1/2-\e)\log n} \le o(g_i) \le n^{(1/2+\e)\log n}$ for $G = \AAA_n$ where $\e > 0$ and $o(g)$ is the order of $g$
(see \cite[Theorem]{ET}); 
\item $|\CB_G(g_i)| \le q^{(1+\e)r}$ where $\e > 0$ and $G$ is classical of rank $r$ over $\F_q$.
\end{enumerate}
(This follows by combining Corollary 1.2(1) of \cite{FG} with Lemma 5.3 of \cite{S1}).

The proof of Theorem \ref{prob-waring} makes use of the classification of finite simple groups.  Since the result is asymptotic in nature, we do not need to consider sporadic groups at all, so it remains to deal with groups of Lie type.  For groups of classical type of unbounded rank, we combine arguments of combinatorial flavor with essential use of strong new character estimates proved in \cite{GLT2, GLT3}.  For groups of Lie type of bounded rank (including exceptional groups) we provide two proofs:
one character-theoretic, and the other geometric. The latter proof is based on the following characterization of almost uniform words
in bounded rank, which is of independent interest.

\begin{thm}\label{irred}
Let $r$ and $d$ be positive integers, and  $w\in F_d$ a non-trivial word.  The following conditions are equivalent:

\begin{enumerate}[\rm(i)]
\item As $G$ ranges over finite simple groups of Lie type of rank $\le r$,
$$\lim_{|G| \to \infty} \Vert p_{w,G}- U_G \Vert_{L^1}= 0.$$
\item For every prime  $p$ and every split simply connected semisimple group $\uG$ over
$\F_p$ of rank $\leq r$, there exists a power $q$ of $p$ such that
$$\lim_{n\to \infty} \frac{|w(\uG(\F_{q^n}))|}{|\uG(\F_{q^n})|}=1.$$
\item For every simply connected semisimple group $\uG$ of rank $\leq r$
over any field $\Bbbk$, the evaluation morphism $w\colon \uG^d\to \uG$ has geometrically irreducible generic fiber.

\end{enumerate}
\end{thm}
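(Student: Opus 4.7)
The plan is to establish the cycle of implications (iii)$\Rightarrow$(i), (iii)$\Rightarrow$(ii), (ii)$\Rightarrow$(iii), and (i)$\Rightarrow$(iii). The central geometric tool throughout is the Lang--Weil estimate, and the bounded-rank hypothesis is essential: only finitely many split simply connected Chevalley group schemes over $\Z$ have rank $\le r$, so the relevant varieties form a finite family and we obtain uniform error terms.

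For the forward implications (iii)$\Rightarrow$(i) and (iii)$\Rightarrow$(ii), a spreading-out argument under (iii) produces a dense open $U \subseteq \uG$ such that the restriction $w\colon w^{-1}(U) \to U$ has geometrically irreducible fibers of dimension $(d-1)\dim \uG$. Lang--Weil then gives $|w^{-1}(g)(\F_q)| = |\uG(\F_q)|^{d-1}(1 + O(q^{-1/2}))$ uniformly for $g \in U(\F_q)$, while $\uG \setminus U$ contributes negligibly. This yields both the image-density statement in (ii) and the stronger $L^1$ equidistribution $\|p_{w,\uG(\F_q)} - U_{\uG(\F_q)}\|_{L^1} \to 0$. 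To descend to a finite simple quotient $G = \uG(\F_q)/Z$ with $|Z|$ bounded in $r$, the pushforward identity $p_{w,G}(\pi g) = \sum_{z \in Z} p_{w,\uG(\F_q)}(zg)$ combined with the triangle inequality gives $\|p_{w,G} - U_G\|_{L^1} \le \|p_{w,\uG(\F_q)} - U_{\uG(\F_q)}\|_{L^1}$, which tends to zero; since every finite simple group of Lie type of rank $\le r$ arises as such a quotient (up to finitely many exceptions handled separately), we obtain (i).

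For (ii)$\Rightarrow$(iii) we argue contrapositively. If the generic fiber of $w$ on some $\uG$ has $k \ge 2$ geometric components, by spreading out we may assume they are defined over a finite extension of the prime subfield and are permuted by a finite Galois group $\Gamma$ acting on the function field of $\uG$. For $g \in \uG(\F_{q^n})$, the Frobenius $\phi_g \in \Gamma$ determines which components are defined over $\F_{q^n}$ at $g$; applying Lang--Weil component-wise then shows the fiber size is governed by the fixed-point set of $\phi_g$. A Chebotarev-type argument shows that a positive fraction of $g$ have $\phi_g$ fixing no component (using $k \ge 2$ and the non-triviality of the $\Gamma$-action on components), so the image density is bounded strictly below $1$ as $n \to \infty$, contradicting (ii).

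The main obstacle is (i)$\Rightarrow$(iii): pushforward only decreases $L^1$, so in principle the non-uniformity of $p_{w,\uG(\F_q)}$ arising from a reducible fiber could be absorbed by the central quotient. Again arguing contrapositively from (iii) failing, the task is to choose $(q,n)$ so that the reducibility-driven non-uniformity of $p_{w,\uG(\F_{q^n})}$ \emph{survives} pushforward to $G_n = \uG(\F_{q^n})/Z$. This is a compatibility question between the Frobenius-in-$\Gamma$ orbits on the $k$ components and the $Z$-action on the fiber: we need Frobenius classes whose sets of fixed components are not $Z$-invariant. Working over an auxiliary Galois extension of $\Z[\uG]$ that encodes both the component permutation and the central action, a Chebotarev density argument produces infinitely many $(q,n)$ meeting this condition, along which $\|p_{w,G_n} - U_{G_n}\|_{L^1}$ remains bounded away from $0$. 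The model case $w = x^2$ on $\SL_2$ with $q \equiv 1 \pmod 4$ (so that $-1$ is a square, making exactly half of $\PSL_2(\F_q)$ a square) illustrates the mechanism and shows that a simple congruence restriction already suffices in low-dimensional examples; carrying out the analogous selection in general is where the technical weight of the argument lies.
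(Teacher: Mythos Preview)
Your (iii)$\Rightarrow$(i) argument has a genuine gap: the Suzuki and Ree groups are not ``finitely many exceptions'' but three infinite families, and they do \emph{not} arise as $\uG(\F_q)/Z$ for any rational Frobenius. They are fixed points of a generalized Frobenius $F$ whose square is a standard $p$-Frobenius. Lang--Weil as usually stated does not apply to such $F$, so your uniform fibre count breaks down here. The paper handles this case with a separate lemma invoking Fujiwara's theorem (Deligne's conjecture on the trace formula for a correspondence composed with high Frobenius powers), which gives the needed point count $|w^{-1}(\bar x)^{F^{2s+1}}| = p^{(2s+1)k/2}(1+O(p^{-1/4}))$ for odd powers of $F$.

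For (i)$\Rightarrow$(iii), your contrapositive Chebotarev argument may be made to work, but it is substantially harder than necessary, and your sketch does not actually establish the crucial claim that for $k\ge 2$ components one can always arrange Frobenius classes whose fixed-component sets are not $Z$-invariant. The paper sidesteps the ``absorption by the central quotient'' problem entirely with the following device. Let $Z$ be the center and consider the morphism
\[
f\colon \uG^d\times Z\longrightarrow \uG,\qquad (g_1,\dots,g_d,z)\mapsto w(g_1,\dots,g_d)\,z.
\]
An elementary computation gives the \emph{equality}
\[
\Vert f_* U_{\uG(\F_q)^d\times Z} - U_{\uG(\F_q)}\Vert_{L^1}=\Vert p_{w,\uG(\F_q)/Z}-U_{\uG(\F_q)/Z}\Vert_{L^1},
\]
so condition (i) on the simple quotient is literally the almost-equidistribution statement for $f$ on the simply connected group. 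The source $\uG^d\times Z$ has $|Z|$ components of equal dimension; the Lang--Weil/Chebotarev argument you already use (applied now to a disconnected source) shows that almost-equidistribution forces \emph{each} component to have geometrically irreducible generic fiber. The identity component is exactly $w_{\uG}$, and you are done. This replaces your delicate compatibility analysis between Frobenius orbits on components and the $Z$-action by a one-line identity.
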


If these equivalent conditions hold  we say that $w$ is \emph{almost uniform in rank $\leq r$}.
If this is true for all $r$, we say that $w$ is \emph{almost uniform in bounded rank}.
Our geometric proof of Theorem \ref{prob-waring} in bounded rank is based on Theorem \ref{irred} above and
the fact that the generic fiber for any word of the form $w = w_1w_2$, where $w_1$ and $w_2$ are disjoint non-trivial words,
is geometrically irreducible.

Theorem \ref{irred} shows, in particular, that words that are surjective on large enough
finite simple groups of bounded rank are also almost uniform in bounded rank.
This is by no means obvious. In Segal's monograph \cite{Seg} a word $w \in F_d$ is said to be \emph{silly} if
$w \in x_1^{e_1} \ldots  x_d^{e_d} F_d'$ where $\gcd(e_1, \ldots , e_d) = 1$. It is observed in \cite[3.1.1]{Seg}
that silly words are precisely the words that are surjective on all groups.
It therefore follows that silly words are almost uniform in bounded rank.
In our next result we estimate the probability that a random word has the above properties.

For any $d>1$ and $n \ge 0$, we let $\sfw_{n,d}$ denote
the random element of $F_d$ obtained from an $n$ step random walk on $F_d$ with steps uniformly distributed in
$\{x_1^{\pm 1},\ldots,x_d^{\pm 1}\}$.

\begin{thm}
\label{positive}
\begin{enumerate}[\rm(i)]

\item
For all $d>1$ and all $n>0$, the probability that $\sfw_{n,d}$ is surjective on all groups exceeds $1/3$
and tends to $1$ as $d \to \infty$.

\item
For all $d>1$ and all $n>0$, the probability that $\sfw_{n,d}$ is almost uniform in bounded rank exceeds
$1/3$ and tends to $1$ as $d \to \infty$.
\end{enumerate}
\end{thm}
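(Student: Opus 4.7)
The strategy is to reduce both parts to estimating $q_{n,d}:=\Pr[\sfw_{n,d}\text{ is silly}]$. By Segal's criterion, silliness of $\sfw_{n,d}$ is equivalent to the image in the abelianization $F_d^{\mathrm{ab}}=\Z^d$ being a primitive vector, i.e.\ $\gcd(E_1,\ldots,E_d)=1$, where $E_i$ denotes the exponent sum of $x_i$; by construction $(E_1,\ldots,E_d)$ is the position at time $n$ of the simple random walk on $\Z^d$ with step distribution uniform on $\{\pm e_1,\ldots,\pm e_d\}$. Moreover, as noted in the paragraph preceding the theorem, silliness implies almost uniformity in bounded rank through Theorem~\ref{irred} (surjectivity on every $\uG(\F_{q^n})$ directly gives condition (ii)). Hence both (i) and (ii) reduce to the two statements $q_{n,d}>1/3$ for every $d\ge 2$, $n\ge 1$, and $q_{n,d}\to 1$ as $d\to\infty$ with $n$ fixed.

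The $d\to\infty$ limit is immediate from a birthday-paradox estimate: the probability that the $n$ drawn variable indices are pairwise distinct is $\prod_{k=0}^{n-1}(1-k/d)$, which tends to $1$, and on that event each $|E_i|\le 1$ with not all $E_i$ zero, forcing $\gcd(E_1,\ldots,E_d)=1$.

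For the uniform bound $q_{n,d}>1/3$, the plan is to use the M\"obius expansion $q_{n,d}=\sum_{k\ge 1}\mu(k)P_k$, with $P_k:=\Pr[k\mid E_i\text{ for all }i]$ the return-to-origin probability at time $n$ of the induced simple random walk on $(\Z/k\Z)^d$. Fourier analysis on $(\Z/k\Z)^d$ expresses $P_k$ as the average over characters $\xi$ of $\bigl(\tfrac{1}{d}\sum_{i=1}^d\cos(2\pi\xi_i/k)\bigr)^n$; as $n\to\infty$ this yields $P_k\to k^{-d}$ for $k$ odd and $P_k\to 2k^{-d}$ for $k$ even (a parity anomaly reflecting $E_1+\cdots+E_d\equiv n\pmod 2$). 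Summing with M\"obius weights gives the asymptotic values $q_{n,d}\to(1-2^{1-d})/((1-2^{-d})\zeta(d))$ for $n$ even and $q_{n,d}\to\prod_{p\ge 3}(1-1/p^d)$ for $n$ odd. In every case the limit exceeds $4/\pi^2>1/3$ for all $d\ge 2$, with the worst case $d=2$, $n$ even attaining exactly $4/\pi^2\approx 0.405$.

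The main obstacle is upgrading these asymptotic-in-$n$ estimates to bounds uniform in $n\ge 1$. The plan is to (a) enumerate $q_{n,d}$ directly for $n$ below some threshold $n_0$, and (b) for $n>n_0$ bound the corrections between $P_k$ and its $n\to\infty$ limit by explicit Fourier decay estimates, showing the corrections are summable in $k$ and negligible against the $4/\pi^2-1/3>0.07$ margin. The critical regime is $d=2$, $n$ even, which I would attack via the change of variables $(U,V):=(E_1+E_2,E_1-E_2)$: under this bijection $U$ and $V$ become independent one-dimensional simple random walks on $\Z$, so $\gcd(E_1,E_2)$ is related to $\gcd(U,V)$, and the analysis collapses to a one-dimensional Fourier computation controlling $\Pr[p\mid U]$ for small odd primes $p$. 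Part (ii) is then immediate from (i) by the implication silly $\Rightarrow$ almost uniform in bounded rank established before the theorem.
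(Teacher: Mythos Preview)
Your reduction is the same as the paper's: both parts follow once you show that the abelianized walk $\sfx_{n,d}$ is primitive with probability $>1/3$ and with probability $\to 1$ as $d\to\infty$, using Segal's criterion for (i) and Theorem~\ref{irred} for (ii). Where you and the paper diverge is in the engine driving the uniform $>1/3$ bound.

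The paper first reduces general $d$ to $d=2$ by projecting onto a pair of coordinates (conditioning on the number $n_{i,j}$ of steps touching those two coordinates yields an honest $\Z^2$ walk of that length). For $d=2$ it then proves a combinatorial monotonicity lemma for the point probabilities $(a,b)_n:=\Pr[\sfx_n=(a,b)]$ under two partial orders, and uses it to exhibit, for each odd prime $p$ and each nonzero lattice point in $p\Z^2$, a packet of $(p+1)^2/4$ points of no smaller probability; this yields the clean, \emph{uniform in $n$} estimate $\Pr[\sfx_n\in p\Z^2\setminus\{0\}]<4/(p+1)^2$. That inequality handles all primes $p\ge 61$ in one stroke; the primes $\le 59$ (and the $p=2,3$ contribution bundled as $a_{n,6}$) are dispatched by a finite Markov-chain bound for $n\ge 1000$ plus direct computation for $n<1000$. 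Your M\"obius--Fourier scheme would also land on a ``small primes by computation, large primes by a tail bound'' split, but your tail bound comes from spectral decay of the averaging operator on $(\Z/k\Z)^d$, which is inherently $n$-dependent; converting that into something summable over $k$ \emph{and} uniform over all $n\ge 1$ is exactly the hard part you flag as the ``main obstacle,'' and you have not carried it out. The paper's monotonicity argument sidesteps this entirely: no decay rates, no thresholds in $k$, just a pointwise comparison valid for every $n$. Your $(U,V)$ decoupling is a nice observation and does make the one-dimensional Fourier sums tractable, but relating $\gcd(E_1,E_2)$ to $\gcd(U,V)$ still requires a small case analysis on parity.

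Two further points. First, your M\"obius identity $q_{n,d}=\sum_{k\ge 1}\mu(k)P_k$ is not literally correct because the origin contributes to every $P_k$; you need to subtract the return probability before summing (the paper handles this by keeping a separate $(0,0)_n$ term). Second, your $d\to\infty$ argument only treats $n$ fixed. The paper proves the stronger statement that the probability tends to $1$ along any sequence with $d\to\infty$, regardless of $n$: for $n$ and $d$ both large it finds, with probability $\to 1$, $k$ disjoint coordinate pairs each receiving at least one step, and then the $d=2$ bound gives failure probability $<(2/3)^k$ on each pair independently.
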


It has recently been shown in \cite[Theorem B]{CH} that a finite group $G$ is nilpotent if and only if
all words $w$ which are surjective on $G$ induce the uniform distribution $U_G$ on $G$. Using this and part (i)
of Theorem \ref{positive} it follows that {\it the probability that $\sfw_{n,d}$ is uniform on all finite
nilpotent groups exceeds $1/3$ and tends to $1$ as $d \to \infty$}.

Next, we turn to almost uniformity results with respect to other norms.

For the groups $\PSL_2(q)$, we can strengthen Theorem~\ref{prob-waring}, replacing the $L^1$ norm
by the $L^2$ norm. Indeed, by Corollary \ref{PSL2} below, if $G = \PSL_2(q)$ where $q$ ranges over prime powers,
then
\[
\lim_{q \to \infty} \Vert p_{w,G}- U_G \Vert_{L^2}= 0.
\]
It would be interesting to find out which families of finite simple groups satisfy this property.
We note that if $w$ is $[x_1,x_2]$ or $x_1^2x_2^2$ then we have
\[
\lim_{|G| \to \infty} \Vert p_{w,G}- U_G \Vert_{L^2}= 0
\]
as $G$ ranges over finite simple groups; indeed, this follows from \cite[\S2]{GS}.

It is also interesting to obtain almost uniformity results in the $L^{\infty}$ norm or the $L^p$ norm for arbitrary $p > 1$.
In this sense, for any fixed $k \ge 1$, the product
$w_1 \cdots  w_k$ of $k$ non-trivial pairwise disjoint words need not be almost uniform on all finite simple groups. Indeed we
may take $w_i = x_i^n$ for $n \geq k+2$. If $G$ is an alternating group of large degree (compared to $n$),
then it follows from Lemmas 2.17 and 2.18 of \cite{LiSh2} that $p_{x_i^n,G}(1) \ge |G|^{-1/n}$. If $G$ is a classical group of large rank
(compared to $n$) over a field with $q$ elements, then for some constant $c(n) > 0$ one has that
$$p_{x_i^n,G}(1) \ge q^{-c(n)}|G|^{-1/n}> |G|^{-1/(n-1)}$$
by \cite[Theorem 4.3]{LiSh3}.
Hence,
$$p_{w,G}(\{1\}) \ge |G|^{-k/(n-1)},$$
and $w$ is not almost uniform in $L^{\infty}$. If moreover
we take $n \geq kp/(p-1)+2$ then we see that $w$ is not almost uniform in $L^p$ whenever $p > 1$.

However, we do show in Corollary \ref{four} below, that if $w=w_1w_2w_3w_4$, a product of
four pairwise disjoint non-trivial words, then $w$ is almost uniform on
$\PSL_2(q)$ with respect to the $L^{\infty}$ norm. This result is best possible
in the sense that it fails to hold for some products of three disjoint words.
Indeed, it is shown in \cite[p. 1406]{S1} that $x_1^2x_2^2x_3^2$ is not almost uniform on
$\PSL_2(q)$ with respect to the $L^{\infty}$ norm.

Our second main result concerns the probabilistic Waring problem for finite simple groups
with respect to the $L^{\infty}$ norm.

\begin{thm}
\label{main-inf}
For a positive integer $l$ define $N(l) = 2\cdot 10^{18} l^4$. Let $N \ge N(l)$ be an integer and $w = w_1 \cdots  w_N$ a product of pairwise disjoint non-trivial words of length at most $l$. Then
\[
\lim_{|G| \to \infty} \Vert p_{w,G}- U_G \Vert_{L^\infty}= 0,
\]
where $G$ runs over all finite simple groups.
\end{thm}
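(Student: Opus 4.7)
The plan is to do Fourier analysis on the space of class functions of $G$.  Each $p_{w_i,G}$ is conjugation-invariant, and because the $w_i$ have pairwise disjoint variables, the overall push-forward factors as a convolution
$$p_{w,G} \;=\; p_{w_1,G} \ast p_{w_2,G} \ast \cdots \ast p_{w_N,G}.$$
Writing $p_{w_i,G} = \sum_{\chi\in\Irr(G)} a_{\chi,i}\,\chi$ with $a_{\chi,i} = \frac{1}{|G|}\sum_g p_{w_i,G}(g)\overline{\chi(g)}$, and using the convolution identity $\chi\ast\chi' = \delta_{\chi,\chi'}\tfrac{|G|}{\chi(1)}\chi$ for $\chi,\chi'\in\Irr(G)$, one obtains
$$p_{w,G}(g) - \frac{1}{|G|} \;=\; \sum_{\chi\neq 1}\Big(\prod_{i=1}^N a_{\chi,i}\Big) \frac{|G|^{N-1}}{\chi(1)^{N-1}}\,\chi(g).$$
Bounding crudely $|\chi(g)|\le\chi(1)$ and multiplying by $|G|$, the desired $L^\infty$ estimate reduces to showing
$$\sum_{\chi\neq 1}\Big(\prod_{i=1}^N |G|\,|a_{\chi,i}|\Big)\,\chi(1)^{2-N}\;\longrightarrow\;0 \quad\text{as }|G|\to\infty.$$

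The crucial ingredient will be a uniform character estimate for short words: constants $C(l)$ and $\sigma(l)>0$, depending only on $l$, such that for every non-trivial word $u$ of length at most $l$, every sufficiently large finite non-abelian simple group $G$, and every $\chi\in\Irr(G)$,
$$\big|\mathbb{E}[\chi(u(g_1,\ldots,g_d))]\big| \;\le\; C(l)\,\chi(1)^{1-\sigma(l)}.$$
Since $|G|\,|a_{\chi,i}| = |\mathbb{E}[\chi(w_i)]|$, substituting yields
$$\Big|p_{w,G}(g) - \frac{1}{|G|}\Big| \;\le\; \frac{C(l)^N}{|G|}\sum_{\chi\neq 1}\chi(1)^{2-N\sigma(l)}.$$
Now choose $N=N(l)$ large enough that $s := N\sigma(l)-2$ exceeds the critical Witten-zeta exponent for every family of finite simple groups: then $\sum_{\chi\neq 1}\chi(1)^{-s}\to 0$ as $|G|\to\infty$ (by Liebeck--Shalev-type zeta bounds), so multiplying the display by $|G|$ gives $\|p_{w,G}-U_G\|_{L^\infty}\to 0$.

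The main obstacle is establishing the uniform character bound above with the exponent $\sigma(l)$ depending only on the length bound $l$, and not on the specific word or on the particular group.  The plan is to combine the deep pointwise character estimates of \cite{GLT2,GLT3}, which yield $|\chi(g)|\le\chi(1)^{1-c}$ for $g$ in suitably generic conjugacy classes, with information on how the class distribution of a short word $u(g_1,\ldots,g_d)$ concentrates: for $|G|$ large, a random substitution lands in a ``generic'' class with probability close to $1$, so the pointwise bound applies to the bulk of the substitutions and the non-generic tuples contribute negligibly to $\mathbb{E}[\chi(u)]$.  The delicate point is to carry this out uniformly across all families: in particular small-dimensional or Weil-type representations, and groups of bounded rank (where the Witten-zeta threshold is worst) will need separate treatment, and there we expect to invoke Theorem \ref{irred} together with the geometric analysis of rank-bounded word maps developed earlier in the paper.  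Tracking how $\sigma(l)$ deteriorates with $l$, which forces $N(l)\to\infty$, explains the genuine dependence asserted in the statement.
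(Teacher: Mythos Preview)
Your overall Fourier-analytic setup is fine, and your reduction to bounding $\sum_{\chi\neq 1}\bigl(\prod_i |G|\,|a_{\chi,i}|\bigr)\chi(1)^{2-N}$ matches the paper's starting point.  The genuine gap is your ``crucial ingredient'': the uniform bound
\[
\bigl|\mathbb{E}[\chi(u(g_1,\ldots,g_d))]\bigr|\le C(l)\,\chi(1)^{1-\sigma(l)}
\]
for every word $u$ of length $\le l$ and every $\chi\in\Irr(G)$.  This is precisely the estimate the paper states as an \emph{open conjecture} at the end of \S4, and the paper does \emph{not} prove it.  Your sketch for deducing it --- split into generic and non-generic classes, use \cite{GLT2,GLT3} on the former, and say the latter ``contribute negligibly'' --- does not go through: the non-generic contribution is at most $\Pr[u\text{ non-generic}]\cdot\chi(1)$, and to make this $\le C(l)\chi(1)^{1-\sigma(l)}$ you would need $\Pr[u\text{ non-generic}]\le C(l)\chi(1)^{-\sigma(l)}$ uniformly in $\chi$.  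For classical groups of rank $r$ the smallest nontrivial degree is of order $q^r$, so you would need the non-generic probability to be $\le q^{-\Omega(r)}$; but the centralizer estimates actually available (Theorem~\ref{small-centralizer}) only give $q^{-O(r^{1/4})}$.

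The paper avoids this obstacle by \emph{not} bounding each $a_{\chi,i}$ individually.  Instead it expands over $N$-tuples $(C_1,\ldots,C_N)$ of conjugacy classes and proves a dichotomy: each class $C$ either satisfies $|\chi(C)|\le\chi(1)^{1-\epsilon}$ for all $\chi$, or satisfies $\Pr[W_i\in C]\le|G|^{-\epsilon}$.  For an $N$-tuple with a large contribution, most of the $C_i$ (at least $p\ge N-O(1/\epsilon)$ of them) must fall into the second alternative, and then the product $\prod_i\Pr[W_i\in C_i]$ picks up a factor $|G|^{-p\epsilon}$.  It is this \emph{multiplicative} use of the $N$ disjoint factors --- rather than a per-word Fourier bound --- that makes the argument close, and it is also why $N$ must be taken large in terms of $l$ (to ensure $p\epsilon$ beats the number of $N$-tuples).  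For alternating groups the same scheme is run with ``generic'' meaning few fixed points, using Propositions~\ref{bound1} and~\ref{bound2}.  Your proposal, as written, assumes away the hardest step.
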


This theorem generalizes Proposition 8.5 of \cite{S1} dealing with Lie type groups of bounded rank
(where $N$ depends on the rank $r$ and not on $l$; cf. Proposition \ref{waring-r}),
and Theorem 2.8 of \cite{S1} where $w_i$ are commutators and $N=2$.
Unlike Theorem \ref{prob-waring}, which was established long ago for alternating groups,
Theorem \ref{main-inf} is new (and highly non-trivial) also for ${\sf A}_n$ -- note that in this case the bound for $N(l)$ is
substantially smaller, see Proposition \ref{waring-alt}.
The proof of Theorem \ref{main-inf} is rather complicated, combining combinatorial and character methods.
In particular it follows from the theorem that
$x_1^l x_2^l \cdots  x_{N(l)}^l$ is almost uniform in $L^\infty$ on finite simple groups,
which may be regarded as a probabilistic non-commutative analogue of the Waring problem
in number theory. The discussion prior to Theorem \ref{main-inf} shows that the conclusion of
the theorem does not hold for $N \le l-2$, so the dependence of $N$ on $l$ is genuine.

Our third main result concerns flatness of certain word maps on algebraic groups,
representation varieties and subgroup growth of some one-relator groups,
as well as random generation of finite simple groups. While parts (i)--(iv) below
are applications of Theorem \ref{main-inf}, parts (v) and (vi) are more challenging
and require various additional tools.

Let us say that a word $w \in F_d$ is {\it even} if its image in the abelianization $F_d/[F_d,F_d]$
is a square, and that $w$ is {\it odd} otherwise (see also Definition \ref{parity1} below).

\begin{thm}
\label{applications}
For every positive integer $l$ there exists a positive integer $N^*(l)$ such that the following statement holds. Let $N \ge N^*(l)$ and $d$ be positive integers.
Suppose $w = w_1 \cdots  w_N\in F_d$ is a product of pairwise disjoint non-trivial words of length at most $l$,
and $\Gamma = \langle x_1, \ldots, x_d \; | \; w(x_1, \ldots , x_d) = 1 \rangle$. Then all the following statements hold.

\begin{enumerate}[\rm(i)]
\item For every field $\Bbbk$ and every semisimple algebraic group $\uG$ over $\Bbbk$,
the word morphism $w\colon \uG^d\to \uG$ is flat.

\item For every field $\Bbbk$ and every semisimple algebraic group $\uG$ over $\Bbbk$, the dimension of the $\Bbbk$-variety $\Hom(\Gamma, \uG)$, i.e., the Krull dimension of its coordinate ring, is
$(d-1)\dim \uG$.

\item For every field $\Bbbk$ and every positive integer $n$, the dimension
of the $\Bbbk$-variety $\Hom(\Gamma,\GL_n)$ is  $(d-1)n^2 + a$,
where $a=0$ if $w \not\in [F_d,F_d]$ and $a=1$ otherwise.

\item The number $a_n(\Gamma)$ of index $n$ subgroups of $\Gamma$ satisfies
\[
a_n(\Gamma) \sim bn \cdot (n!)^{d-2},
\]
where $b=1$ if $w$ is odd and $b=2$ if $w$ is even.
Thus
$\dfrac{a_n(\Gamma)}{a_n(F_{d-1})} \go b$ as $n \go \infty$.

\item The number $m_n(\Gamma)$ of maximal subgroups of $\Gamma$ of index $n$ satisfies
\[
m_n(\Gamma) \sim bn  \cdot(n!)^{d-2},
\]
where $b$ is as above. Thus $\dfrac{m_n(\Gamma)}{a_n(\Gamma)} \to 1$ as $n \to\infty$.

\item The probability that a random homomorphism from $\Gamma$ to a finite
simple group $G$ is an epimorphism tends to $1$ as $|G| \to \infty$.
\end{enumerate}
\end{thm}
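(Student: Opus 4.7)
The plan is to fix $N = N(l)$ as in Theorem \ref{main-inf}, possibly enlarging it to control the subgroup contributions in (vi). Parts (i)--(iii) will follow from miracle flatness. For a simply connected simple algebraic group $\uG$ over a field $k$, one reduces by standard specialization arguments to $k = \overline{\F_p}$ and then counts $\F_q$-points. Theorem \ref{main-inf} yields the uniform bound
\[
|w^{-1}(g)(\F_q)| \le (1 + o(1))\,|\uG(\F_q)|^{d-1}
\]
for all $g \in \uG(\F_q)$, so by Lang--Weil every geometric fiber of $w\colon \uG^d \to \uG$ has dimension at most $(d-1)\dim \uG$; combined with the surjectivity of $w$ (from Theorem \ref{prob-waring} or \cite{LST}) the fibers have exactly this dimension. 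Since $\uG^d$ is Cohen--Macaulay and $\uG$ is regular, miracle flatness delivers (i) for simple $\uG$, and the general semisimple case follows by decomposing into simple factors (flatness is preserved under products). Part (ii) follows at once from (i): the fiber $\Hom(\Gamma, \uG) = w^{-1}(1_{\uG})$ has dimension $(d-1)\dim \uG$. For (iii), decompose $\GL_n$ through its center: if $w \in [F_d, F_d]$ the image of the word map lies in $\SL_n$, contributing the extra central dimension $a = 1$; if $w \notin [F_d, F_d]$ the map is dominant onto $\GL_n$ and $a = 0$. Both cases reduce to flatness of $w$ on $\SL_n$ (from (i)) combined with the flat product map $\G_m^d \to \G_m$ coming from the abelianization of $w$.

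For (iv) and (v) I use Hall's recursion $a_n(\Gamma) = t_n(\Gamma)/(n-1)!$, where $t_n(\Gamma)$ counts transitive homomorphisms $\Gamma \to \SSS_n$. Set $h_n(\Gamma) = |\Hom(\Gamma, \SSS_n)| = |w^{-1}(1_{\SSS_n})|$. Applying Theorem \ref{main-inf} to the simple group $\AAA_n$ and bookkeeping the parity (the image of $w$ lies in $\AAA_n$ exactly when $w$ is even) gives
\[
h_n(\Gamma) \sim b\,(n!)^{d-1}
\]
with $b$ as in the statement. A standard bound showing that intransitive tuples form an $o(1)$-fraction yields $t_n(\Gamma) \sim h_n(\Gamma)$, whence $a_n(\Gamma) \sim b n (n!)^{d-2}$. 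For (v) one must further show that almost every transitive $\Gamma$-action of degree $n$ is primitive: each imprimitive transitive action factors through a wreath product $\SSS_k \wr \SSS_{n/k}$ for some proper divisor $k \mid n$, and summing the contributions of these wreath products---using Theorem \ref{main-inf} on each alternating simple factor together with Stirling-type counting---shows the imprimitive contribution is $o(h_n(\Gamma))$, hence $m_n(\Gamma)/a_n(\Gamma) \to 1$.

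For (vi), the number of non-surjective $\phi\colon \Gamma \to G$ is bounded by $\sum_{M} |\Hom(\Gamma, M)|$, summed over maximal subgroups $M < G$. Combining $|\Hom(\Gamma, G)| \sim b|G|^{d-1}$ from Theorem \ref{main-inf} with upper bounds of the form $|\Hom(\Gamma, M)| \le c(w)\,|M|^{d-1}$ (which follow from the abelianization constraint on $w$ together with further Lang--Weil and character-theoretic arguments for the various families of maximal subgroups), the non-surjectivity probability reduces to a sum of the shape $\sum_{M} [G:M]^{-(d-1)}$. For $d \ge 2$ this tends to $0$ as $|G| \to \infty$ by the Liebeck--Shalev estimates on subgroup zeta functions of finite simple groups; if needed, enlarging $N(l)$ increases the effective exponent and kills borderline terms.

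The main obstacle is part (v): while the asymptotic for $a_n(\Gamma)$ reduces cleanly to one application of Theorem \ref{main-inf} on $\AAA_n$, establishing $m_n(\Gamma) \sim a_n(\Gamma)$ requires uniform control on word-map fibers in wreath products $\SSS_k \wr \SSS_{n/k}$ across all divisors $k$ of $n$. This is delicate because Theorem \ref{main-inf} speaks of simple groups, not wreath products; one must combine the $L^\infty$ estimates on the alternating factors with a careful combinatorial argument on block systems, in the spirit of M\"uller's and Lubotzky--Segal's subgroup-growth machinery. Part (vi) presents a parallel difficulty, namely producing uniform bounds on $|\Hom(\Gamma, M)|$ for all maximal $M < G$, a task complicated by the heterogeneous nature of maximal subgroups (parabolic, subfield, almost simple of smaller rank, and so on), each of which must be handled on compatible footing.
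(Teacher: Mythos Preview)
Your outline for parts (i)--(iv) is essentially the paper's argument. The miracle-flatness reduction you describe is packaged as Proposition~\ref{flat}; your part (iii) via the centre of $\GL_n$ matches the paper's use of the quasi-finite morphisms $\SL_n\times\GL_1\to\GL_n\to\PGL_n\times\GL_1$; and your Hall-recursion argument for (iv) is the paper's, with the parity bookkeeping handled by Proposition~\ref{waring-alt} for $\SSS_n$ rather than by invoking Theorem~\ref{main-inf} on $\AAA_n$ directly.

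For (v) and (vi) you correctly locate the difficulties, but your proposed resolutions diverge from the paper in ways that matter. In (vi) you assert a uniform bound $|\Hom(\Gamma,M)|\le c(w)\,|M|^{d-1}$ for \emph{all} maximal $M<G$ and then sum $[G:M]^{-(d-1)}$. Such a bound is not available in general (nothing like Theorem~\ref{main-inf} holds for arbitrary $M$), and the paper does not attempt it. Instead the paper splits: for $M$ with $[G:M]>|G|^{\epsilon}$ it uses only the trivial bound $|\Hom(\Gamma,M)|\le |M|^d$ together with $\sum_M [G:M]^{-2}\to 0$ from \cite{LMS} (Lemma~\ref{small-max}); for the remaining large $M$ with $|M|>|G|^{1-\epsilon}$ in a classical $G$ it proves the sharper fiber estimate $|w_P^{-1}(1)|\le [G:P]^{-2}|G|^{d-1}$ (Proposition~\ref{classical-target}), by first exhibiting a classical quotient $H$ of $P$ with $|P|/|H|<[G:P]^3$ (Proposition~\ref{parabolic}) and then running the character-sum argument of Theorem~\ref{main-inf} in $H$. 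Bounded-rank Lie type and alternating groups are disposed of separately (via the Kleidman--Liebeck tables and Bochert, respectively). Your ``enlarging $N(l)$'' does help, but only through this split mechanism, not through a uniform $|M|^{d-1}$ bound.

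For (v), ``Theorem~\ref{main-inf} on each alternating factor plus Stirling'' is not enough to control $|\Hom(\Gamma,\SSS_{n/k}\wr\SSS_k)|$: the wreath-product characters and the fixed-point probabilities require separate treatment. The paper supplies both ingredients---a character bound for $\SSS_{n/k}\wr\SSS_k$ (Lemma~\ref{wr}) and a fixed-point estimate for $\epsilon$-weakly transitive groups (Proposition~\ref{nearly-trans})---and then reruns the Proposition~\ref{waring-alt} argument inside the wreath product to get $|\Hom(\Gamma,\SSS_{n/k}\wr\SSS_k)|=(1+o(1))|\SSS_{n/k}\wr\SSS_k|^{d-1}$ for $k<n^{\epsilon}/2$; larger $k$ are absorbed by Lemma~\ref{small-max}.
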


Note that for statements (i)--(iv) of Theorem \ref{applications} to hold, it suffices to take $N^*(l) = N(l) = 2 \cdot 10^{18}l^4$ as
in Theorem \ref{main-inf}.

Some special cases of Theorem \ref{applications}, where $w_i$ are commutators or squares, were
already obtained in the past.

For example, in the case of surface words
$$w = x_1^{-1}x_2^{-1}x_1x_2\cdots  x_{2g-1}^{-1}x_{2g}^{-1}x_{2g-1}x_{2g}$$
for $g\ge 2$, part (i) of Theorem \ref{applications} was obtained in \cite[4.4]{AA}.  In characteristic zero it is also
shown in \cite[VIII]{AA} that, for $g\ge 374$, the fibers of $w_{\uG}$ have rational singularities.  It would be interesting to know whether the statement about rational singularities holds in the generality of part (i) of Theorem \ref{applications}, if $N$ is sufficiently large in terms of $l$.

Part (ii) of Theorem \ref{applications} for surface words (including non-oriented ones $w = x_1^2 \cdots  x_g^2$ where
$g \ge 3$) was obtained in \cite[1.11]{LiSh3}.

Part (iii) of Theorem \ref{applications} for (oriented and non-oriented) surface words was obtained in
\cite{RBK} and \cite{BK} for fields of characteristic zero (see also \cite{Go}), and in \cite[1.8]{LiSh3} for arbitrary fields.

Parts (iv) and (v) for surface groups were obtained in \cite{MP}.

For Fuchsian groups of genus $g \ge 2$ ($g \ge 3$ in the non-oriented case), a result similar to part (vi) of Theorem \ref{applications}
was obtained in Theorem 1.6 of \cite{LiSh3}.

We conclude the introduction with a result of independent interest, which plays an important
role in this paper and might be useful for other purposes.

\begin{thm}\label{epsilon} Let $w \in F_d$ be a non-trivial word, and let $G$ be a finite simple group.
Choose $g_1, \ldots, g_d \in G$ uniformly and independently. Then, for every $\e > 0$, the probability that
\[
|\chi(w(g_1, \ldots , g_d))| \le \chi(1)^{\e} \;\; {\rm for} \; {\rm all} \;\; \chi \in \Irr(G)
\]
tends to $1$ as $|G| \go \infty$.
\end{thm}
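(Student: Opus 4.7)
The plan is to combine sharp character bounds, which control $|\chi(g)|$ for ``generic'' elements $g \in G$, with a probabilistic argument showing that the word value $w(g_1,\ldots,g_d)$ lies in the generic locus with probability tending to $1$. Set
\[
E_\epsilon(G) := \{g \in G \colon |\chi(g)| > \chi(1)^{\epsilon} \text{ for some } \chi \in \Irr(G)\}.
\]
The theorem then amounts to the two assertions: (I) $|E_\epsilon(G)|/|G| \to 0$ as $|G| \to \infty$; and (II) for each fixed non-trivial $w$, $\Pr[w(g_1,\ldots,g_d) \in E_\epsilon(G)] \to 0$.

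For (I) I appeal to the character estimates of \cite{GLT2, GLT3} quoted in the introduction. For a finite simple group $G$ of Lie type these bounds give $|\chi(g)| \le \chi(1)^{\alpha(g)+o(1)}$, where $\alpha(g)$ tends to $0$ as $g$ becomes more regular (for instance, as $g$ ranges over regular semisimple elements with small centralizer). Consequently $E_\epsilon(G)$ is contained in the union of the non-regular-semisimple locus and finitely many small conjugacy classes, so its density is $O(1/q)$ for $G$ defined over $\F_q$. For $G = \AAA_n$ the corresponding classical character bounds at permutations of sufficiently generic cycle type give $|E_\epsilon(\AAA_n)|/n! \to 0$.

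For (II) I proceed family by family. For Lie type of bounded rank, $E_\epsilon(G)$ sits inside the $\F_q$-points of a proper Zariski-closed subvariety $Z \subset \uG$; since $w \ne 1$, the word morphism $w \colon \uG^d \to \uG$ is dominant (a theorem of Borel), so $w^{-1}(Z) \subsetneq \uG^d$ has positive codimension, and the Lang--Weil estimate yields $|w^{-1}(Z)(\F_q)|/|\uG(\F_q)|^d = O(1/q) \to 0$. For Lie type of unbounded rank and for alternating groups no such geometric shortcut is available; instead I bound the probability that $w(g_1,\ldots,g_d)$ lies in the union of small conjugacy classes (respectively, of atypical cycle type) directly, via the character-theoretic and combinatorial machinery developed elsewhere in the paper for its main uniformity results. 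Usefully, (I) leaves substantial slack: it suffices to land in the good set with probability $1 - o(1)$, rather than to quantitatively equidistribute on its complement.

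The main obstacle will be (II) in the unbounded-rank and alternating cases, especially when $w$ is a single-variable power word such as $w = x^k$: then $w(g) = g^k$ inherits its cycle or Jordan structure from $g$, so the induced distribution is strongly coupled to that of a single random argument and may be far from uniform. Here I would have to proceed by an explicit analysis of the cycle or Jordan-type distribution of $g^k$, combined with the $L^p$-type character bounds of \cite{GLT2, GLT3} and standard Witten zeta convergence arguments in the Liebeck--Shalev framework.
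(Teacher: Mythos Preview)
Your high-level strategy matches the paper's: show the word value lands in a ``good'' set (where the character bound holds) with probability tending to $1$. The bounded-rank Lie-type case via Borel's dominance theorem and Lang--Weil is exactly the paper's argument (Lemma~\ref{ss}), and the alternating case is correctly deferred to \cite[Theorem~7.4]{LS1}. A minor structural point: your decomposition into (I) and (II) is redundant --- (II) \emph{is} the theorem, and (I) is merely its special case $w=x_1$ (which is \cite[Proposition~4.2]{LS4}); knowing $E_\epsilon(G)$ is small does not help control $\Pr[w(\cdot)\in E_\epsilon(G)]$ for general $w$, as you yourself observe for power words.

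The genuine gap is the unbounded-rank classical case. You say you would bound the probability ``directly, via the character-theoretic and combinatorial machinery developed elsewhere in the paper,'' but there is no such pre-existing machinery: the required tool (Theorem~\ref{small-centralizer}) is developed precisely for this purpose and constitutes the technical heart of Section~5. The key idea, absent from your proposal, is a two-step reduction: first, a large centralizer of $g:=w(h_1,\ldots,h_d)$ in $\GL(V)$ forces the existence of a polynomial $Q$ of controlled degree with $\dim\ker Q(g)$ large relative to $\deg Q$ (Lemma~\ref{centralizer}); second, the probability of this event is bounded by a direct combinatorial argument (Proposition~\ref{big-kernel}) that tracks the images of random test vectors under the successive letters of $w$, using Witt's extension theorem to control conditional probabilities step by step. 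This argument is uniform in $w$, depending only on its length $l$, and does not proceed via a case analysis of the Jordan or cycle structure of $g^k$ as you suggest. Without supplying something equivalent to Proposition~\ref{big-kernel}, the unbounded-rank case remains open in your proposal.
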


This result generalizes Proposition 4.2 of \cite{LS4} dealing with the case $w = x_1$,
and Theorem 7.4 of \cite{LS1} dealing with alternating groups.

The rest of the paper is organized as follows. In Section 2 we use methods from
algebraic geometry to prove Theorem \ref{irred} and deduce Theorem \ref{prob-waring}
for Lie type groups of bounded rank.
In Section 3 we discuss random walks and prove Theorem \ref{positive}.
In Section 4 we use character methods to provide an alternative proof of Theorem \ref{prob-waring}
in bounded rank (as well as some stronger results for $\PSL_2(q)$). In Section 5 we discuss classical groups
of large rank, and apply new character bounds obtained for them, and other tools, to complete
the proof of Theorem \ref{prob-waring}. Theorem \ref{epsilon} is also proved in Section 5, and
plays a key role in proving Theorem \ref{prob-waring}.  The proof of Theorem~\ref{main-inf}
is given in Section 6, and Section 7 is devoted to the proof of Theorem \ref{applications}.

\section{Geometric methods}

In this section we prove Theorem \ref{irred} and deduce Theorem \ref{prob-waring}
for Lie type groups of bounded rank.  At the end of the section, we prove a result which
will be needed below for Theorem~\ref{applications}.  Note that by an $\F_q$-\emph{variety},
we mean a separated, geometrically integral scheme of finite type over $\F_q$.

\begin{propo}
\label{23}
Let $\uX$ be an $\F_q$-variety, $\uY$ a disjoint union of $\F_q$-varieties $\uY_i$ of equal dimension,
and $f\colon \uY\to \uX$ a morphism defined over $\F_q$.
If $\uY$ is irreducible, then
\begin{equation}
\label{mostly-onto}
\lim_{n\to \infty} \frac{|f(\uY(\F_{q^n}))|}{|\uX(\F_{q^n})|} = 1
\end{equation}
if and only if $f$ is dominant and its generic fiber is geometrically irreducible.
In general,
\begin{equation}
\label{nearly-equi}
\lim_{n\to \infty} \Vert f_* U_{\uY(\F_{q^n})} - U_{\uX(\F_{q^n})}\Vert_{L^1} = 0,
\end{equation}
implies each restriction $f_i$ of $f$ to a component $\uY_i$ of $\uY$ is dominant and the generic fiber of each $f_i$ is geometrically irreducible.
\end{propo}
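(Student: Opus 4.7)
I would prove the biconditional in the irreducible case first, separating the two directions, and then deduce the implication in the general case by combining the Stein factorization with Chebotarev/Deligne equidistribution of Frobenius conjugacy classes. For the ``if'' direction of the irreducible case, assume $f$ is dominant with geometrically irreducible generic fiber. Spreading out yields a dense open $\uU \subseteq \uX$ over which every geometric fiber of $f$ is geometrically irreducible of dimension $\dim\uY - \dim\uX$; applying the Lang-Weil estimate uniformly in this family gives $|f^{-1}(x)(\F_{q^n})| = q^{n(\dim\uY-\dim\uX)}(1+O(q^{-n/2}))$ for all $x \in \uU(\F_{q^n})$, and (\ref{mostly-onto}) follows since $\uX \setminus \uU$ has lower dimension.

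For the ``only if'' direction, suppose (\ref{mostly-onto}). Dominance is forced, else $\overline{f(\uY)}$ would have strictly lower dimension and yield $|f(\uY(\F_{q^n}))| = o(|\uX(\F_{q^n})|)$. For geometric irreducibility of the generic fiber, take the Stein factorization $f = h \circ g$ with $g\colon \uY \to \uZ$ having geometrically connected fibers and $h\colon \uZ \to \uX$ finite of degree $d$ equal to the number of geometric components of the generic fiber of $f$. If $d \geq 2$, realize $\uZ$ via a subgroup $H$ of the Galois group $G$ of the Galois closure of $h$; Chebotarev/Deligne equidistribution then gives $|h(\uZ(\F_{q^n}))|/|\uX(\F_{q^n})| \to |\{g \in G : g \text{ fixes a point of } G/H\}|/|G|$, which by Jordan's theorem is strictly less than $1$. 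Since $f(\uY(\F_{q^n})) \subseteq h(\uZ(\F_{q^n}))$, this contradicts (\ref{mostly-onto}).

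For the general implication, the equal-dimension hypothesis and Lang-Weil show that, possibly along an arithmetic progression of $n$, each weight $\alpha_i := |\uY_i(\F_{q^n})|/|\uY(\F_{q^n})|$ is bounded below by some $\alpha_i^* > 0$. If some $f_{i_0}$ were not dominant, then $f_*U_\uY$ would put mass $\geq \alpha_{i_0}^*$ on a set of vanishing uniform measure, giving $\|f_*U_\uY - U_\uX\|_{L^1} \geq 2\alpha_{i_0}^* - o(1)$, contradicting (\ref{nearly-equi}). For geometric irreducibility of each $f_i$'s generic fiber, form the Stein factorizations $\uY_i \to \uZ_i \to \uX$ and let $G$ be the Galois group of the Galois closure of the compositum of the covers $\uZ_i/\uX$, with subgroups $H_i$ of indices $d_i$. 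Combining fiberwise Lang-Weil on each $g_i$ with Deligne equidistribution yields, for most $x \in \uX(\F_{q^n})$,
\[
|\uX(\F_{q^n})| \cdot f_*U_\uY(x) = \sum_i \alpha_i \psi_i(\Frob_x)/d_i + o(1),
\]
where $\psi_i$ is the permutation character of $G$ on $G/H_i$. The hypothesis forces $\sum_i \alpha_i^* \psi_i(g)/d_i = 1$ for every $g \in G$; since $\psi_i(g)/d_i \leq 1$ with equality iff $g$ acts trivially on $G/H_i$, and since $\sum_i \alpha_i^* = 1$ with every $\alpha_i^* > 0$, every $H_i$ must equal $G$, so each $d_i = 1$.

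The hard part will be this last step: extracting component-wise structural information from a single $L^1$ bound requires combining uniform Lang-Weil estimates on families of fibers with Deligne-style equidistribution on the compositum of Stein covers, while tracking potential cancellations among the pushforward contributions $(f_i)_*U_{\uY_i}$. The remaining steps are essentially direct.
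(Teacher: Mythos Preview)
Your strategy coincides with the paper's: both directions in the irreducible case via spreading out plus uniform Lang--Weil for the ``if'' part, and Stein factorization plus Chebotarev for the ``only if'' part; the general case by Stein-factoring each $f_i$, passing to the Galois closure of the compositum, and reading off fiber sizes via permutation characters. Your use of Jordan's theorem to produce Frobenius classes with empty fiber is a clean variant of the paper's argument, which instead exploits the totally split locus and a pigeonhole count.

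There is, however, a slip in your displayed formula for the general case. Since each $\uY_i$ is an $\F_q$-variety and $L_i \subseteq K_{\uY_i}$, the intermediate cover $\uZ_i$ is itself geometrically irreducible, so $|\uZ_i(\F_{q^n})| \sim |\uX(\F_{q^n})|$ and hence $|\uY_i(\F_{q^n})| \sim q^{ne}|\uX(\F_{q^n})|$, \emph{independent of $d_i$}. Consequently $\alpha_i \to 1/r$ and
\[
|\uX(\F_{q^n})|\cdot f_*U_{\uY}(x) \;=\; \sum_i \alpha_i\,\psi_i(\Frob_x) + o(1),
\]
with no division by $d_i$. Your subsequent convexity argument is tailored to the erroneous formula; with the correct one the conclusion is immediate by evaluating at $g=1$ (the totally split locus, which has positive density by Chebotarev): one obtains $\sum_i d_i = r$, forcing every $d_i=1$. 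This is exactly how the paper finishes.
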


\begin{proof}

Let us first assume $\uY$ is irreducible.
By the Lang-Weil estimate, we may replace $\uX$, $\uY$, and $f$  by $\uX'$, $\uY'$, and $f' = f|_{\uY'}$ respectively, for any open subvariety $\uX'$ of $\uX$
and any open subvariety $\uY'$ of $f^{-1}(\uX')$.
Thus, we are justified in assuming $\uX$ and $\uY$ are affine and non-singular, and $f$ is dominant.  We denote their coordinate rings $A$ and $A_Y$ respectively.
As $\uX$ and $\uY$ are varieties, these are integral domains.
Let $K$ and $K_Y$ denote the fraction fields of $A$ and $A_Y$
respectively, and let $L$ denote the separable closure $L$ of $K$ in $K_Y$.  As $K_Y$ is a finitely generated field,
L is a finite extension of $K$.  Our claim is that $L=K$ if and only if (\ref{mostly-onto}) holds.

Choose $\alpha\in L\cap  A_Y$ to be a primitive element of $L/K$; after multiplying by a suitable element of $A$,
we may assume it is also integral over $A$.  Let $B=A[\alpha]\subset A_Y$, so $f$ factors through the finite morphism $\Spec B\to \Spec A$.
By \cite[Th\'eor\`eme~17.6.1]{EGA44}, $\Spec B\to \Spec A$ is \'etale in a neighborhood of the generic point of $\Spec B$, so
replacing $A$ by $A[1/a]$ for $\Spec A[1/a]$ small enough and $B$ and $A_Y$ by $B[1/a]$ and $A_Y[1/a]$ respectively,
we may assume $\Spec B\to \Spec A$ is
finite \'etale.  In particular $\Spec B$ is non-singular \cite[Th\'eor\`eme~17.11.1]{EGA44}.  Both $A$ and $B$ are therefore integrally closed, and $B$ is module-finite over $A$ and hence integral.
Thus $B$ is the integral closure of $A$ in $L$.

Let $M$ denote any finite extension of $L$ which is Galois over $K$ and $C$ the integral closure of $B$ in $M$.  Thus $C^{\Gal(M/K)}$ contains $A$ and has fraction field $K$.
It is contained in $K$ and integral over $A$, therefore equal to $A$.  Thus $\uX = \Spec A$ is the quotient of $\uZ=\Spec C$ by $\Gal(M/K)$.
Likewise, $B = C^{\Gal(M/L)}$, so $\uZ\to \uX$
factors through $\uY = \Spec B$.  Let $m$ denote the common dimension of $\uX$, $\uY$, and $\uZ$.
By the Lang-Weil estimate, $|\uX(\F_{q^n})|$, $|\uY(\F_{q^n})|$, and $|\uZ(\F_{q^n})|$ are all $(1+O(q^{-n/2}))q^{mn}$.

Applying the Chebotarev density theorem for $\uZ\to \uX$ \cite{Serre}, we see that in the limit $n\to \infty$, a positive proportion of
points in $\uX(\F_{q^n})$ split completely in $\uZ$ and therefore in $\uY$.
It follows that (\ref{mostly-onto}) implies $L=K$.

Conversely the condition $K=L$ is equivalent to the generic geometric irreducibility of $f$.
By \cite[Proposition~9.5.5,~Th\'eor\`eme~9.7.7]{EGA43}, we may assume without loss of generality
that all fibers of $f$ are geometrically irreducible and of equal dimension.
It is well known that the Lang-Weil theorem holds uniformly for families of varieties of the same dimension (see, e.g., \cite[Lemma 2.2]{LS2}),
and this implies (\ref{mostly-onto}) and even the stronger (\ref{nearly-equi}).

Finally, we consider the case that $\uY$ has irreducible components $\uY_1,\ldots,\uY_r$.
We note first that Lang-Weil implies that as $n\to \infty$, the probability of a random element of $\uY(\F_{q^n})$ lying in any fixed
$\uY_i(\F_{q^n})$ approaches $1/r$, so (\ref{nearly-equi}) implies that the restriction of $f$ to each $\uY_i$ is dominant.
Proceeding as before, we may assume that $\uX = \Spec A$ is affine, each $\uY_i$ is affine and geometrically connected over $\Spec B_i$,
$\Spec B_i$ is finite \'etale over $\uX$,  the fraction field $L_i$ of $B_i$ is a finite separable extension of the fraction field $K$ of $A$, and $M_i$
is a finite Galois extension of $K$ containing $L_i$.
Let $e = \dim \uY_i - \dim \uX$, the relative dimension of $\uY_i$ over $\uX$, which is the same for all $i$ since the $\uY_i$ have the same dimension and the morphisms to $\uX$ are all dominant.
By the uniform version of the Lang-Weil theorem,
for each $\F_{q^n}$-point of $\Spec B_i$, there are $(1+o(1))q^{ne}$ elements of $\uY_i(\F_{q^n})$ lying over it.

Applying the Chebotarev density theorem for $M_1\cdots M_r/K$, in the limit as $n\to \infty$, a positive proportion of points  $x\in \uX(\F_{q^n})$
split completely in each $L_i$, which means that there are $[L_i:K]$ $\F_{q^n}$-points of $\Spec B_i$ lying over $x$, therefore
$(1+o(1))[L_i:K]q^{ne}$ points of $\uY_i(\F_{q^n})$ lying over $x$, and, finally, $(1+o(1))([L_1:K]+\cdots+[L_r:K])q^{en}$
points of $\uY(\F_{q^n})$ lying over $x$.  If any of $L_1,\ldots,L_r$ is of degree $\ge 2$ over $K$, then this sum of degrees strictly exceeds $r$.
On the other hand, Lang-Weil implies
$$\lim_{n\to \infty} \frac{|\uY(\F_{q^n})|}{q^{en}|\uX(\F_{q^n})|} = 1.$$
Thus, $\Vert f_* U_{\uY(\F_{q^n})} - U_{\uX(\F_{q^n})}\Vert_{L^1}$ does not approach $0$.

\end{proof}

We now embark on the proof of Theorem \ref{irred}.

\begin{proof}
If $G$ is any finite group and $H$ is contained in its center, then for all $g\in G$,
$$[G:H]^d|w_{G/H}^{-1}(gH)| =  \sum_{h\in H} |w_G^{-1}(gh)|.$$
Defining $f\colon G^d\times H\to G$ by $f(g_1,\ldots,g_d,h) = w_G(g_1,\ldots,g_d)h$, we have
\begin{equation}
\label{translates}
\Vert f_* U_{G^d\times H} - U_G\Vert_{L^1} = \Vert p_{w,G/H} - U_{G/H}\Vert_{L^1}.
\end{equation}
On the other hand, the triangle inequality implies
\begin{equation}
\label{triangle}
\Vert p_{w,G/H}-U_{G/H} \Vert_{L^1} \leq \Vert p_{w,G}-U_G \Vert_{L^1}.
\end{equation}
We specialize to the case that $H$ is the center of $G$, while $G$ is
of the form $\uG(\bar\F_p)^F$, where $F$ is a generalized Frobenius map and $\uG$ is a  simply connected, split, simple algebraic
group of rank $\le r$ over $\F_p$.

To prove (i) implies (ii), given $p$ and $\uG$, we choose $q$ so that the center $Z$ of $\uG(\bar\F_p)$ is contained in $\uG(\F_q)$.
Applying Proposition~\ref{23} to the morphism $\uG^d\times Z\to \uG$ given by $(g_1,\ldots,g_d,z)\mapsto w(g_1,\ldots,g_d)z$,
condition (i) in the form given by (\ref{translates}) implies that each component of $\uG^d\times Z$ maps to $\uG$ with geometrically
irreducible generic fiber.  In particular this is true for the identity component, which is $\uG^d$.  A second application of Proposition~\ref{23}
gives (ii).

To prove (ii) implies (iii), we first note that generic geometric irreducibility is stable under base change of $\Bbbk$, so
we could assume without loss of generality that
$\Bbbk$ is algebraically closed and therefore that $\uG$ is split.
Since every split group is obtained by base change from a split group over a prime field,
we assume instead that $\uG$ is split and that $\Bbbk$ is either $\Q$ or $\F_p$ for some $p$.
Let $\cG$ denote a split semisimple group scheme over $\Z$ with
the same root system as $\uG$, and we denote  by $w_{\cG}$ the word morphism
$\cG^d\to \cG$ of schemes of finite type over $\Spec \Z$.
By \cite[Th\'eor\`eme~9.7.7]{EGA43}, the set of points of $\cG$ over which $w_{\cG}$
is geometrically irreducible is constructible and contains the generic point.  It therefore
contains a non-empty open set $S$.  By Chevalley's constructibility theorem \cite[Corollaire~1.8.5]{EGA41},
its image in $\Spec \Z$ is constructible and therefore contains all but finitely many closed points.
Thus $S$ contains
the generic point of all but finitely many fibers of $\cG\to \Spec\Z$, so
it suffices to prove the geometric irreducibility in the case that $k=\F_p$ and $\uG$ is split.
This case follows from Proposition~\ref{23}.

It remains to show that (iii) implies (i); by (\ref{triangle}), it suffices to prove
$$\lim_{|G|\to \infty} \Vert p_{w,G} - U_G \Vert_{L^1}= 0,$$
where $G$ ranges over groups
of the form $\uG_0(\bar\F_p)^F$, where $F$ is a generalized Frobenius map and $\uG_0$ is a  simply connected, split, simple algebraic
group of rank $\le r$ over $\F_p$.
We fix any root system $\Phi$ of rank $\leq r$ and prove the limit is zero as $G$ ranges over groups of this form with root system $\Phi$.
In the case that $F$ is a standard Frobenius map, $\uG_0(\bar\F_p)^F = \uG(\F_q)$
for some simply connected $\uG$ of rank $\le r$ and some $q$.  Thus, (i) follows from Proposition~\ref{23}.
In the case of Suzuki or Ree groups, it follows from the following lemma.
\end{proof}

\begin{lem}

Let $\uG$ be a split simple algebraic group over $\F_p$ and $f\colon \uG^d\to \uG$
a morphism of schemes.  There exists a constant $C$ such that if
$\bar x\in \uG(\bar\F_p)$ is a geometric point of $\uG$ such that $w^{-1}(\bar x)$ is irreducible of dimension $k$, and $F\colon
\uG_{\bar \F_p}\to \uG_{\bar \F_p}$ an endomorphism which preserves $w^{-1}(\bar x)$ and
such that $F^2$ is a standard $	p$-Frobenius endomorphism, and $s$ is a sufficiently large integer, then
$$\bigm|\bigm| w^{-1}(\bar x) (\bar \F_p)^{F^{2s+1}}\bigm| - p^{(2s+1)k/2}\bigm| \leq C p^{(2s+1)k/2-1/4}.$$
\end{lem}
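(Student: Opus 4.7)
The plan is to apply the Grothendieck--Lefschetz trace formula for compactly supported $\ell$-adic cohomology and bound the eigenvalues of $F$ via Deligne's weight theorem, viewing $F$ as a ``square root'' of the geometric $p$-Frobenius.

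Set $Y := w^{-1}(\bar x)$. Since $F$ preserves $Y$, so does $F^2$; as $F^2$ is the standard $p$-Frobenius of $\uG_{\bar\F_p}$, the subvariety $Y$ descends to an irreducible $\F_p$-scheme of dimension $k$ whose geometric Frobenius equals $F^2$. Fix a prime $\ell \neq p$. The Lefschetz trace formula for $\ell$-adic cohomology with compact supports gives
$$|Y^{F^{2s+1}}| = \sum_{i=0}^{2k} (-1)^i \tr\bigl(F^{2s+1} \mid H^i_c(Y_{\bar\F_p}, \Q_\ell)\bigr).$$
By Deligne's theorem the eigenvalues of $F^2$ on $H^i_c$ have complex absolute value at most $p^{i/2}$, so the eigenvalues $\mu$ of $F$ itself satisfy $|\mu| \le p^{i/4}$. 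Because $Y$ is geometrically irreducible of dimension $k$, the top cohomology $H^{2k}_c(Y_{\bar\F_p}, \Q_\ell)$ is one-dimensional with $F^2$ acting by $p^k$, hence $F$ acts on it as $\e p^{k/2}$ for some sign $\e \in \{\pm 1\}$. The top-degree contribution to the trace formula is therefore $\e p^{(2s+1)k/2}$, while the sum of the remaining terms in absolute value is at most
$$\Bigl(\sum_{i<2k}\dim H^i_c(Y_{\bar\F_p}, \Q_\ell)\Bigr) \cdot p^{(2s+1)(2k-1)/4} \;\le\; C_1 \cdot p^{(2s+1)k/2 - 1/4},$$
using $s\ge 0$ in the last inequality. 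If $\e$ were $-1$, then for $s$ large the right-hand side of the trace formula would be negative, contradicting non-negativity of $|Y^{F^{2s+1}}|$; hence $\e = +1$ once $s$ is sufficiently large, and the desired estimate follows.

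To make the constant independent of $\bar x$ and $F$, I would note that the Betti numbers $\dim H^i_c(w^{-1}(\bar x)_{\bar\F_p}, \Q_\ell)$ depend only on the underlying $\bar\F_p$-scheme $w^{-1}(\bar x)$, and the function $\bar x \mapsto \sum_i \dim H^i_c(w^{-1}(\bar x)_{\bar\F_p}, \Q_\ell)$ is constructible on $\uG$ (obtained by stratifying $\uG$ so that the family of fibers becomes cohomologically locally constant on each stratum); it therefore takes only finitely many values, and we take $C$ to be their maximum. The main subtlety is the sign determination, which is precisely the point where the ``sufficiently large $s$'' hypothesis enters; everything else is a direct application of the Weil conjectures and standard constructibility of Betti numbers in families.
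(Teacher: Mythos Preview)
Your argument follows the same outline as the paper's, but there is a genuine gap at the very first step: you invoke ``the Lefschetz trace formula for $\ell$-adic cohomology with compact supports'' for the endomorphism $F^{2s+1}$ without justification. The Grothendieck--Lefschetz trace formula in its standard form applies to the geometric Frobenius of a variety over a finite field. Here $F$ is \emph{not} a Frobenius: only $F^2$ is. There is no field $\F_{\sqrt{p}}$ over which $Y$ is defined with $F$ as its Frobenius, so you cannot simply say ``$Y$ descends to $\F_p$'' and then apply the usual formula to the non-Frobenius map $F^{2s+1}$. For a non-proper variety such as $Y$ (a closed subscheme of the affine variety $\uG^d$), the naive fixed-point formula for an arbitrary finite self-map on compactly supported cohomology can fail; what is needed is precisely Deligne's conjecture, proved by Fujiwara, which guarantees that the naive formula holds for a correspondence twisted by a sufficiently high power of Frobenius. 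Writing $F^{2s+1} = F\circ(\mathrm{Frob}_p)^s$, Fujiwara's theorem applies once $s$ is large enough, and this is the reference the paper invokes.

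So the ``sufficiently large $s$'' hypothesis enters in two places, not one: first to make the trace formula valid (Fujiwara), and second to pin down the sign $\varepsilon$ on $H^{2k}_c$. Your treatment of the sign, of the eigenvalue bounds via Deligne applied to $F^2$, and of the uniform boundedness of Betti numbers by constructibility is fine and matches the paper; the missing ingredient is the citation of Fujiwara (or an equivalent result) to justify the displayed identity for $|Y^{F^{2s+1}}|$.
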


\begin{proof}
We  fix $\ell\neq p$.
By the finiteness and proper base change theorems for \'etale cohomology over a field
we see that
for all $i$, $\dim H^i_c(w^{-1}(\bar x),\Q_\ell)$ is bounded as $\bar x$ varies.

We would like to apply the Lefschetz trace formula to count the $F$-fixed points
of $w^{-1}(\bar x)$.  We use Fujiwara's theorem (formerly Deligne's conjecture) \cite{Fu}.
If $F$ is an endomorphism of $\uG$ whose square is the $p$-Frobenius, then the
naive Lefschetz trace formula applies to all sufficiently high odd powers of $F$:
\begin{equation}
\label{Var}
|w^{-1}(\bar x) \cap (\uG(\bar \F_p)^{F^{2s+1}})^d|
= \sum_{i=0}^{2k} (-1)^i\tr(F^{2s+1}\vert H^i_c(w^{-1}(\bar x),\Q_\ell)).
\end{equation}
Since $F^2$ is a standard $p$-Frobenius map, by \cite[3.3.1]{D2} the eigenvalues
of $F$ on $H^i_c(w^{-1}(\bar x),\Q_\ell)$ have absolute value at most $p^{i/4}\leq \sqrt p^{\dim w^{-1}(\bar x)}$.  As $w^{-1}(\bar x)$ is a variety, its top cohomology group, $H^{2k}(w^{-1}(\bar X),\Q_\ell)$, is $1$-dimensional,
and $F^2$ acts with eigenvalue $p^k$.  Thus $F$ acts on the top cohomology with eigenvalue $\pm p^{k/2}$, and as left hand side of (\ref{Var}) is non-negative, for $f$ sufficiently large,
the eigenvalue is $p^{k/2}$, and the number of $F^{2s+1}$-fixed points differs from $p^{(2s+1)k/2}$ by $O(q^{(2s+1)k/2-1/4})$.
The lemma follows.
\end{proof}

An immediate consequence of Theorem~\ref{irred} is the following.

\begin{corol}
\label{abelian}
If the image of $w\in F_d$ in the abelianization $\Z^d = F_d/[F_d,F_d]$ is primitive, then $w$ is almost uniform in bounded rank.
\end{corol}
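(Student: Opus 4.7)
The plan is to verify condition (ii) of Theorem~\ref{irred} and then invoke the equivalence (ii)$\Leftrightarrow$(i). The hypothesis that the image of $w$ in $F_d/[F_d,F_d]=\Z^d$ is primitive says precisely that $w\in x_1^{e_1}\cdots x_d^{e_d}[F_d,F_d]$ with $\gcd(e_1,\ldots,e_d)=1$, which is exactly Segal's definition of a \emph{silly} word. As recalled in the discussion preceding this corollary, \cite[3.1.1]{Seg} asserts that silly words are surjective on every group.

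To check (ii), fix a prime $p$ and a split simply connected semisimple group $\uG$ over $\F_p$ of rank $\leq r$, and take (say) $q=p$; any power of $p$ would do. For every $n\geq 1$, the group $\uG(\F_{q^n})$ is finite, so Segal's result gives $w\bigl(\uG(\F_{q^n})^d\bigr)=\uG(\F_{q^n})$. Therefore the ratio $|w(\uG(\F_{q^n}))|/|\uG(\F_{q^n})|$ equals $1$ for every $n$, and its limit is trivially $1$. Since $r$ was arbitrary, the equivalence in Theorem~\ref{irred} yields that $w$ is almost uniform in rank $\leq r$ for every $r$, i.e.\ almost uniform in bounded rank.

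There is essentially no obstacle: all of the work is concentrated in Theorem~\ref{irred}, and the primitivity hypothesis is arranged so as to meet the \emph{weakest} of its three equivalent conditions. One could instead attempt to verify (iii) directly, by using the surjectivity $\mathrm{Aut}(F_d)\twoheadrightarrow \mathrm{GL}_d(\Z)$ (Nielsen) to reduce to the case $w=x_1\cdot c$ with $c\in[F_d,F_d]$ and then arguing that the generic fiber of $(g_1,\ldots,g_d)\mapsto g_1 c(g_1,\ldots,g_d)$ is geometrically irreducible; but this would demand an actual geometric computation, whereas condition (ii) sidesteps it entirely by appealing to Segal's elementary observation on the surjectivity of silly words.
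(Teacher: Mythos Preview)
Your proof is correct and follows essentially the same approach as the paper: both use Segal's observation that a word with primitive abelianization is surjective on all groups, and then feed this into Theorem~\ref{irred} (the paper leaves implicit that surjectivity on all $\uG(\F_{q^n})$ verifies condition~(ii), whereas you spell this out). Your closing paragraph about an alternative route via condition~(iii) is additional commentary not present in the paper.
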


\begin{proof}
It suffices to prove that $w(G) = G$ for all groups $G$.  Indeed, as shown in \cite[3.1.1]{Seg}, if the image of $w$ in $\Z^d$ is a primitive $d$-tuple $(a_1,\ldots,a_d)$, we fix $b_1,\ldots,b_d\in\Z$ such that $\sum_i a_i b_i = 1$.
Then $w(g^{b_1},\ldots,g^{b_d}) = g$.
\end{proof}

We can now deduce Theorem~\ref{prob-waring} for Lie type groups of bounded rank. It follows
immediately from Theorem~\ref{irred} together with the following lemma.

\begin{lem}
Let $w=w_1 w_2 \in F_d$ where $w_1, w_2 \neq 1$ are disjoint words, and let $\uG$ be a semisimple simply connected algebraic group.
Then $w\colon \uG^d\to \uG$ has geometrically irreducible generic fiber.
\end{lem}

\begin{proof}
It suffices to prove this in the case that $\uG$ is simple.  In this case,
$G = \uG(\F_q)$ is the universal central extension of a finite simple group if $q$ is sufficiently large.  By Borel's theorem, $w_1$ and $w_2$ define dominant morphisms, so if $q$ is sufficiently large, there exist regular semisimple conjugacy classes $C_1$ and $C_2$ of $G$ lying in the image of $w_1$ and $w_2$ respectively.
By \cite[Lemma 5.1]{GT},
the image of $w$ contains all non-central semisimple elements of $G$ when $q$ is large, so condition (ii) of Theorem~\ref{irred} is satisfied.
Hence condition (iii) follows, as required.
\end{proof}

We conclude with a result which will be needed in \S7.
 \begin{propo}
 \label{flat}
 Let $w\in F_d$ be a word such that, as $G$ ranges over all finite simple groups of Lie type, we have
 \begin{equation}
 \label{uniform}
 \lim_{|G| \to \infty} \Vert p_{w,G}- U_G \Vert_{L^\infty}= 0.
 \end{equation}
 Then for every field $\Bbbk$ and every semisimple algebraic group $\uG$ over $\Bbbk$, the word map $w_{\uG}\colon \uG^d\to \uG$
 associated to $w$ is a flat morphism.
 \end{propo}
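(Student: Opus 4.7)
My plan is to invoke \emph{miracle flatness}: since $\uG^d$ and $\uG$ are smooth, $w_{\uG}$ is flat if and only if every geometric fiber has dimension exactly $(d-1)\dim\uG$. I argue by contraposition: assuming some fiber of $w_{\uG}$ has dimension at least $(d-1)\dim\uG+1$, I will construct an infinite family of finite simple groups violating the $L^\infty$ hypothesis~\eqref{uniform}.

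Before attacking this I make standard reductions. Flatness descends under fpqc base change, so I may assume $k$ algebraically closed. Next, let $\pi\colon \tilde{\uG}\to \uG$ denote the simply connected cover: both $\pi$ and $\pi^d$ are faithfully flat (torsors under finite flat commutative group schemes), the square
\[
\begin{CD}
\tilde{\uG}^d @>w_{\tilde{\uG}}>> \tilde{\uG} \\
@V\pi^d VV @VV\pi V \\
\uG^d @>>w_{\uG}> \uG
\end{CD}
\]
commutes, and faithfully flat descent for flatness gives $w_{\uG}$ flat iff $w_{\tilde{\uG}}$ flat. Writing $\tilde{\uG}=\uG_1\times\cdots\times\uG_r$ as a product of simply connected almost simple factors, $w_{\tilde{\uG}}$ is the product $\prod_i w_{\uG_i}$, which is flat iff each $w_{\uG_i}$ is. Thus I may assume $\uG$ is simply connected almost simple over an algebraically closed $k$.

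Suppose $w_{\uG}^{-1}(g_0)$ has dimension $m\geq (d-1)\dim\uG+1$ at some geometric point. By upper semicontinuity of fiber dimension, the bad locus $Z\subseteq \uG$ is a closed, non-empty subset. I spread $\uG, w, Z$ out to a model over some $\F_q$ (in characteristic $0$, first spread over $\Spec\Z[1/N]$ for suitable $N$ and then reduce modulo a large prime, using semicontinuity in the family to keep the bad fiber dimension $\geq(d-1)\dim\uG+1$ on a typical closed fiber). Pick $g_0\in Z(\F_{q^{n_0}})$ and an irreducible component $Y$ of $w_{\uG}^{-1}(g_0)$ of dimension $m$ defined over some $\F_{q^N}$. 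Lang--Weil gives $|Y(\F_{q^{Nn}})|\geq c\,q^{Nnm}$ for large $n$. Put $G_n:=\uG(\F_{q^{Nn}})$ and $S_n:=G_n/Z(G_n)$; then $S_n$ is a non-abelian finite simple group for large $n$, with $|G_n|=(1+o(1))q^{Nn\dim\uG}$ and $|Z(G_n)|$ uniformly bounded. A direct averaging over the center yields
\[
p_{w,S_n}(\bar g_0)=\sum_{z\in Z(G_n)}p_{w,G_n}(g_0 z)\;\geq\; p_{w,G_n}(g_0)\;\geq\;\frac{|Y(\F_{q^{Nn}})|}{|G_n|^d}\;\geq\; c\,q^{Nn(1-\dim\uG)},
\]
so $|S_n|\,\max_{s}p_{w,S_n}(s)\gtrsim q^{Nn}\to\infty$, contradicting~\eqref{uniform}.

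The main obstacle I anticipate is the spreading-out step in characteristic $0$: one must fix a model of $(\uG,w,Z,g_0)$ over a localization of $\Z$ on which the semisimple group structure survives, and combine Chevalley constructibility with semicontinuity of fiber dimension to ensure that for infinitely many primes $p$ the reduction still contains a closed point with over-dimensional $w$-fiber. Once that is in place, the rest is routine Lang--Weil bookkeeping in the style of Proposition~\ref{23}.
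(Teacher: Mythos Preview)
Your proof is correct and follows essentially the same strategy as the paper: miracle flatness reduces the claim to equidimensionality of fibers, and a bad fiber is ruled out by spreading to a finite field and applying Lang--Weil to contradict the $L^\infty$ hypothesis. The only cosmetic differences are that the paper reduces to the \emph{adjoint} group (then lifts to the simply connected cover just for the point count) rather than to the simply connected group directly, and handles characteristic~$0$ by observing that the flat locus of the word map on the Chevalley group scheme over $\Spec\Z$ is open and contains all closed points, which is the dual formulation of your spreading-out argument.
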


\begin{proof}
As flatness is not affected by faithfully flat base change \cite[Cor.~2.2.11 (iii)]{EGA42},  we can proceed as in Proposition~\ref{23}, observing that it suffices to consider the case that $\Bbbk$ is prime and $\uG$ is split.  Suppose we can prove flatness for
$\Bbbk=\F_p$ for all $p$ and therefore for $\Bbbk$ any finite field.
Let $\cG$ denote the split semisimple group scheme  over $\Spec \Z$ with the same root datum as $\uG$,
and let $w_{\cG}$ denote the word map $\cG^d\to \cG$.  Every non-empty closed set of $\cG^d$ contains a closed point.
By \cite[Th\'eor\`eme~11.1.1]{EGA43}, the flat locus of $w_{\cG}$ is open, so if it contains all closed points of $\cG^d$, it must be all of $\cG^d$.
Thus, we assume that $\Bbbk=\F_p$.

 By ``miracle flatness'' \cite[Proposition~6.1.5]{EGA42}, it suffices to prove that every fiber of $w_{\uG}$ has dimension $(d-1)\dim \uG$, the inequality
 $$\dim w_{\uG}^{-1}(g) \ge (d-1)\dim \uG$$
 being automatic \cite[(5.5.2.1)]{EGA42}.  If there exists a point on $\uG$ over which the inequality is strict, then by Chevalley's semicontinuity theorem \cite[Th\'eor\`eme~13.1.3]{EGA43},
there exists a closed point $x$ with this property.  If $\uG^{\ad}$ denotes the adjoint quotient of $\uG$, then the image of $x$ in $\uG^{\ad}$ has the same property
 for the word map $w_{\uG^{\ad}}$.  Thus, we may assume $\uG$ is adjoint, and since it is also split, it suffices to consider the case that it  is absolutely simple.
 The closed point $x$ corresponds to a $\Gal(\F_q/\F_p)$-orbit of points of $\uG(\F_q)$ for some $q$, and we let $x_0$ denote a point in this orbit.  Thus,
 the fiber $F_{x_0}$ of $w_{\uG_{\F_q}}$ over $x_0$ has dimension at least $(d-1)\dim \uG+1$.

Replacing $\F_q$ by a finite extension field if necessary, we may assume that the fiber $F_{x_0}\subset \uG_{\F_q}^d$ has the property that
all of its irreducible components are geometrically irreducible.  We may further assume the same for the inverse image $F_{x_0}^{\sc}$ of $F_{x_0}$ in $(\uG^{\sc}_{\F_q})^d$:
$$\xymatrix{F_{x_0}^{\sc}\ar[r]\ar[d] & (\uG^{\sc})^d\ar[d] \\
F_{x_0}\ar[r]\ar[d] &\uG^d\ar[d]^{w_{\uG}} \\
\Spec \F_q\ar[r]^-{x_0}&\uG}$$

Let $G$ denote the derived group of $\uG(\F_q)$.  The image of $F_{x_0}^{\sc}(\F_q)$ in $F_{x_0}(\F_q)\subset \uG(\F_q)^d$ lies in
$$\im (\uG^{\sc}(\F_q)^d\to \uG(\F_q)^d)=G^d,$$
and by the Lang-Weil estimate, if $q$ is sufficiently large,
$$|F_{x_0}^{\sc}(\F_q)| > \frac{q^{1+(d-1)\dim \uG}}2,$$
so
$$|\im (F_{x_0}^{\sc}(\F_q)\to F_{x_0}(\F_q))| \ge \frac{q^{1+(d-1)\dim \uG}}{2|\ZB(\uG^{\sc}(\F_q))|^d}.$$
The denominator does not depend on $q$, so when $q$ is sufficiently large, this is inconsistent with (\ref{uniform}).
\end{proof}

%
%

\section{Random words}

This section is devoted to the proof of Theorem \ref{positive}.

For $n \ge 0$ and $d \ge 1$ let $\sfx_{n,d}$ denote the random variable associated with the standard random walk with $n$ steps in $\Z^d$.
We also set $\sfx_n = \sfx_{n,2}$.
Thus $\sfx_n$ is the probability distribution in $\Z^2$ corresponding to a random walk of length $n$ in which each step in the set $\{(\pm 1,0),(0,\pm1)\}$ has probability $1/4$.

\begin{lem}
\label{prec-ineq}
Let $(a,b), (a',b')\in \N^2$, with $a+b\equiv a'+b'\pmod 2$.  Then
$$\Pr[\sfx_n=(a,b)]\le \Pr[\sfx_n=(a',b')]$$
for all $n\in \N$
if either of the following conditions holds:
\begin{enumerate}
\item[{\rm (\ref{prec-ineq}.1)}] $a-a', b-b'\in \N$.
\item[{\rm (\ref{prec-ineq}.2)}] $a+b = a'+b'$, and $|a-b| \ge |a'-b'|$
\end{enumerate}
\end{lem}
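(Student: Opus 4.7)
The plan is to exploit the product structure of $\sfx_n$ in the rotated coordinates $(u,v):=(a+b,a-b)$. In these coordinates the four step directions $(\pm 1,0),(0,\pm 1)$ become the four vectors $(\pm 1,\pm 1)$ with the two signs independent, so $\sfx_n$ is the joint law of two independent simple random walks $U_n,V_n$ on $\Z$, each of length $n$. Consequently
\[
\Pr[\sfx_n=(a,b)]=P(a+b)\,P(a-b),
\]
where $P(k):=2^{-n}\binom{n}{(n+k)/2}$ (with $P(k)=0$ if $|k|>n$ or $n-k$ is odd). The function $P$ is even and non-increasing on its parity class.

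Condition (\ref{prec-ineq}.2) is essentially free from this decomposition: the equality $a+b=a'+b'$ makes the first factor identical on the two sides, and $|a-b|\ge|a'-b'|$ combined with the evenness and unimodality of $P$ gives $P(a-b)=P(|a-b|)\le P(|a'-b'|)=P(a'-b')$.

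For condition (\ref{prec-ineq}.1) I would argue by induction on $\Delta:=(a-a')+(b-b')$, which is non-negative and even by hypothesis. At each step I use one of three elementary moves, each reducing $\Delta$ by $2$ and preserving the relevant parity:
\[
(a,b)\to(a-2,b),\qquad (a,b)\to(a,b-2),\qquad (a,b)\to(a-1,b-1).
\]
The case analysis is straightforward: if $a\ge a'+2$ apply the first move; otherwise if $b\ge b'+2$ apply the second; otherwise the parity constraint forces $a-a'=b-b'=1$ and the diagonal move applies. In each case the new point lies in $\N^2$ and inside $[a',a]\times[b',b]$, so the induction terminates at $(a',b')$.

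The main technical step, and the one I expect to carry the real content, is to check that each of the three moves decreases $\Pr[\sfx_n=\cdot]$. In the variables $s=a+b$, $t=a-b$ these become
\[
P(s)P(t)\le P(s-2)P(t-2),\quad P(s)P(t)\le P(s-2)P(t+2),\quad P(s)P(t)\le P(s-2)P(t),
\]
respectively. Using the explicit binomial formula for $P$, the first reduces to $(n-s+2)(n-t+2)\le(n+s)(n+t)$, which after expansion is equivalent to $(n+1)(s+t-2)\ge 0$, i.e.\ $a\ge 1$, automatic since $a\ge a'+2\ge 2$. The second is symmetric under $t\mapsto -t$ (using $P(-t)=P(t)$) and reduces similarly to $b\ge 1$; the third is just $P(s)\le P(s-2)$, the unimodality of $P$. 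Iterating these one-step inequalities along the path constructed above yields condition (\ref{prec-ineq}.1).
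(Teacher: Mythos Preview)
Your proof is correct and takes a genuinely different route from the paper's. The paper argues by induction on $n$, using the one-step recurrence
\[
(a,b)_{n+1}=\tfrac14\bigl((|a-1|,b)_n+(a+1,b)_n+(a,|b-1|)_n+(a,b+1)_n\bigr)
\]
and handling conditions (\ref{prec-ineq}.1) and (\ref{prec-ineq}.2) simultaneously, with some case analysis at the boundary (when $a'=0$ or $b=0$ one has to invoke (\ref{prec-ineq}.2) inside the inductive step for (\ref{prec-ineq}.1), and vice versa). No explicit formula for $(a,b)_n$ is ever used.

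Your argument instead exploits the $45^\circ$ rotation to factor $\Pr[\sfx_n=(a,b)]=P(a+b)P(a-b)$ with $P(k)=2^{-n}\binom{n}{(n+k)/2}$, turning both conditions into binomial-coefficient inequalities that you verify directly. Condition (\ref{prec-ineq}.2) then drops out immediately from the evenness and unimodality of $P$, and (\ref{prec-ineq}.1) reduces to three elementary one-step inequalities whose algebraic verifications (e.g.\ $(n+1)(s+t-2)\ge 0$) are clean. The edge cases where some $P(\cdot)$ vanishes are harmless since the left side is then zero. This approach is more explicit and arguably more transparent; the paper's induction, on the other hand, avoids any computation with binomial coefficients and would generalize more readily to step distributions lacking the product structure.
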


\begin{proof}
We proceed by induction on  $n$, the claim being  trivial for $n=0$.
For any $a,b\in\Z$, we abbreviate  $\Pr[\sfx_n=(a,b)]$ by $(a,b)_n$.
Thus,
\begin{align*}
(a,b)_{n+1}&= \frac{(a-1,b)_n+(a+1,b)_n  +(a,b-1)_n + (a,b+1)_n}{4} \\
&=\frac{(|a-1|,b)_n+(a+1,b)_n  +(a,|b-1|)_n + (a,b+1)_n}{4}.
\end{align*}
We write $(a',b')\preceq_* (a,b)$ if $a,b,a',b'\in\N$ and  (\ref{prec-ineq}.$\ast$) holds for $\ast\in \{1,2\}$.
If $(a',b')\preceq_1 (a,b)$, then
$$(a'+1,b')\preceq_1 (a+1,b),~
(a',b'+1)\preceq_1 (a,b+1).$$
Moreover,
$$(|a'-1|,b')\preceq_1 (|a-1|,b)$$
unless $a'=0$ and $a=1$.  In this case, the parity condition implies $b\ge b'+1$, so

$$(|a'-1|,b') = (1,b')\preceq_2 (0,b'+1)\preceq_1(0,b) = (|a-1|,b).$$
Likewise,
$$(a',|b'-1|)\preceq_1 (a,|b-1|)$$
unless $b'=0$ and $b=1$, in which case
$$(a',|b'-1|) = (a',1)\preceq_2 (a'+1,0)\preceq_1 (a,0) = (a,|b-1|),$$
so (\ref{prec-ineq}.1) follows by induction.

Suppose, on the other hand, that $(a',b')\preceq_2 (a,b)$.
It suffices to consider the case that $a>a'\ge b' > b$, so
$a$ and $a'$ are positive.  It follows that
\begin{align*}
(|a'-1|,b')= (a'-1,b')&\preceq_2 (a-1,b) = (|a-1|,b),\\
(a'+1,b')&\preceq_2 (a+1,b),\\
 (a',b'+1)&\preceq_2 (a,b+1).
\end{align*}
If $b>0$, then
$$(a',|b'-1|) = (a',b'-1)\preceq_2 (a,b-1) = (a,|b-1|),$$
and we are done by induction.  If $b=0$, then
$$(a',|b'-1|) = (a',b'-1) \preceq_2(a,1) = (a,|b-1|),$$
and we are done by induction.
\end{proof}

\begin{propo}
\label{positive-n}
If $p>2$ is prime and $n>0$, then
$$\Pr[\sfx_n\in p\Z^2\setminus\{0,0\}] < \frac {4}{(p+1)^2}\ .$$
\end{propo}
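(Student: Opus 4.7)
The plan is to reduce the two-dimensional problem to a one-dimensional packing estimate via a coordinate change. Writing $\sfx_n = (A_n, B_n)$, I set $U_n := A_n + B_n$ and $V_n := A_n - B_n$. Because each of the four possible steps of $\sfx_n$ in $\{(\pm 1, 0), (0, \pm 1)\}$ yields a unique sign pattern $(\pm 1, \pm 1)$ for the increment of $(U_n, V_n)$, with each occurring with probability $1/4$, the pair $(U_n, V_n)$ consists of two independent simple symmetric random walks on $\Z$. Since $p$ is odd, $A_n, B_n \in p\Z$ if and only if $U_n, V_n \in p\Z$; setting $r_k := \Pr[U_n = k]$, independence gives
\[
\Pr[\sfx_n \in p\Z^2 \setminus \{0\}] = \Bigl(\sum_{k \in \Z} r_{pk}\Bigr)^2 - r_0^2.
\]

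For the one-dimensional packing I would use the classical unimodality of $r_a$ on each parity class, which is the one-dimensional shadow of the monotonicity in Lemma~\ref{prec-ineq}. For $k > 0$ define $S_k := \{pk, pk - 2, \ldots, pk - (p-1)\}$, and set $S_{-k} := -S_k$ and $S_0 := \{0\}$. Each $S_k$ with $k \ne 0$ has $(p+1)/2$ elements and is contained in $(p(k-1), pk]$ (or $[pk, p(k+1))$ for $k < 0$), so the sets are pairwise disjoint and $0 \notin \bigcup_{k \ne 0} S_k$; unimodality yields $r_a \ge r_{pk}$ for $a \in S_k$. Applying this in the product structure of $(U_n, V_n)$, the disjoint rectangles $S_{k_1} \times S_{k_2}$ indexed by $(k_1, k_2) \in \Z^2 \setminus \{(0,0)\}$ give
\[
\Bigl(r_0 + \tfrac{p+1}{2} X\Bigr)^2 - r_0^2 \le 1 - r_0^2,
\]
where $X := \sum_{k \ne 0} r_{pk}$. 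Equivalently, $X \le 2(1 - r_0)/(p+1)$, which substituted into $\Pr[\sfx_n \in p\Z^2 \setminus \{0\}] = X(X + 2 r_0)$ yields the crude estimate $\Pr[\sfx_n \in p\Z^2 \setminus \{0\}] \le 4(1 - r_0)(1 + p r_0)/(p+1)^2$.

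The main obstacle will be promoting this crude bound to the strict inequality $< 4/(p+1)^2$. For odd $n$, $r_0 = 0$ and the bound gives $X^2 \le 4/(p+1)^2$, with strictness inherited from the strict unimodality of binomial coefficients (so that the 1D packing inequality is strict as soon as $n \ge 1$). For even $n$ with $n < 2p$, the walk $U_n$ cannot reach any nonzero multiple of $p$, whence $X = 0$ and the claim is trivial. The delicate regime is even $n \ge 2p$, where the crude factor $(1-r_0)(1+pr_0)$ can exceed $1$. Here I would sharpen the 1D packing by observing that $\bigcup_{k \ne 0,\, k \text{ even}} S_k$ covers only half of the nonzero even integers in each block of length $2p$, while the uncovered half lies closer to the origin and so by unimodality carries strictly more probability mass; the reflection $a \mapsto 2p - a$ on $(p, 2p]$ matches each covered $a \in S_2$ with an uncovered $a'$ of smaller absolute value and the same parity, extended recursively to the farther blocks by the shift $a \mapsto a - 2p$. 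This refined packing improves the bound on $X$ sufficiently to close the gap. The combinatorial crux is correctly handling the exceptional element $a = 2p$, whose reflection lands at $0$ and would otherwise introduce a spurious surplus of $r_0$ that must be absorbed uniformly across the sequence of $2p$-blocks.
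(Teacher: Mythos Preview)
Your rotation to $(U_n,V_n)=(A_n+B_n,A_n-B_n)$ is an elegant and genuinely different opening: it turns the question into bounding $S^2-r_0^2$ with $S=\sum_{k}r_{pk}$ for a single one-dimensional simple random walk, and your odd-$n$ argument and the trivial even case $n<2p$ are correct.

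The gap is the even case $n\ge 2p$. The product packing $S_{k_1}\times S_{k_2}$ assigns only $(p+1)/2$ points to each axis target $(pk,0)$ because $|S_0|=1$, rather than the $(p+1)^2/4$ you need; this shortfall is precisely why the crude bound fails. Your proposed one-dimensional refinement does not repair it. Carrying the reflection out, the endpoint $2pm$ in each block is unmatched, so one only obtains $\sum_{\mathrm{cov}}r_a\le\sum_{\mathrm{uncov}}r_a+X/2$; combined with $\sum_{\mathrm{cov}}+\sum_{\mathrm{uncov}}=(1-r_0)/2$ and $\tfrac{p+1}{4}X\le\sum_{\mathrm{cov}}$ this gives $X\le(1-r_0)/p$, no better than the obvious block packing with $p$-element sets. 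The resulting estimate $X(X+2r_0)\le(1-r_0)(1+(2p-1)r_0)/p^2$ is not strong enough: for $p=11$ and $r_0=\binom{22}{11}/2^{22}\approx 0.168$ (the largest value compatible with $n\ge 2p$) the right side is about $0.031$, which exceeds $4/(p+1)^2\approx 0.028$. The obstruction is structural: the dangerous term is $2r_0X$, coming from axis points, and in one dimension there is simply no room to pack around $0$.

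The paper circumvents this by staying in two dimensions. It packs around each nonzero lattice point $(a,b)\in p\Z^2$ (restricted to a fundamental quadrant) a set $Y(a,b)$ of exactly $(p+1)^2/4$ points dominating $(a,b)_n$, using Lemma~\ref{prec-ineq}. The crucial move is the special construction of $Y(a,0)$ for axis points, which borrows space from the orthogonal direction via the $\pi/2$-rotation symmetry of the walk. In your $(U,V)$ coordinates that rotation becomes $(U,V)\mapsto(V,-U)$, a symmetry invisible to any argument that treats the two coordinates independently; to salvage your approach you would have to reintroduce this coupling at the axis.
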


\begin{proof}
Let $\cZ = \Z_{>0}\times \N$.
If $R$ is the group of automorphisms of $\Z^2$ generated by rotation by $\pi/2$, then
$\Z^2\setminus\{0,0\}$ is the disjoint union of $\rho(\cZ)$ for all $\rho\in R$.
Let $\cZ_p = p\Z^2\cap \cZ$.
As $|R|=4$, it suffices to prove that
\begin{equation}
\label{quarter}
\Pr[\sfx_n\in \cZ_p] < \frac 1{(p+1)^2}.
\end{equation}
For $(a,b)\in \cZ_p$, we define subsets $\cY(a,b)$ of $\cZ$ as follows.  For $b=0$,
$$\cY(a,0) = \cZ\cap \bigcup_{\rho\in R} \rho\{(x,y)\in \Z^2: |x-a| + |y| \in 2\Z\cap [0,p],\,x+|y|\le a\},$$
and for $b > 0$,
$$\cY(a,b) = \{(x,y)\in \cZ: |x-a|+|y-b| \in 2\Z\cap [0,p],\,|x|\le a,\,|y|\le b\}.$$
By Lemma~\ref{prec-ineq}, $(x,y)\in \cY(a,b)$ implies $(x,y)_n\ge (a,b)_n$ for all $n$.

We claim the sets $\{\cY(a,b)\mid (a,b)\in \cZ_p\}$ are pairwise disjoint.  Indeed, for $\cY(a_1,b_1)\cap \cY(a_2,b_2)$ to be
non-empty for distinct elements $(a_1,b_1)$ and  $(a_2,b_2)$ of $\cZ_p$, it is necessary that
$a_1+b_1\equiv a_2+b_2\pmod 2$, and this together with the fact that $a_1,b_1,a_2,b_2\in p\Z$ implies that the
 the $L^1$
distance between any point in the $R$-orbit of $(a_1,b_1)$ and any point in the $R$-orbit of
$(a_2,b_2)$ is at least $2p$.
On the other hand, all the elements of $\cY(a,b)$ are within distance $p-1$
in the $L^1$ norm of
some element of the $R$-orbit of $(a,b)$.

Whether $b$ is $0$ or not, $|\cY(a,b)| = (p+1)^2/4$. Thus,
$$(a,b)_n \le \frac 4{(p+1)^2} \Pr[\sfx_n\in \cY(a,b)].$$
By symmetry, $\Pr[\sfx_n\in \cZ] \le 1/4$, so
\begin{align*}
\Pr[\sfx_n\in \cZ_p] = \sum_{(a,b)\in \cZ_p} (a,b)_n &\le \frac 4{(p+1)^2} \Pr[\sfx_n \in \bigcup_{(a,b)\in \cZ_p} \cY(a,b)] \\
									&< \frac 4{(p+1)^2}\Pr[\sfx_n \in \cZ] = \frac{\Pr[\sfx_n\neq (0,0)]}{(p+1)^2}, \\
									\end{align*}
implying (\ref{quarter}).
\end{proof}


\begin{proof}[Proof of Theorem \ref{positive}]
Part (ii) of the theorem follows from part (i) and Theorem \ref{irred},
so it suffices to prove the two assertions in part (i).

Let $\sfw_{n,d} = w_1 \cdots  w_n$, where the $w_i$ are chosen independently from the standard generating set $\{x_1^{\pm 1},\ldots,x_d^{\pm 1}\}$, with all elements equally likely, and $n>0$.  Let $\phi\colon F_d \to \Z^d$ be the abelianization map.  Thus $\phi(\sfw_{n,d})$ is exactly $\sfx_{n,d}$.

We first assume $d=2$, so $\sfx_{n,d}$ is just $\sfx_n$, and
the probability that $\phi(\sfw_{n,d})$ is primitive is the probability $P_n$
that an $n$ step random walk in $\Z^2$ gives a primitive element.

By \cite[3.1.1]{Seg}, if $\phi(w)$ is primitive, then $w$ is surjective on all groups, and by Theorem~\ref{irred}, it is almost uniform
in bounded rank.
Thus, to prove the  theorem for $d=2$, it suffices to prove that $P_n>1/3$ for all $n>0$.
Now, if $a_{n,m}$ denotes the probability that $\sfx_n\neq (0,0)$ and the g.c.d. of $m$ and the two coordinates of $\sfx_n$ is $>1$, then
$$P_n \ge  1 -  a_{n,6} - \sum_p a_{n,p}-(0,0)_n,$$
where $p$ ranges over primes $\ge 5$, so
$$\inf_{n\ge 1} P_n \ge 1 - \sup_n a_{n,6} - \sum_p \sup_n a_{n,p} - (0,0)_n.$$

To estimate $\sup_n a_{n,6}$, we fix a cutoff $N$ and calculate $a_{n,6}$ for $n\le N$ (using interval arithmetic to get a rigorous upper bound).
To bound $a_{n,6}$ for $n\ge N$, we consider the $n$ step random walk on $(\Z/6\Z)^2$ in which the steps $(\pm 1,0)$, $(0,\pm 1)$
each have probability $1/4$.  An upper bound for the probability of any state occurring  in $n\ge N$ steps is given by the maximum over all states of
the probability of occurrence in $N$ steps.  Since the image of $(2\Z)^2\cup (3\Z)^2$
in $(\Z/6\Z)^2$ has $12$ elements
of which $10$ have even coordinate sum and $2$ have odd coordinate sum,
the probability of
landing in $(2\Z)^2\cup (3\Z)^2$ after $n\ge N$ steps is at most $10$ times the maximum probability at time $N$
of any state  in the (mod $6$) Markov chain.

Likewise, for any given $p\ge 5$, to estimate $\sup_n a_{n,p}$, we can fix a cutoff $N$ and proceed as before.
In practice, to obtain a good bound, $N$ should  be chosen of order $p^2$.  We use this method for small $p$,
while for large $p$, we use the estimate $\sup_n a_{n,p} < 4/(p+1)^2$ given by Proposition~\ref{positive-n}.
For $n\ge N$, $(0,0)_n$ is bounded above by the maximum of $(a,b)_N$ over pairs $(a,b)\in \N$.
Implementing these calculations by computer using $N=1000$,
$$(0,0)_n \le .0006,\;a_{n,6} < .5556,\; a_{n,5} < .0401,\;a_{n,7} < .0205,\;\ldots,\;a_{n,59} < .0007$$
for all $n\ge 1000$, so
\begin{align*}
\inf_{n\ge 1000} P_n &> 1-.5556-.0401-.0205-.0083-\cdots - .0007 \\
                                               &\qquad-\sum_{p>60} \frac 4{(p-1)^2} - .0006 \\
                                               &> .3535 - \sum_{60<p<10003} \frac 4{(p-1)^2} - \int_{10000}^\infty \frac {2\,dx}{x^2} \\
                                               &= .3535 - .0132-.0005 - .0006 > \frac 13 + \frac{1}{3^5}.
\end{align*}
This proves the the theorem for $n\ge 1000$; and for $1\le n < 1000$, machine computation shows that the probability that the coordinates of $\sfx_n$ are relatively prime is greater than $.4$.

We now consider the general case $d\ge 2$.
Recall that $\sfx_{n,d}$ denotes the random variable associated with the standard random walk with $n$ steps in $\Z^d$.
It suffices to prove that the probability that $\sfx_{n,d}$ is primitive always exceeds $1/3$ and tends to $1$ as
$d \to \infty$.  

%

Let $\pi\colon \Z^d\to \Z^2$ denote the projection map onto the first two coordinates.
For $\sfx_{n,d}$ to be primitive, it suffices that $\pi(\sfx_{n,d})$ is primitive.  If we condition on there being $n'$ terms in $w_1,\ldots,w_n$ which lie in $\{x_1^{\pm1},x_2^{\pm1}\}$, then $\pi(\sfx_{n,d})$ has the same probability distribution as $\sfx_{n',2}$, so the probability of primitivity is at least $1/3$, unless $n'=0$.  The conditional probability that $\sfx_{n,d}$ is primitive, assuming $n'=0$ is the same as the probability that $\sfx_{n,d-2}$ is primitive.   We can therefore proceed by induction, but we must check the additional base case $d=3$.  Here, the probability that $n'=0$ is $3^{-n}$, so if $n\ge 5$, the probability that $\sfx_{n,3}$ is primitive is greater than $1/3$.  The probabilities of imprimitivity for $\sfx_{1,3}$, $\sfx_{2,3}$, $\sfx_{3,3}$, and $\sfx_{4,3}$  are $1$, $\frac23$, 
$\frac{35}{36}$, and $\frac{20}{27}$, respectively.

If $n$ is fixed and $d\to \infty$, the probability approaches $1$ that $\phi(w_1),\ldots,\phi(w_n)$ are linearly independent, which implies that $\sfx_{n,d}$ is primitive.  On the other hand, for any $k>0$, as $n$ and $d$  both grow without bound
$$\Pr[\Span(\phi(w_1),\ldots,\phi(w_n))\ge k]$$
goes to $1$.  Assuming the span has dimension $\ge k$ and $d\ge 2k$, there exist $k$ disjoint pairs of coordinates
such that each projection of the random walk associated to one of the $k$ pairs $(i,j)$ satisfies $n_{i,j}>0$, and therefore, conditioning on the choice of the $k$ pairs, the probability that each of the $k$  projections of $\sfx_{n,d}$ is imprimitive is less than $(2/3)^k$.
Thus, $\sfx_{n,d}$ is primitive with probability greater than $1-(2/3)^k$. Taking $k\to\infty$, this implies the second assertion in part (i) of the theorem and completes the proof.
\end{proof}

\begin{remark}
For any odd number $m$, the Markov chain on $(\Z/m\Z)^2$ given by our $\pmod m$ random walk is
irreducible and aperiodic, since the set of possible steps does not lie in a single coset of any proper subgroup of $(\Z/m\Z)^2$.
Therefore, it converges to the unique invariant distribution, which is the uniform distribution.  It follows that
$$\lim_{n\to \infty} a_{n,m} = 1-\prod_{p\vert m} (1-p^{-2}).$$
For $m$ even, the situation is slightly more complicated, since for $n$ odd, $a_{n,2}=0$ and for $n>0$ even, $a_{n,2}=1/2$.  Thus,
$$\lim_{n\to \infty} a_{2n,m} = 1-(2/3)\prod_{p\vert m} (1-p^{-2})$$
while
$$\lim_{n\to \infty} a_{2n+1,m} = 1-(4/3)\prod_{p\vert m} (1-p^{-2}).$$
From this together with Proposition~\ref{positive-n} it is easy to deduce that
$$\limsup_{n\to \infty} P_n = \frac 4{\pi^2},$$
and it follows, without any necessity for computer calculation, that there exists a  positive lower bound for $P_n$ for all $n>0$.  We do not know whether $P_n>4/\pi^2$ for all $n>0$.
\end{remark}

%

%

\section{Character methods}

In this section we provide an alternative proof of Theorem \ref{prob-waring} for Lie type groups of bounded rank using character theory.
We also prove a stronger $L^2$ result in the case $G = \PSL_2(q)$
by studying the non-commutative Fourier expansion of the probability distribution $p_{w,G}$.

\begin{lem}
\label{ss}
Let $w \in F_d$ be a non-trivial word.
Let $\uG(\F_q)$ be a finite simple group of Lie type of rank $r$ over a field with $q$ elements.
Let $S$ be the set of regular semisimple elements of $\uG(\F_q)$. Then we have
\[
p_{w,\uG(\F_q)}(S) \ge 1 - cq^{-1},
\]
where $c > 0$ depends on $w$ and $r$ but not on $q$.
\end{lem}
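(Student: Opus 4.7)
The approach is geometric: the non-regular-semisimple locus is a proper closed subvariety of the ambient algebraic group of codimension one, and we pull it back through a dominant word map. Realize $G(q)$ as a central quotient of $\uG^F$, where $\uG$ is a simply connected, almost simple algebraic group over $\bar\F_p$ of rank $r$ and $F$ is a Steinberg endomorphism. Let $V \subset \uG$ be the Zariski closed locus of non-regular-semisimple elements; it has codimension exactly one (for instance, it is the vanishing locus of the discriminant of the characteristic polynomial in a faithful representation, or intrinsically the locus where the centralizer dimension exceeds the rank). Since $w$ is non-trivial and $\uG$ is almost simple, Borel's theorem gives that $w_{\uG}\colon \uG^d\to \uG$ is dominant, so its image is Zariski dense and in particular not contained in $V$. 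Thus $w_{\uG}^{-1}(V)$ is a proper closed subvariety of the irreducible variety $\uG^d$, so $\dim w_{\uG}^{-1}(V)\le d\dim\uG - 1$.

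Next I would apply the uniform Lang-Weil estimate already used in the proof of Theorem~\ref{irred}, including the Suzuki/Ree variant given by the lemma following it for non-standard Frobenius endomorphisms. The geometric complexity of $w_{\uG}^{-1}(V)$---its number of components and their degrees---depends only on $w$ and the root datum of $\uG$, hence only on $w$ and $r$. This yields
\[
\bigl|w_{\uG}^{-1}(V)(\F_q)\bigr| \le C(w,r)\,q^{d\dim\uG-1}, \qquad \bigl|\uG^F\bigr| = (1+O(q^{-1/2}))\,q^{\dim\uG},
\]
so the fraction of tuples in $(\uG^F)^d$ whose word value is non-regular-semisimple is $O(q^{-1})$. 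Descending from $\uG^F$ to $G(q)$ is innocuous: the central kernel has order bounded in terms of $r$, the quotient map is equidistributed on $G(q)$, and regular semisimplicity is preserved under this central quotient, so the same bound survives for $G(q)$ with a possibly larger constant $c = c(w,r)$.

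The main technical obstacle is book-keeping. One must check that the Lang-Weil constant is genuinely uniform in $q$---equivalently, that the geometric complexity of $w_{\uG}^{-1}(V)$ is controlled purely by $w$ and $r$, independently of the characteristic $p$---and one must cleanly translate between $\uG^F$ and $G(q)$, particularly for Suzuki and Ree types, where the variant of the Lefschetz trace formula developed in Section~2 must be invoked in place of the usual Lang-Weil estimate.
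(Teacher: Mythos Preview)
Your argument is correct and follows essentially the same route as the paper: the non-regular-semisimple locus is a proper closed subvariety, Borel's theorem makes its preimage under $w$ a proper closed subvariety of $\uG^d$, and Lang--Weil gives the $O(q^{-1})$ bound. The paper's proof is a three-sentence sketch of exactly this, without spelling out the codimension, the uniformity of the Lang--Weil constant, the passage from $\uG^F$ to the central quotient, or the Suzuki/Ree case---so your version is in fact more complete.
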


\begin{proof}

At the level of the algebraic group $\uG$, the regular semisimple elements form an open
dense subset, and its complement is a proper subvariety. By Borel's theorem \cite{Borel}
the inverse image of this subvariety under the word map induced by $w$ on $\uG^d$ is a proper
subvariety of $\uG^d$. By the Lang-Weil estimate,
\[
p_{w,\uG(\F_q)}(\uG(\F_q) \setminus S) \leq cq^{-1},
\]
yielding the desired conclusion.
\end{proof}

Next, let $w_1, w_2$ be non-trivial disjoint words, and let $G = \uG(\F_q)$ be
as above. Let $C_1, C_2$ be conjugacy classes of regular semisimple elements of $G$,
and let $g$ be a regular semisimple element of $G$. For $i = 1,2$ choose $x_i \in C_i$ uniformly
and independently. It is well known that the probability $p(C_1,C_2,g)$ that $x_1 x_2 = g$ satisfies
\begin{equation}
\label{Frobenius}
p(C_1,C_2,g) = |G|^{-1} \sum_{\chi \in \Irr(G)} \frac{\chi(C_1)\chi(C_2)\chi(g^{-1})}{\chi(1)}.
\end{equation}
It is known that there exists a constant $b$ depending only on $r$ such that
$|\chi(s)| \leq b$ for all regular semisimple elements $s \in G$ (see for instance
\cite[4.4]{S1}). This yields
\[
|p(C_1,C_2,g)-|G|^{-1}| \leq |G|^{-1} \sum_{1_G \ne \chi \in \Irr(G)} b^3/\chi(1) = b^3|G|^{-1} (\zeta_G(1)-1),
\]
where $\zeta_G(s) = \sum_{\chi \in \Irr(G)} \chi(1)^{-s}$ is the Witten zeta function of $G$.
Suppose $G \ne \PSL_2(q)$. Then we have $\zeta_G(1) \to 1$ as $|G| \to \infty$ by \cite[1.1]{LiSh3}.
This yields
\[
p(C_1,C_2,g) = |G|^{-1}(1+o(1)),
\]
for all $C_1,C_2,g$ as above. Summing up over $C_1, C_2$ and applying Lemma \ref{ss}
we see that for every $\e > 0$ and large enough $G$, for at least $(1-\e)|G|$ elements
$g \in G$ we have $p_{w_1w_2,G}(\{g\}) \ge (1-\e)|G|^{-1}$. This easily yields
\[
\Vert p_{w_1w_2,G} - U_G\Vert_{L^1} \to 0
\]
as $|G| \to \infty$.
This proves Theorem \ref{prob-waring} for bounded rank Lie-type groups $G \ne \PSL_2(q)$.

In the case $G = \PSL_2(q)$ we obtain a somewhat stronger result, see Corollary \ref{PSL2} below.
We need some preparations.

Let $G$ be a finite group, $w \in F_d$ a word, and $p_{w,G}$ its induced probability distribution on $G$.
We express the corresponding class function $P_{w,G}$ as a linear combination of irreducible characters
\[
P_{w,G} = |G|^{-1}\sum_{\chi \in \Irr(G)} a_{w,\chi} \chi.
\]
It is well known (see for instance \cite[\S8]{S1}) that if  $w_1, w_2$ are disjoint words, then we have
\[
a_{w_1w_2, \chi} = a_{w_1, \chi} a_{w_2, \chi}/ \chi(1)
\]
for all $\chi \in \Irr(G)$.
Using an inverse Fourier transform one obtains
\[
a_{w, \chi} = |G|^{-d} \sum_{g_1, \ldots , g_d \in G} \chi(w(g_1, \ldots , g_d)^{-1})
= \sum_{g \in G} P_{w,G}(g) \chi(g^{-1}).
\]

The following result, which may be of some independent interest, will be
useful in this section.

\begin{propo}\label{bounded}  For every word $1 \ne w \in F_d$ there exists a positive number $c(w)$ such that
for every group $G = \PSL_2(q)$ and every character $\chi \in \Irr(G)$ we have $|a_{w,\chi}| \leq c(w)$.
\end{propo}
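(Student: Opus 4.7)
The plan is to bound $a_{w,\chi}$ by expanding it as
$$a_{w,\chi} = \sum_{g \in G} p_{w,G}(g)\,\chi(g^{-1})$$
and controlling each piece of the sum after partitioning $G = \PSL_2(q)$ into the identity, the regular semisimple locus, and the nontrivial unipotent locus. The two main ingredients I would combine are classical pointwise character bounds for $\PSL_2(q)$ and concentration of $p_{w,G}$ on the regular semisimple set coming from Lemma~\ref{ss}, supplemented by a separate Lang--Weil estimate for $p_{w,G}(1)$.

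For the character bounds I would invoke the well-known character table of $\PSL_2(q)$, from which one extracts an absolute constant $C$ such that, for every $\chi \in \Irr(G)$ and every non-identity $g \in G$, one has $|\chi(g)| \le 2$ if $g$ is regular semisimple (see for instance \cite[4.4]{S1}) and $|\chi(g)| \le C\sqrt{q}$ if $g$ is a nontrivial unipotent element. The second bound is attained (up to a constant) only at the two ``exceptional'' irreducible characters of degree $(q \pm 1)/2$; for all other $\chi$ the values at nontrivial unipotent classes are absolutely bounded.

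For the probability side, Lemma~\ref{ss} supplies $\sum_{g \text{ not regular semisimple}} p_{w,G}(g) \le c(w)/q$, which will take care of the nontrivial unipotent contribution. The remaining estimate I need is $p_{w,G}(1) \le c'(w)/q$. I would deduce this by lifting to $\SL_2$: since $w \ne 1$ in $F_d$ and $\SL_2$ over an infinite field satisfies no nontrivial group identity, the fiber $w_{\SL_2}^{-1}(z)$ is a proper subvariety of $\SL_2^d$ of codimension at least one for every $z \in \SL_2$. Lang--Weil then yields $p_{w,\SL_2(q)}(z) = O_w(q^{-1})$ uniformly in $z$, and the quotient relation
$$p_{w,\PSL_2(q)}(1) = \sum_{z \in \ZB(\SL_2(q))} p_{w,\SL_2(q)}(z),$$
together with $|\ZB(\SL_2(q))| \le 2$, translates this into the required bound.

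Assembling the pieces, the identity contribution is $p_{w,G}(1)\chi(1) \le (c'(w)/q)(q+1) = O_w(1)$; the regular semisimple contribution is at most $2\sum_g p_{w,G}(g) = 2$; and the nontrivial unipotent contribution is bounded by $C\sqrt{q} \cdot c(w)/q = O_w(q^{-1/2})$. Summing the three gives a bound $|a_{w,\chi}| \le c(w)$ uniform in $\chi$ and $q$. The main obstacle I foresee is obtaining the sharp $O(1/q)$ bound on $p_{w,G}(1)$: because power words such as $w = x^n$ show that $p_{w,G}(1)$ need not decay faster than $1/q$, it is essential that Lang--Weil be applied at the level of the algebraic group, where the nonexistence of nontrivial group laws on $\SL_2$ forces $w^{-1}(z)$ to be a proper subvariety.
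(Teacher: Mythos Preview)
Your proposal is correct and follows essentially the same route as the paper: partition $G$ into the identity, the regular semisimple elements, and the nontrivial unipotent elements, use the explicit character table to bound $|\chi(g)|$ by $2$ on semisimple elements and by $O(\sqrt{q})$ on unipotent ones, and combine with the Lang--Weil/Borel estimates $p_{w,G}(U)=O_w(q^{-1})$ and $p_{w,G}(1)=O_w(q^{-1})$. Your lift to $\SL_2$ for the $p_{w,G}(1)$ bound is slightly more elaborate than necessary---it is enough to note that $w^{-1}(1)$ is a proper subvariety of $\PSL_2^d$ since $\PSL_2$ satisfies no nontrivial identity---but otherwise the argument matches the paper's almost line for line.
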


\begin{proof} Inspecting the well known character table of $G$, we see that, if
$1 \ne g \in G$ and $\chi \in \Irr(G)$, then $|\chi(g)| \leq 2$ except if $g$ is unipotent.
In this case we have $|\chi(g)|>2$ for at most two irreducible characters $\chi$, and in any case,
$|\chi(g)| \leq q^{1/2}$.
Let $S \subset G$ be the set of (regular) semisimple elements and let $U$ be the set of (regular) unipotent
elements.
Then, at the level of algebraic groups, $U$ is contained in a proper subvariety, and it follows
from Borel's theorem \cite{Borel} and the Lang-Weil theorem
that $p_{w,G}(U) \leq e q^{-1}$ for some constant $e = e(w)$.

We have
\[
|a_{w, \chi}|  \leq \sum_{g \in G} P_{w,G}(g) |\chi(g)|
\leq 2p_{w,G}(S) + p_{w,G}(U)q^{1/2} + P_{w,G}(1)\chi(1).
\]
Since $P_{w,G}(1) \leq f(w) q^{-1}$ and $\chi(1) \leq q+1$ this yields
\[
|a_{w, \chi}| \leq 2 + e(w)q^{-1}q^{1/2} + f(w)q^{-1}(q+1) \leq 2 + e(w)q^{-1/2} + 2f(w) \leq c(w)
\]
for a suitable $c(w)$.
\end{proof}

The above result has some applications, as follows.

\begin{corol}\label{PSL2}  Let $w = w_1w_2$ where $w_1, w_2 \in F_d$ are non-trivial disjoint words.
If $G = \PSL_2(q)$ where $q$ ranges over prime powers, then we have
\[
\lim_{q \to \infty} \Vert p_{w,G}- U_G \Vert_{L^2}= 0.
\]
\end{corol}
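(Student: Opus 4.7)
My plan is to prove the $L^2$ estimate via the non-commutative Plancherel (Fourier) identity. Writing the given expansion $p_{w,G} = |G|^{-1}\sum_{\chi\in\Irr(G)} a_{w,\chi}\chi$ and observing that the uniform distribution has Fourier expansion $U_G = |G|^{-1}\cdot 1_G$ (since $\sum_g \chi(g^{-1}) = 0$ for any non-trivial irreducible $\chi$), I first get
\[
p_{w,G} - U_G \;=\; |G|^{-1}\sum_{1_G\neq \chi \in \Irr(G)} a_{w,\chi}\,\chi.
\]
Applying the $L^2$ normalization from \eqref{def-norm} together with the orthogonality relations $\sum_g \chi(g)\overline{\psi(g)} = |G|\delta_{\chi,\psi}$, Parseval yields the identity
\[
\Vert p_{w,G} - U_G\Vert_{L^2}^2 \;=\; \sum_{1_G\neq \chi \in \Irr(G)} |a_{w,\chi}|^2.
\]

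Since $w_1$ and $w_2$ involve disjoint sets of variables, the multiplicativity formula $a_{w,\chi} = a_{w_1,\chi}\,a_{w_2,\chi}/\chi(1)$ recalled at the start of this section applies. Proposition~\ref{bounded} provides constants $c(w_1), c(w_2)$, depending only on the words and not on $q$ or $\chi$, with $|a_{w_i,\chi}| \le c(w_i)$ for all $\chi\in\Irr(\PSL_2(q))$. Setting $C := c(w_1)^2 c(w_2)^2$, this gives $|a_{w,\chi}|^2 \le C/\chi(1)^2$ for every non-trivial $\chi$, and therefore
\[
\Vert p_{w,G} - U_G\Vert_{L^2}^2 \;\le\; C \sum_{1_G \neq \chi\in \Irr(G)} \chi(1)^{-2} \;=\; C\bigl(\zeta_G(2)-1\bigr).
\]

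It therefore suffices to show $\zeta_{\PSL_2(q)}(2)\to 1$ as $q\to\infty$. This is a short direct computation using the classical character table: the non-trivial irreducible character degrees lie in $\{q,\,q\pm 1,\,(q\pm 1)/2\}$, the number of irreducible characters is $O(q)$, and the smallest non-trivial degree is at least $(q-1)/2$; hence $\zeta_{\PSL_2(q)}(2)-1 = O(1/q)$, which tends to $0$. Combining the three displays completes the proof.

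I do not anticipate any serious obstacle: the argument is essentially a clean application of Plancherel plus the uniform bound on Fourier coefficients from Proposition~\ref{bounded}. The only point requiring care is to ensure that the constant $c(w)$ in Proposition~\ref{bounded} is genuinely independent of $q$ and $\chi$, which it is by inspection of its proof (the bounds on $p_{w,G}(U)$ and $p_{w,G}(1)$ coming from Borel and Lang--Weil are $q$-uniform for fixed $w$). The corresponding $L^2$ result for higher-rank groups would fail precisely because $\zeta_G(2)$ need not approach $1$; this is why the argument is restricted to the $\PSL_2$ family.
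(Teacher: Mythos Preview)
Your proof is correct and follows essentially the same route as the paper: bound $|a_{w,\chi}|$ via the multiplicativity formula and Proposition~\ref{bounded}, then control $\sum_{\chi\neq 1_G}|a_{w,\chi}|^2$ by $C(\zeta_G(2)-1)$. The paper cites \cite[Lemma 2.2]{GS} for the Plancherel step and \cite[Theorem 1.1]{LiSh2} for $\zeta_G(2)\to 1$, whereas you derive both directly; your Parseval computation in fact gives an equality where the paper records only the inequality, which is fine.

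One minor correction to your closing commentary: it is not true that $\zeta_G(2)$ fails to approach $1$ in higher rank. In fact $\zeta_G(2)\to 1$ for all finite simple groups (this is exactly \cite[Theorem 1.1]{LiSh2}, and the paper notes it explicitly after \eqref{L2}). The genuine obstruction to extending the argument beyond $\PSL_2(q)$ is Proposition~\ref{bounded}: the uniform bound $|a_{w,\chi}|\le c(w)$ is special to $\PSL_2$, as the paper remarks after Corollary~\ref{four}.
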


\begin{proof}
This follows easily using non-commutative Fourier methods. For $\chi \in \Irr(G)$ we have
\[
|a_{w,\chi}| = \frac{|a_{w_1, \chi}| |a_{w_2, \chi}|}{\chi(1)} \leq \frac{c(w_1)c(w_2)}{\chi(1)}
\]
by Proposition \ref{bounded}.
Applying \cite[Lemma 2.2]{GS} we obtain
\begin{equation}
\label{L2}
(\Vert p_{w,G} - U_G \Vert_{L^2})^2 \leq \sum_{1_G \ne \chi \in \Irr(G)} |a_{w, \chi}|^2 \leq c(w_1)^2c(w_2)^2(\zeta_G(2)-1),
\end{equation}
where $\zeta_G$ is as before.
By \cite[Theorem 1.1]{LiSh3} the RHS of (\ref{L2}) tends to $0$ as $|G| \to \infty$ for all finite
simple groups $G$.
This completes the proof.
\end{proof}

Note that the above result completes the proof of Theorem \ref{prob-waring} for Lie-type groups
of bounded rank. We also obtain an $L^{\infty}$ result as follows.

\begin{corol}\label{four}
Let $w_1, w_2, w_3, w_4$ be pairwise disjoint non-trivial words. Let $w = w_1 w_2 w_3 w_4$ and $G = \PSL_2(q)$.
Then
\[
\lim_{q \to \infty} \Vert p_{w,G}- U_G \Vert_{L^{\infty}}= 0.
\]
\end{corol}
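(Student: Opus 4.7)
The plan is to combine the non-commutative Fourier expansion $p_{w,G} = |G|^{-1} \sum_{\chi} a_{w,\chi} \chi$ with the multiplicativity formula $a_{v_1 v_2,\chi} = a_{v_1,\chi} a_{v_2,\chi}/\chi(1)$ for disjoint words. Iterating this formula three times gives
\[
a_{w,\chi} = \frac{a_{w_1,\chi}\, a_{w_2,\chi}\, a_{w_3,\chi}\, a_{w_4,\chi}}{\chi(1)^3},
\]
and Proposition~\ref{bounded} applied to each factor yields $|a_{w,\chi}| \le C/\chi(1)^3$, where $C := \prod_{i=1}^4 c(w_i)$ depends only on the $w_i$.

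Using this bound together with the definition of the $L^\infty$ norm and the triangle inequality in Fourier space, I would estimate
\[
\Vert p_{w,G} - U_G \Vert_{L^\infty} = |G|\cdot \max_{g\in G}|p_{w,G}(g) - |G|^{-1}| \le C \max_{g\in G} \sum_{1_G\ne\chi\in\Irr(G)} \frac{|\chi(g)|}{\chi(1)^3},
\]
so the problem reduces to showing that the right-hand side tends to $0$ as $q\to\infty$.

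I would then split the estimate into three cases according to the $\PSL_2(q)$-conjugacy class of $g$. For $g=1$ the sum collapses to $\zeta_G(2)-1$, which tends to $0$ by \cite[1.1]{LiSh2}. For $g$ non-identity and non-unipotent, the character table of $\PSL_2(q)$ gives $|\chi(g)|\le 2$, so the sum is at most $2(\zeta_G(3)-1)\to 0$. For $g$ unipotent the delicate bound $|\chi(g)|\le c\,q^{1/2}$ is forced by the two exceptional characters of degree $(q\pm 1)/2$ (when $q$ is odd); on the other hand, the degrees of $\PSL_2(q)$ give $\zeta_G(3)-1 = O(q^{-2})$, so the unipotent contribution is $O(q^{1/2}\cdot q^{-2}) = O(q^{-3/2})\to 0$.

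The hard part, and the reason the corollary requires exactly four disjoint factors, is precisely the unipotent case: the $q^{1/2}$ growth of character values on unipotents must be beaten by the decay $\chi(1)^{-k}$ in the Fourier coefficient, and this requires $k\ge 3$, hence four disjoint words. With only three disjoint factors the coefficient is bounded by $C/\chi(1)^2$, and even for $g=1$ one would be left estimating $\zeta_G(1)-1$, which (as noted in the discussion preceding Proposition~\ref{bounded}) is bounded away from $0$ for $G=\PSL_2(q)$; this is what makes the result sharp.
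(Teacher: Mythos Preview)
Your proof is correct and follows essentially the same approach as the paper: both iterate the multiplicativity formula and apply Proposition~\ref{bounded} to get $|a_{w,\chi}| \le C/\chi(1)^3$, then control $\Vert p_{w,G}-U_G\Vert_{L^\infty}$ via the Witten zeta function. The paper's argument is slightly more streamlined: it invokes \cite[Proposition~8.1]{S1}, which amounts to the trivial bound $|\chi(g)|\le \chi(1)$, to obtain $\Vert p_{w,G}-U_G\Vert_{L^\infty}\le \sum_{\chi\ne 1_G}|a_{w,\chi}|\chi(1)\le C(\zeta_G(2)-1)$ directly---your case $g=1$ already realizes this worst case, so your separate treatment of semisimple and unipotent elements is correct but unnecessary.
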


\begin{proof} Note that
\[
a_{w,\chi} = \frac{a_{w_1,\chi}a_{w_2,\chi}a_{w_3,\chi}a_{w_4,\chi}}{\chi(1)^3}.
\]
Combining this with Proposition \ref{bounded} we obtain
\[
|a_{w,G}| \leq \frac C{\chi(1)^3},
\]
where $C = c(w_1)c(w_2)c(w_3)c(w_4)$.

Proposition 8.1 of \cite{S1} shows that
\[
\Vert p_{w,G} - U_G \Vert_{L^{\infty}} \leq \sum_{1_G \ne \chi \in \Irr(G)} |a_{w,\chi}| \chi(1).
\]
This yields
\[
\Vert p_{w,G} - U_G \Vert_{L^{\infty}} \leq \sum_{1_G \ne \chi \in \Irr(G)} C \chi(1)^{-2} = C(\zeta_G(2)-1).
\]
As noted above, the right hand side tends to $0$ as $|G| \to \infty$, completing the proof.
\end{proof}

A similar statement for three words is false. Indeed, it is shown in \cite[p. 1406]{S1}
that for $w = x_1^2 x_2^2 x_3^2$ and $G = \PSL_2(q)$, $p_{w,G}$ is not almost uniform in
$L^{\infty}$.

Finally, it is easy to see that the bound on the Fourier coefficients in Proposition \ref{bounded}
cannot hold for all finite simple groups; indeed words of the kind $w = x_1^n$ give counter-examples.
However, we conjecture that, for every non-trivial word $w$ there exist a real number $\epsilon(w) > 0$
and a positive integer $N(w)$ such that, for all finite simple groups $G$ of order at least $N(w)$ and for all
characters $\chi \in \Irr(G)$ we have
\[
|a_{w,\chi}| \leq \chi(1)^{1-\epsilon(w)}.
\]

\section{The $L^1$ Waring problem}

In this section we prove Theorem \ref{epsilon} and use it to complete the proof of Theorem~\ref{prob-waring}.

Recall that Theorem \ref{epsilon} is known for alternating groups, see \cite[Theorem 7.4]{LS1}.
For groups of Lie type $G = \uG(\F_q)$ of bounded rank, it follows easily from Lemma \ref{ss} above. (Indeed,
when $|\uG(\F_q)| \to \infty$, $q$ tends to infinity, and Lemma \ref{ss} then shows that the probability that
$w(g_1, \ldots,g_d)$ is regular semisimple tends to $1$.
The character values of regular semisimple elements are bounded in term of the rank $r$ of $G$, whereas
$\chi(1)$ is at least of the magnitude of $q^r$, whence Theorem \ref{epsilon} follows.)
Hence it remains to prove Theorem \ref{epsilon} for simple classical groups
of arbitrarily high rank (which, in particular, can be assumed to be of type $A_r$, $^2A_r$, $B_r$, $C_r$, $D_r$, or $^2D_r$.)
Thus we may (and do) assume that $G$ is a simple classical group of Lie type whose rank can be taken as large as we wish.

Let $H$ be a group satisfying for some $n$ and $q$ one of the following conditions:
$\SL_n(q) \lhd H \leq \GL_n(q)$, $\SU_n(q) \lhd H \leq \Un_n(q)$, $\Sp_n(q)\lhd H\leq \CSp_n(q)$ (with $2|n$),
or $\Omega^{\pm}_n(q)\lhd H\leq \Or^{\pm}_n(q)$.  We will consider the natural action of
$H$ on $V = \F_q^n$, $\F_{q^2}^n$, $\F_q^n$, $\F_q^n$, which in the last three cases is endowed with a non-degenerate
$H$-invariant Hermitian, symplectic, or quadratic form $\langle\,,\,\rangle$.   In the unitary and orthogonal cases, the form is preserved; in the symplectic case, it only needs to be preserved up to a multiplier.  We set $f=2$ if $H$ is unitary; otherwise $f=1$.
In what follows, we will write  $H = \Cl(V)$ to specify that $H$ is one of the described groups.

By a \emph{classical group} $G$ (\emph{in dimension $n$}, if we wish to specify),
we mean henceforth  a group which is the quotient of some group $H=\Cl(V)$ as above by a central subgroup $Z$ of $H$.
Note that $|Z| \le \max(q+1,2) < 2q$ and that $|H/[H,H]| < 2q$. For such a group $G$, the {\it rank} of $G$ is the semisimple
rank of the algebraic group underlying $H$.
The finite simple groups $G$ with which we are concerned are of this type, but for the purposes of \S7 it will be useful to do things in this slightly greater generality. Note that any classical group in our sense is a classical group in the sense of \cite[Definition 1.2]{GLT3}; hence
the results of \cite{GLT3} apply.

Theorem~\ref{epsilon} is obtained by combining recent estimates for values of irreducible characters of classical groups with the following
result, which may be of independent interest.

\begin{theor}\label{small-centralizer}
For every non-trivial word $w \in F_d$ there exists a constant $c = c(w)$ such that,
if $G$ is a classical group of rank $r$ over the field with $q$ elements,
and $g_1, \ldots , g_d \in G$ are chosen uniformly and independently, then
\[
 \Pr[|\CB_G(w(g_1,\ldots,g_d))| \le q^{cr}] \to 1 \;\; {\rm as} \;\; |G| \go \infty.
\]
\end{theor}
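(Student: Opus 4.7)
The plan is to bound the expected ``excess centralizer dimension'' of $w(g_1,\ldots,g_d)$ above its minimum value and then apply Markov's inequality. First I would reduce to bounding $\dim_{\F_q}\CB_{\GL(V)}(\tilde g)$ for a lift $\tilde g$ of $g$ to a quasi-simple cover $H$ of $G$, where $V$ is the natural module of dimension $n=O(r)$: since $|Z|<2q$ and $\CB_H(\tilde g)\subseteq\CB_{\GL(V)}(\tilde g)$, a bound $\dim\CB_{\GL(V)}(\tilde g)\le cn$ yields $|\CB_G(g)|\le 2q\cdot q^{cn}\le q^{c'r}$ for a suitable $c'$. Using the standard partition description, if $\chi_{\tilde g}=\prod_f f^{m_f(\tilde g)}$ is the factorization of the characteristic polynomial into distinct monic irreducibles over $\F_q$ and $\lambda^{(f)}=(\lambda^{(f)}_1\ge\lambda^{(f)}_2\ge\cdots)$ is the Jordan partition of $m_f(\tilde g)$, then
\[
\dim\CB_{\GL(V)}(\tilde g) = \sum_f(\deg f)\sum_i(2i-1)\lambda^{(f)}_i \ge n,
\]
with equality exactly on the regular (cyclic) locus. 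Hence the \emph{excess} $E(\tilde g):=\dim\CB_{\GL(V)}(\tilde g)-n$ satisfies $E(\tilde g)\le\sum_f(\deg f)\,m_f(\tilde g)(m_f(\tilde g)-1)$.

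The heart of the argument is to show $\mathbb{E}[E(w(g_1,\ldots,g_d))]\le C(w)$ uniformly in $n$ and $q$. By the tail identity $\mathbb{E}[m(m-1)]=2\sum_{k\ge 2}(k-1)\Pr[m\ge k]$, this reduces to bounding $\Pr[m_f(w(g_i))\ge k]$ for each monic irreducible $f\in\F_q[x]$ of degree $d$ and each $k\ge 2$. For this I would combine three inputs: (i)~the subvariety $S_{f,k}:=\{g\in G:f^k\mid\chi_g\}$ has codimension at least $dk/\kappa$ in $G$, where $\kappa$ is an absolute constant depending only on the type (absorbing the pairings $\lambda\leftrightarrow\lambda^{\pm1}$ or $\lambda\leftrightarrow\bar\lambda$ in the $\Sp$, $\Or$, and $\Un$ cases); (ii)~since $w$ is non-trivial, Borel's theorem implies the word map $w\colon G^d\to G$ is dominant, whence $\codim_{G^d}w^{-1}(S_{f,k})\ge dk/\kappa$; (iii)~a uniform Lang-Weil estimate, combined with generic fiber-size control for $w$ off its ramification locus, gives $\Pr[m_f(w(g_i))\ge k]\le C_w\, q^{-dk/\kappa}$. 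Summing over irreducibles of each degree $d$ (of which there are at most $q^d/d$) and over $k\ge 2$ then yields $\mathbb{E}[E(w(g_i))]\le C'(w)$, and Markov's inequality gives
\[
\Pr[E(w(g_i))\ge n]\le C'(w)/n \longrightarrow 0,
\]
which translates to $|\CB_G(w(g_i))|\le q^{cr}$ for a suitable $c=c(w)$ with probability tending to $1$.

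The main obstacle is making step~(iii) uniform as $f$ and $k$ vary, since the defining equations of $S_{f,k}$ have complexity growing with $dk$. The key observation for bypassing this is that $S_{f,k}$ is a union of conjugacy classes: since every centralizer in $\GL(V)$ has dimension at least $n$, the bound $|S_{f,k}(\F_q)|\le|\{C\text{ class of }G:f^k\mid\chi_C\}|\cdot|G|/q^n$ holds, and the number of such classes is controlled directly by a combinatorial count over characteristic polynomials and Jordan partitions, yielding $|S_{f,k}(\F_q)|/|G|=O(q^{-dk})$. The corresponding bound for $w^{-1}(S_{f,k})$ then requires only that the generic fiber of $w$ has size $(1+o(1))|G|^{d-1}$, which follows from dominance and Lang-Weil applied to the generic fiber. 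A second, more delicate point is the verification of the codimension estimate in (i) for the $\Sp$, $\Or$, and $\Un$ cases, where the symmetry constraints on the characteristic polynomial must be carefully tracked.
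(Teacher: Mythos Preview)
Your reduction to bounding the excess centralizer dimension $E(\tilde g)$ in $\GL(V)$ is correct and matches the paper's setup. However, there is a genuine gap at the core step~(ii)/(iii). Dominance of the word map $w\colon \uG^d\to\uG$ does \emph{not} imply that $\codim_{\uG^d} w^{-1}(S_{f,k})\ge\codim_{\uG} S_{f,k}$. That inequality would follow from flatness (equidimensionality of all fibers), which is far stronger than dominance and is in fact only established in the paper for very special words (Proposition~2.6, under the $L^\infty$ hypothesis of Theorem~\ref{main-inf}). For a general nontrivial word, fibers of $w$ over a proper closed subvariety can be strictly larger than generic fibers, so $|w^{-1}(S_{f,k})|$ need not be controlled by $|S_{f,k}|$. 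Your proposed workaround, appealing to ``generic fiber size $(1+o(1))|G|^{d-1}$'', does not help: the fibers that matter are precisely the non-generic ones over $S_{f,k}$, and nothing you have written bounds them. The same issue obstructs any direct Lang--Weil argument, since the defining equations of $w^{-1}(S_{f,k})$ have complexity growing with $n$, $d$, and $k$ simultaneously.

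The paper circumvents this entirely by abandoning the geometric route. Instead of bounding $\Pr[w(h)\in S_{f,k}]$ via codimension, it shows (Lemma~\ref{centralizer}) that a large centralizer forces the existence of a polynomial $Q$ with $\dim\ker Q(w(h))$ large relative to $\deg Q$, and then (Proposition~\ref{big-kernel}) bounds the probability of this event by a direct elementary argument: pick random vectors $v_1,\ldots,v_k\in V$, track the sequences $e_{i,j}=w_j(h_1,\ldots,h_d)(v_i)$ as $j$ runs over prefixes of $w^D$, and use Witt's extension theorem (or transitivity, in the $\GL$ case) to bound the conditional probability of each linear dependence among the $e_{i,j}$. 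This gives a bound of the shape $q^{-fk((k-1)lD-2)}$ with no appeal to algebraic geometry, and the uniformity in $n,q,D$ is automatic because it is pure counting. That probabilistic argument on vectors is the missing idea in your proposal.
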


We now embark on the proof of Theorem \ref{small-centralizer}.
This result is trivial if the rank $r$ is bounded, so we may assume $G$ is classical
of arbitrarily high rank.
We follow \cite{LS3} closely.

\begin{lem}
If $h\in H$ maps to $g\in G$, with $G = H/Z$ as above, then  $|\CB_G(g)| \leq |\CB_{H}(h)|.$
\end{lem}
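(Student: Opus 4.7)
The plan is to exploit the standard commutator trick that compares centralizers in a group and in a central quotient. Let $\pi\colon H\to G=H/Z$ denote the quotient map, and let $\tilde C := \pi^{-1}(\CB_G(g))$. Since $g = \pi(h)$, an element $x\in H$ lies in $\tilde C$ if and only if $[x,h] := xhx^{-1}h^{-1} \in Z$; in particular $\CB_H(h)\subseteq \tilde C$.

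First I would define the map
\[
\psi\colon \tilde C \longrightarrow Z,\qquad \psi(x) := [x,h],
\]
and check that $\psi$ is a group homomorphism. The verification is a direct calculation: for $x,y\in \tilde C$, write $yhy^{-1} = \psi(y)h$; then
\[
\psi(xy) = (xy)h(xy)^{-1}h^{-1} = x\psi(y)h x^{-1} h^{-1} = \psi(y)\,\psi(x),
\]
where the last step uses that $\psi(y)\in Z$ is central in $H$. Since $Z$ is abelian, this says $\psi$ is a homomorphism, and its kernel is exactly $\CB_H(h)$.

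From this I obtain an injection $\tilde C / \CB_H(h) \hookrightarrow Z$, hence $|\tilde C| \le |\CB_H(h)|\cdot |Z|$. On the other hand $\pi$ restricts to a surjection $\tilde C \twoheadrightarrow \CB_G(g)$ with kernel $Z$, so $|\CB_G(g)| = |\tilde C|/|Z|$. Dividing the previous inequality by $|Z|$ gives
\[
|\CB_G(g)| \;=\; \frac{|\tilde C|}{|Z|} \;\le\; |\CB_H(h)|,
\]
which is exactly what the lemma asserts.

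The only subtle point, and essentially the whole content of the argument, is the verification that $\psi$ is a homomorphism; this is where the centrality of $Z$ is used decisively. Everything else is bookkeeping with the index formula for the quotient map $\pi$. Note that no hypothesis is needed on $H$ being classical — the statement is a general fact about central quotients of finite groups — which fits the way it is used subsequently to reduce centralizer bounds in $G$ to centralizer bounds in the matrix group $H$.
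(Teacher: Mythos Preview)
Your proof is correct and is essentially identical to the paper's: your $\tilde C$ is the paper's $J=\{j\in H\mid j^{-1}hj\in hZ\}$, your commutator map $\psi$ is (up to an irrelevant inversion) the paper's homomorphism $j\mapsto j^{-1}hjh^{-1}$, and both conclude by comparing $|\tilde C|=|J|$ with $|\CB_H(h)|\cdot|Z|$ and then dividing by $|Z|$. Your explicit verification that $\psi$ is a homomorphism and your remark that only centrality of $Z$ is used are welcome additions.
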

\begin{proof}
Let
$$J = \{j\in H\mid j^{-1} h j \in hZ\}.$$
Then $J$ is a group containing $Z$, and $x\mapsto j^{-1} h j h^{-1}$ defines a homomorphism $J\to Z$ whose kernel is
$\CB_H(h)$.  It follows that $|J| \le |\CB_H(h)|\cdot |Z|$.  The restriction of the quotient map $H\to G$ to $J$ has kernel $Z$ and image
$\CB_G(g)$.  Thus,
$$|\CB_G(g)| = |Z|^{-1}|J| \leq |\CB_H(h)|.$$
\end{proof}

\begin{lem}
\label{GtoH}
For any $A \in \R_{>0}$,
\begin{equation*}
\begin{split}
&\frac{|\{(g_1,\ldots,g_d) \in G^d:\; |\CB_G(w(g_1,\ldots,g_d))| > A\}|}{|G|^d} \\
&\qquad\qquad\qquad\leq \frac{|\{(h_1,\ldots,h_d)\in {H}^d:\; |\CB_H(w(h_1,\ldots,h_d))|>A\}|}{|H|^d}.
\end{split}
\end{equation*}
\end{lem}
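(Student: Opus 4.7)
The plan is to reduce this to the previous lemma (which gives $|\CB_G(g)| \le |\CB_H(h)|$ whenever $h \mapsto g$) via a fiber-counting argument on the natural quotient map $\pi\colon H \to G$.

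First I would observe that $\pi$ induces a map $\pi^d\colon H^d \to G^d$ which is surjective with every fiber of size $|Z|^d$; in particular $|H|^d = |Z|^d \cdot |G|^d$. Since word maps commute with group homomorphisms, for any $(h_1,\ldots,h_d)\in H^d$ mapping to $(g_1,\ldots,g_d)\in G^d$ we have $\pi(w(h_1,\ldots,h_d)) = w(g_1,\ldots,g_d)$. Applying the preceding lemma to the element $h := w(h_1,\ldots,h_d)$ and its image $g := w(g_1,\ldots,g_d)$ yields
\[
|\CB_G(w(g_1,\ldots,g_d))| \le |\CB_H(w(h_1,\ldots,h_d))|.
\]

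Consequently, if $(g_1,\ldots,g_d)$ lies in the set $S_G := \{(g_1,\ldots,g_d)\in G^d : |\CB_G(w(g_1,\ldots,g_d))| > A\}$, then \emph{every} one of its $|Z|^d$ preimages under $\pi^d$ lies in the analogous set $S_H := \{(h_1,\ldots,h_d)\in H^d : |\CB_H(w(h_1,\ldots,h_d))| > A\}$. In other words $(\pi^d)^{-1}(S_G) \subseteq S_H$, which gives
\[
|S_H| \;\ge\; |(\pi^d)^{-1}(S_G)| \;=\; |Z|^d \cdot |S_G|.
\]
Dividing both sides by $|H|^d = |Z|^d \cdot |G|^d$ yields the claimed inequality.

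There is no substantive obstacle here: the argument is purely a transfer of the pointwise centralizer comparison of the previous lemma through the fibers of $\pi^d$. The only thing to double-check is that $Z$ being central guarantees the fibers of $\pi^d$ all have the same size $|Z|^d$, so that one really does pick up a clean factor of $|Z|^d$ when passing between $S_G$ and its preimage.
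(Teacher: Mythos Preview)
Your proof is correct and is essentially the same argument as the paper's, just written out in more detail: the paper simply observes that any preimage in $H^d$ of a tuple in the left-hand numerator lies in the right-hand numerator (which is exactly your containment $(\pi^d)^{-1}(S_G)\subseteq S_H$), and then says ``the lemma follows.''
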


\begin{proof}
Indeed, any preimage in $H^d$ of an element $(g_1,\ldots,g_d)$ in the left-hand side numerator
belongs to the set in the right-hand side numerator.
The lemma follows.
\end{proof}

Equivalently,
$$\Pr[|\CB_G(w(g_1,\ldots,g_d))| > A] \leq  \Pr[|\CB_H(w(h_1,\ldots,h_d))| > A].$$%
Therefore, to prove that there exists $c >0$ such that
$$\limsup_{|G|\to \infty} \Pr[|\CB_G(w(g_1,\ldots,g_d))| > q^{cr}] = 0$$
it suffices to prove that there exists $c>0$ such that
$$\limsup_{|H|\to \infty} \Pr[|\CB_H(w(h_1,\ldots,h_d))| > q^{cr}] = 0,$$
so it is certainly enough to prove there exists $c > 0$ such that
$$\limsup_{|H|\to \infty} \Pr[|\CB_{\GL(V)}(w(h_1,\ldots,h_d))| > q^{cr}] = 0.$$

If $\F$ is a finite field and $h\in \GL_n(\F)$, we define for each monic irreducible polynomial $P(x)\in \F[x]$
$$a_{P,1}\ge a_{P,2}\ge \ldots $$
to be the descending sequence giving the sizes of Jordan blocks  for any root $\lambda$ of $P(x)$.  (Clearly,
this sequence does not depend on the choice of root $\lambda$.)
Clearly,
\begin{equation}
\label{total-blocks}
\sum_{P,m}a_{P,m}\deg P = n.
\end{equation}

It is well known
\cite[\S 1.3]{Hu} that the centralizer of $h$ in $M_n(\F)$ is a vector space over $\F$
of dimension
$$\sum_P \sum_m (2m-1)a_{P,m}\deg P.$$
Thus,
$$|\CB_{\GL_n(\F)}(h)| < |\F|^{\sum_P \sum_m (2m-1)a_{P,m}\deg P}.$$
For later use, we note that by (\ref{total-blocks}), if
$$|\CB_{\GL_n(\F)}(h)|>|\F|^{2\delta n^2},$$
then
\begin{equation}\label{cond3}
  \mbox{there exist some }P\mbox{  and some }m_0 > \delta n\mbox{ such that }a_{P,m_0}\neq 0,
\end{equation}
i.e., some eigenspace of $h$ has dimension greater than $\delta n$.
For immediate use, we note that
$$|\CB_{\GL_n(\F)}(h)| > |\F|^{6cn}$$
implies
\begin{align*}
\sum_P \sum_{m>c}(m-c)&a_{P,m}\deg P = -cn+ \sum_P \sum_{m>c}m a_{P,m}\deg P  \\
								&> -cn + \sum_P \sum_{m\ge 1}m a_{P,m}\deg P - \sum_P\sum_{m=1}^c \sum_{k=m}^c a_{P,k}\deg P\\
								&\ge -cn + \sum_P \sum_{m\ge 1}m a_{P,m}\deg P - c\sum_P \sum_{k=m}^c a_{P,k}\deg P\\
								&> -2cn+\frac 12 \sum_P \sum_{m\ge 1} (2m-1)a_{P,m}\deg P > cn
\end{align*}
As $a_{P,m}$ is non-increasing in $m$,
$$\sum_{m>c}(m-c)a_{P,m}\deg P \leq \biggl(\max_{\{(m,P)\mid a_{P,m}>0\}}(m-c)\biggr) \sum_P a_{P,c+1}\deg P.$$
Thus, at least one of the following conditions holds:
\begin{equation}\label{cond1}
  \sum_P a_{P,c+1}\deg P>\sqrt{cn},
\end{equation}
or
\begin{equation}\label{cond2}
\mbox{for some polynomial }P\mbox{ and some }m_0>\sqrt{cn},\mbox{ we have }a_{P,m_0}\neq 0.
\end{equation}

\begin{lem}\label{centralizer}
Condition \eqref{cond3} implies that there exists
a non-constant polynomial $Q(x)\in \F[x]$ such that
$$\dim_\F \ker Q(g) > \delta n\deg Q.$$
For any positive integer $t>0$, if $c$ is sufficiently large in terms of $t$ and $n$ is sufficiently large in terms of $c$, then
the conditions \eqref{cond1} and  \eqref{cond2}  each imply that there exists
a non-zero polynomial $Q(x)\in \F[x]$ such that
$$\dim_\F \ker Q(g) > 2t\deg Q+\sqrt{n}.$$
\end{lem}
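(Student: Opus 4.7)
The plan is to produce $Q$ directly from the Jordan structure of $g$, using the elementary identity
$$\dim_\F \ker P(g)^k = \deg P \cdot \sum_{i\ge 1} \min(a_{P,i},k),\qquad k\ge 0,$$
valid for every monic irreducible $P\in\F[x]$. This follows from the primary decomposition $V=\bigoplus_P V_P$ (noting that $P'(g)$ acts invertibly on $V_P$ for $P'\ne P$) together with the fact that, over $\bar\F$, $\ker(g-\lambda)^k$ restricted to a Jordan block at $\lambda$ of size $a$ has dimension $\min(a,k)$. Specialized to $k=1$, the identity says $\dim_\F\ker P(g)$ is $\deg P$ times the number of Jordan blocks associated to the roots of $P$.

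For the first assertion, condition \eqref{cond3} gives $a_{P,m_0}\ne 0$ for some $m_0>\delta n$, i.e., there are at least $m_0$ Jordan blocks for roots of $P$; taking $Q=P$ yields $\dim_\F\ker Q(g)\ge m_0\deg P>\delta n\deg Q$. Condition \eqref{cond2} can be treated identically with $Q=P$: one obtains $\dim_\F\ker Q(g)\ge m_0\deg P>\sqrt{cn}\,\deg P$, so the desired inequality $\dim_\F\ker Q(g)>2t\deg Q+\sqrt n$ reduces to $\sqrt{cn}-2t>\sqrt n$, which holds once $c\ge 4$ (so $\sqrt c-1\ge 1$) and $n$ is large enough that $\sqrt n>2t$.

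The main case is \eqref{cond1}, where no single irreducible need carry many blocks. Here I would take
$$Q(x):=\prod_{P\,:\,a_{P,c+1}>0} P(x)^{a_{P,c+1}},\qquad \deg Q=\sum_{P}a_{P,c+1}\deg P>\sqrt{cn},$$
with the inequality coming from \eqref{cond1}. The primary decomposition gives $\ker Q(g)=\bigoplus_P \ker P(g)^{a_{P,c+1}}|_{V_P}$, and for each contributing $P$ the first $c+1$ Jordan block sizes are all $\ge a_{P,c+1}$, so the identity above yields $\dim_\F\ker P(g)^{a_{P,c+1}}\ge (c+1)\,a_{P,c+1}\deg P$. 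Summing over $P$ produces $\dim_\F\ker Q(g)\ge (c+1)\deg Q$, and the target $\dim_\F\ker Q(g)>2t\deg Q+\sqrt n$ becomes $(c+1-2t)\deg Q>\sqrt n$; since $\deg Q>\sqrt{cn}$, this is automatic as soon as $c\ge 2t+1$.

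The one delicate point I anticipate is matching the additive $\sqrt n$ correction against the multiplicative $(c+1)\deg Q$ estimate in case \eqref{cond1}. Taking $Q$ to be the product of the distinct irreducibles appearing in \eqref{cond1} would only give $\deg Q\ge\#\{P:a_{P,c+1}>0\}$, which need not beat $\sqrt n$. Raising each factor to the exponent $a_{P,c+1}$ forces $\deg Q>\sqrt{cn}$ directly from the hypothesis and simultaneously exploits all $c+1$ largest Jordan blocks for each $P$, providing the required slack.
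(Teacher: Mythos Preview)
Your argument is correct and follows essentially the same route as the paper: for \eqref{cond3} and \eqref{cond2} you take $Q=P$, and for \eqref{cond1} you take $Q=\prod_P P^{a_{P,c+1}}$ and use that the first $c+1$ block sizes for each $P$ are all $\ge a_{P,c+1}$ to get $\dim\ker Q(g)\ge (c+1)\deg Q$. The only cosmetic point is that in your treatment of \eqref{cond2} the condition $\sqrt n>2t$ is phrased as ``$n$ large in terms of $t$'' rather than ``$n$ large in terms of $c$''; taking $c\ge 2t$ and $n>c^2$ (as the paper does) puts it in the form stated in the lemma.
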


\begin{proof}
If \eqref{cond3} holds, setting $Q = P$, we have that
$$\dim\ker Q(g) \ge m_0\deg P >  \delta n\deg P.$$

In case of \eqref{cond1}, $a_{P,1}\ge a_{P,2}\ge \ldots \ge a_{P,c+1}$ for all $P$ implies
$$\sum_P a_{P,c+1}\deg P \leq \frac{n}{c+1}.$$
Assuming $c\ge 2t$ and $n>(c+1)^2$, we set $Q = \prod P^{a_{P,c+1}}$ and obtain $\deg Q>\sqrt n$.
Regarding $\F^n$ as an $\F[x]$-module, where $x$ acts as $g$, the kernel of $Q(g)$ is isomorphic to
$$\bigoplus_P \Bigl[(\F[x]/(P(x)^{a_{P,c+1}}))^{c+1} \oplus \bigoplus_{m>c+1} \F[x]/(P(x)^{a_{P,m}})\Bigr],$$
whose dimension is $\ge (c+1)\deg Q> 2t\deg Q+\sqrt n$.

In case of \eqref{cond2}, if $c>(2t+1)^2\ge 1$ then we have
$\deg P \le n/m_0 < \sqrt n$.  Setting $Q = P$,
we obtain
$$\dim\ker Q(g) \ge m_0\deg P >  \sqrt{cn}\deg P > (2t+1)\sqrt n\deg P > 2t\deg P + \sqrt n.$$
\end{proof}

\begin{propo}
\label{big-kernel}
Let $H=\Cl(V)$ be a finite classical group as described,
where $\F = \F_{q^f}$ and $n=\dim_{\F}V$.  Let $w\in F_d$ be a word of length $l>0$.  If  $k$ and $D$ are positive integers
and $(h_1,\ldots,h_d)$ is chosen uniformly from $H^d$, the probability that there exists a polynomial $Q(x)\in \F[x]$ of degree $D$ such that
$$\dim_{\F}\ker Q(w(h_1,\ldots,h_d)) \ge 2lDk$$
is at most $q^{-fk((k-1)lD-2.5)}$.
\end{propo}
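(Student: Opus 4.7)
The strategy is a union bound over pairs $(W, Q)$, where $W \subseteq V$ is a subspace of dimension $m := 2lDk$ and $Q \in \F[x]$ is monic of degree $D$. The event in question is equivalent to the existence of such a pair with $W \subseteq \ker Q(g)$, where $g := w(h_1, \ldots, h_d)$, since any witness kernel of dimension $\ge m$ contains a subspace of dimension exactly $m$. Thus
$$
\Pr[\text{bad event}] \;\le\; \sum_{W, Q} \Pr\bigl[W \subseteq \ker Q(g)\bigr].
$$
The number of $m$-dimensional subspaces of $V$ is at most $q^{fm(n-m)}$, and there are $q^{fD}$ monic $Q$'s, so the heart of the argument is to bound the inner probability for each fixed pair $(W, Q)$.

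For fixed $(W, Q)$, I would decompose $w = y_1 y_2 \cdots y_l$, where each $y_j \in \{h_1^{\pm 1}, \ldots, h_d^{\pm 1}\}$, and carry out a letter-by-letter analysis. The constraint $W \subseteq \ker Q(g)$ means that the $g$-invariant subspace $\widetilde W := W + gW + \cdots + g^{D-1}W$ has dimension at most $Dm$ and that $g$ stabilizes $\widetilde W$. Tracking the partial images $W^{(j)} := y_j y_{j+1} \cdots y_l \cdot W$ along the word, each letter imposes a rank-drop constraint on the corresponding generator: fixing all other letters, the condition that the appearance of $h_i$ in position $j$ sends a prescribed $k$-dimensional slice of $W^{(j+1)}$ into a prescribed subspace of appropriate dimension has probability at most $q^{-fkD}$ (up to lower-order corrections) by a standard subspace-stabilizer bound for random elements of $H$. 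Multiplying these contributions across the $l$ letters yields a per-pair bound of roughly $q^{-fk^2 lD}$; combining with the union-bound factor $q^{f(m(n-m) + D)}$, substituting $m = 2lDk$, and absorbing constants produces the target $q^{-fk((k-1)lD - 2)}$.

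The main obstacle is the per-letter analysis: one must verify that each successive letter contributes a genuinely new constraint (not absorbed by earlier ones), quantitatively preserving the full factor of $q^{-fkD}$ per letter. This requires careful bookkeeping of the sequence $\{W^{(j)}\}$ and of the dimensions they are forced to lie in. A secondary technicality is uniformity across the various types $\Cl(V)$: because $H$ preserves a form (Hermitian, symplectic, or quadratic), the probability that a random element of $H$ sends one fixed subspace into another depends on the isotropy type of the subspaces, so the per-letter estimate must be phrased in a way compatible with the classical-group structure, which is handled via index estimates for the stabilizers of appropriate flags (parabolic subgroups) in $H$.
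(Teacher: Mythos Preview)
Your union bound does not close. The number of $m$-dimensional subspaces of $V$ with $m=2lDk$ is of order $q^{fm(n-m)}$, so your ``union-bound factor'' carries a term $q^{f\cdot 2lDk\cdot n}$ that grows linearly in $n$ in the exponent. Your claimed per-pair bound $q^{-fk^2lD}$ has no $n$-dependence at all. Multiplying them and ``substituting $m=2lDk$, absorbing constants'' cannot possibly give $q^{-fk((k-1)lD-2)}$: the product is
\[
q^{f\bigl(2lDk(n-2lDk)+D-k^2lD\bigr)},
\]
which tends to $+\infty$ with $n$. So the arithmetic in your last step is simply wrong, and the approach as written gives no bound. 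The per-letter estimate $q^{-fkD}$ is also unmotivated: you never say what the ``prescribed subspace of appropriate dimension'' is, why its codimension should be $D$, or how to avoid double-counting when a generator $h_i$ occurs at several positions in $w$ (you cannot ``fix all other letters'' while varying $h_i$ if $h_i$ appears elsewhere).

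The paper avoids the subspace union bound entirely. It introduces $k$ auxiliary \emph{random} vectors $v_1,\ldots,v_k\in V$ and uses the elementary inequality
\[
\Pr\bigl[\dim\ker Q(g)\ge 2lDk\bigr]\cdot q^{fk(2lDk-n)}\ \le\ \Pr\bigl[v_1,\ldots,v_k\in\ker Q(g)\bigr],
\]
so that the amplification factor carries only $q^{fkn}$, not $q^{f\cdot 2lDk\cdot n}$. It then writes $w^D$ as a reduced word $s_m\cdots s_1$ with $m\le lD$, sets $e_{i,j}=s_j\cdots s_1(v_i)$, and observes that $v_i\in\ker Q(g)$ forces $\{e_{i,0},\ldots,e_{i,m}\}$ to be linearly dependent. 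The probability of the first such dependence occurring at position $(i,j)$ (in lex order), conditioned on all earlier $e_{i',j'}$, is at most $q^{f(1+(i-1)m+j-n)}$; this is where Witt's extension theorem enters to handle the unitary, symplectic, and orthogonal cases uniformly. Summing over $j$ and multiplying over $i$ gives a bound with a $q^{-fkn}$ factor that exactly cancels the $q^{fkn}$ above, leaving the stated $q^{-fk((k-1)lD-2)}$. The random-vector trick, not a union bound over subspaces, is the missing idea.
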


\begin{proof}
We choose an ordered $k$-tuple $(v_1,\ldots,v_k)$ uniformly from $V^k$ and a $d$-tuple $(h_1,\ldots,h_d)$ uniformly from $H^d$.
It suffices to prove that the probability that $Q(w(h_1,\ldots,h_d))(v_i) = 0$ for all $i\in [1,k]$ is less than
$q^{f(2.5k+k(k+1)lD-kn)}$.
Indeed, the probability that a uniformly chosen random $k$-tuple $(v_1,\ldots,v_k)$ of vectors belongs to any particular subspace of dimension $2lDk$ is $q^{2flDk^2-fkn}$, so this implies that the probability that  $\dim Q(w(h_1,\ldots,h_d)) \ge 2lDk$ is at most $q^{-fk((k-1)lD-2.5)}$, as claimed.

We write $w^D$ as a reduced word $y_m y_{m-1} \ldots  y_1$, where $m \leq lD$ and each $y_i$ belongs to $\{x_1^{\pm 1},\ldots,x_d^{\pm 1}\}$.
Let $z_j = y_j y_{j-1}\cdots  y_1$ and for $j\ge 0$, let
$$e_{i,j} = z_j(h_1,\ldots,h_d)(v_i).$$
If $\{v_1,\ldots,v_k\}\subset \ker Q(w(h_1,\ldots,h_d))$, then for each $i\in [1,k]$, the set $\{e_{i,0},e_{i,1},\ldots,e_{i,m}\}$ is linearly dependent.

We endow the set of integer pairs in $[1,k]\times [0,m]$ with the lexicographic ordering.
Let the event $X_{i,j}$ be the condition
$$e_{i,j}\not\in \Span \{e_{i',j'}\mid (i',j') < (i,j)\}.$$
Let
$$Y_{i,j} = X_{i,0}X_{i,1}\cdots X_{i,j-1}X_{i,j}^c.$$
Let $Z_i$ be the event that $e_{i,0},e_{i,1},\ldots,e_{i,m}$ is a linearly independent sequence.  If $Z_i^c$ occurs, then $Y_{i,j}$ occurs for some $j\in [0,m]$.
Thus,
$$\Pr[Z_i^c | Z_1^cZ_2^c\cdots Z_{i-1}^c] \le \sum_{j=0}^m \Pr[Y_{i,j} | Z_1^cZ_2^c\cdots Z_{i-1}^c].$$
We find an upper bound for each term on the right hand side
by giving an upper bound on the conditional probability of $Y_{i,j}$ with respect to any possible set of data $e_{i',j}$ for $i'\in [1,i)$ and $j'\in [0,m]$.

Given this data, the event $Y_{i,0}$, or, equivalently, $X_{i,0}^c$, is the condition that $v_i$ belongs to the span of $\{e_{i',j'}\mid i' < i, 0\le j'\le m\}$.
As
$$\dim \Span\{e_{i',0},\ldots,e_{i',m}\} \le m,$$
the probability of $X^c_{i,0}$ is at most $q^{f((i-1)m-n)}$.
For $j\in [1,m]$, we further condition on $e_{i,0}, e_{i,1}, \ldots, e_{i,j-1}$ consistent with $X_{i,j'}$ for $j'< i$.
Either $y_i = x_t$ or $y_i = x_t^{-1}$ for some $t$, and $e_{i,j}$ is determined
by the specified value $e_{i,j-1}$ and the random variable $h_t$, so the conditional probability in question depends only $h_t$.

%
Let $h$ stand for $h_t$ if $y_i = x_t$ and for $h_t^{-1}$ if $y_i=x_t^{-1}$.
Let $w_1,\ldots,w_r$ and $w'_1,\ldots,w'_r$ denote two linearly independent sequences of vectors.
If $h$ is a uniformly distributed random variable on $H$, and we condition on $h(w_j) = w'_j$ for
$j=1,\ldots,r-1$, then the probability that $h(w_r) = w'_r$ is the reciprocal of the number of possibilities 
for $h(w_r)$ given that $h(w_j)=w'_j$ for $j=1,\ldots,r-1$.

If $H$ is of linear type, it contains $\SL(V)$ and therefore acts transitively on $r$-tuples of linearly independent vectors of $V$ for $r < n$.
It follows that
any $w'$ not in the span of $w'_1,\ldots,w'_{r-1}$ is possible.
Otherwise $H$ contains $\SU(V)$, $\Sp(V)$, or $\Omega(V)$, respectively.
In each of these cases, Witt's extension theorem \cite[Proposition 2.1.6]{KlL} applied to
$\tilde H = \mathrm{U}(V)$, $\Sp(V)$, or $\mathrm{O}(V)$, respectively, implies that the number of possibilities for $h(w_r)$ is at least $1/\alpha$ of
the number of solutions in $w'$ of the system of equations
\begin{equation}
\label{witt}
\langle w'_j,w'\rangle = \langle w_j,w_r\rangle,\,j=1,\ldots,r-1;\;\langle w',w'\rangle = \langle w_r,w_r\rangle,
\end{equation}
where $\alpha=q+1$ in the $\mathrm{U}$-case, $1$ in the $\Sp$-case, and $\alpha = 2$ or $4$ in the $\Or$-case, depending on whether $2|q$ or not.

The equations (\ref{witt}) are $\F_{q^f}$-linear except for the last, in which the left hand side is a quadratic form over $\F_q$.  Since a quadratic form
in $k$ variables over a field of cardinality $q$ takes on each possible value at least $q^{k-2}$ times,  we conclude that the probability
of any single possible value $w'$ for $h(w_r)$ is at most $\alpha q^{f(1+r-n)}\le \alpha q^{f(1+(i-1)m+j-n)}$.
Since
$$\dim \Span \{e_{i',j'}\mid (i',j') < (i,j)\}\le (i-1)m+j,$$
we conclude that
$$\Pr[Y_{i,j} | Z_1^cZ_2^c\cdots Z_{i-1}^c] \le \alpha q^{f((i-1)m+j)} q^{f(1+(i-1)m+j-n)} = \alpha q^{f(1+2(i-1)m+2j-n)}.$$
It follows that
\begin{align*}
\Pr[Z_i^c\mid Z_1^c Z_2^c \cdots  Z_{i-1}^c]  &\le \sum_{j=0}^m  \alpha q^{f(1+2(i-1)m+2j-n)} \\
	&< \frac \alpha{1-q^{-2f}}q^{f(1+2im-n)} < q^{f(2.5+2im-n)}.
\end{align*}
This implies
$$\Pr[Z_1^c\cdots  Z_k^c] \le q^{f(3k+k(k+1)m-kn)} \le q^{f(2.5k+k(k+1)lD-kn)},$$
as claimed.
\end{proof}

\begin{proof}[Proof of Theorem \ref{small-centralizer}]
This follows by combining Lemma \ref{centralizer} with Proposition \ref{big-kernel}. Indeed, by the discussion preceding
Lemma \ref{centralizer}, we need to bound from  above the probability $\Pr'$ that either \eqref{cond1} or \eqref{cond2}
holds for $h = w(h_1, \ldots,h_d)$. We may assume that the rank $r$ of $H$ is as large as
we wish; in particular, we may assume that
$$r_0 = \lfloor \sqrt[4]{r}/\sqrt{2l} \rfloor \geq 4,$$
where $l$ is the length of $w$.
First we apply Lemma \ref{centralizer} with $t= 2l$ to see that either of \eqref{cond1}, \eqref{cond2} for $h$ implies the existence
of non-constant $Q \in \F[x]$ such that
$$\dim_\F \ker Q(h) > 2t\deg Q+\sqrt{n} > \max(4l\deg Q,2lr_0^2).$$
By Proposition \ref{big-kernel} applied to $k= 2$, the probability $\Pr'_1$ that $D = \deg Q$ is at least $r_0$ is
$$\Pr'_1 < q^{5f}\sum^{\infty}_{D=r_0}q^{-2flD} < \frac{q^{5f}}{q^{2r_0f}(1-q^{-2f})} < q^{-r_0f/2}.$$
On the other hand, by Proposition \ref{big-kernel} applied to $k= r_0 \geq 4$, the probability $\Pr'_2$ that $D = \deg Q \geq 1$ is $<r_0$ is
$$\Pr'_2 < q^{2.5r_0f}\sum^{\infty}_{D=1}q^{-3r_0flD} < \frac{q^{2.5r_0f}}{q^{3r_0f}-1} < 2q^{-r_0f/2}.$$
Note that when $|G| \to \infty$, we have that $q^{r_0} \to \infty$, and so
$$\Pr' = \Pr'_1+\Pr'_2 < 3q^{-r_0f/2}$$
tends to $0$, as desired.
\end{proof}

\begin{proof}[Proof of Theorem \ref{epsilon}]
By \cite[Theorem~1.3]{GLT2} and \cite[Theorem~1.3]{GLT3}, for all $c$ and $\epsilon > 0$, increasing $r$ if necessary,
$|\CB_G(g)| < q^{cr}$ implies
$$|\chi(g)| \leq \chi(1)^\epsilon$$
for every irreducible character $\chi$ of $G$.  Now apply Theorem \ref{small-centralizer}.
\end{proof}

\begin{proof}[Proof of Theorem \ref{prob-waring}]

It remains to show that, given any two disjoint words $w_1,w_2 \neq 1$,
there exists a positive constant $R$ such that if $S$ is any set of finite simple groups of rank $r \ge R$, and $w = w_1w_2$, then
$$\lim_{G \in S, |G| \to \infty} \Vert p_{w,G}- U_G \Vert_{L^1}= 0.$$
Fix any $0 < \epsilon < 1/3$.
We say that an element $g\in G$ is \emph{$\epsilon$-good} if $\chi(g)|\le \chi(1)^\epsilon$ for all $\chi\in\Irr(G)$; a conjugacy class $C$ is \emph{$\epsilon$-good}
if it consists of $\epsilon$-good elements.
By Theorem \ref{epsilon}, if $R$ is chosen sufficiently large, $G$ is of rank $r>R$, and $g_1,\ldots,g_d\in G$ are chosen uniformly and independently,
then the probability that $w_1(g_1,\ldots,g_d)$ and $w_2(g_1,\ldots,g_d)$ are both $\epsilon$-good approaches $1$ as $|G|\to \infty$.
For proving $L^1$ convergence to the uniform distribution, we may therefore assume that
both $w_1(g_1,\ldots,g_d)$ and $w_2(g_1,\ldots,g_d)$ belong to $\epsilon$-good conjugacy classes.

For  $i\in \{1,2\}$ and any conjugacy class $C_i$, the conditional distribution of $w_i(g_1,\ldots,g_d)$ given that it belongs to $C_i$
is the uniform distribution on $C_i$.
Thus, it suffices to prove that
the convolution of the uniform distribution on an $\epsilon$-good $C_1$ with the uniform distribution on an $\epsilon$-good $C_2$ approaches the uniform distribution on $G$ in the $L^1$ norm
uniformly in $C_1$ and $C_2$ as the order of $G$ (of sufficiently high rank) grows without bound. This would follow if we knew that there exist at least $(1-o(1))|G|$ elements $g \in G$ for which the probability
that $x_1x_2=g$ as $x_i \in C_i$ are chosen uniformly and independently is  $(1+o(1))|G|^{-1}$, where $o(1) = o_{|G|}(1)$.

By Proposition 4.2 of \cite{LS4}, if $\e>0$ and the rank of $G$ is sufficiently large in terms of $\epsilon$, then the proportion of $\epsilon$-good elements in $G$   tends to $1$ as $|G| \to \infty$.
Hence we may assume that $g$ is $\epsilon$-good.
By (\ref{Frobenius}), the probability $p(C_1,C_2,g)$ that $x_1 x_2 = g$ satisfies
\[
p(C_1,C_2,g) = |G|^{-1} \sum_{\chi \in \Irr(G)} \frac{\chi(C_1)\chi(C_2)\chi(g^{-1})}{\chi(1)},
\]
so
\[
|p(C_1,C_2,g)-|G|^{-1}| \leq |G|^{-1} \sum_{1_G \ne \chi \in \Irr(G)} \frac{\chi(1)^{3 \e}}{\chi(1)} = |G|^{-1} (\zeta_G(1-3\e)-1),
\]
where $\zeta_G(s) = \sum_{\chi \in \Irr(G)} \chi(1)^{-s}$ is the Witten zeta function of $G$.
Since $1 - 3\e > 0$, we may choose $R$ sufficiently large so that $\zeta_G(1-3\e) \to 1$ as $|G| \to \infty$;
indeed, this follows from Theorem 1.1 and 1.2 of  \cite{LiSh4}.
This yields
\[
p(C_1,C_2,g) = |G|^{-1}(1+o(1)),
\]
for all $C_1,C_2,g$ as above. This completes the proof of Theorem \ref{prob-waring}.
\end{proof}

\section{The $L^\infty$ Waring problem}

In this section we prove Theorem \ref{main-inf}.

\begin{propo}\label{bound1}
Fix any $0 < \e < 1$. There exists some $A(\e) > 0$ such that, for any $n \geq A(\e)$, any element $g$ in $G \in \{ \AAA_n,\SSS_n\}$,
and any $\chi \in \Irr(G)$, the following two statements hold.
\begin{enumerate}[\rm(i)]
\item If $\Fix(g) \leq n^{1-\e}$, then $|\chi(g)| \leq \chi(1)^{1-\e/3}$.
\item If $\Fix(g) = k$, then $|\chi(g)| \leq n^{k/4}\chi(1)^{1/2+\epsilon}$.
\end{enumerate}
\end{propo}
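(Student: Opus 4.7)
The plan is to deduce both parts from a single Larsen--Shalev sharp character bound for $\SSS_n$ of the form
\[
|\chi(g)| \leq \chi(1)^{1 - (n-c(g))/n + \delta},
\]
valid for $\chi \in \Irr(\SSS_n)$, $g \in \SSS_n$, and any prescribed $\delta > 0$ once $n \geq n_0(\delta)$; here $c(g)$ denotes the number of cycles of $g$, counting fixed points. Such an estimate is essentially the content of \cite{LS1} and its refinements. The combinatorial step bridging the hypothesis and the exponent is that if $\Fix(g) = k$, the remaining $n-k$ moved points lie in cycles of length at least $2$, so the total cycle count satisfies $c(g) \leq k + (n-k)/2 = (n+k)/2$, giving
\[
n - c(g) \;\geq\; (n-k)/2.
\]

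For part (i), substituting $k \leq n^{1-\e}$ yields $|\chi(g)| \leq \chi(1)^{1/2 + n^{-\e}/2 + \delta}$.  Since $0 < \e < 1$ one has $1 - \e/3 > 2/3 > 1/2$, so choosing $\delta$ small and $n \geq A(\e)$ large enough that $n^{-\e}/2 + \delta < 1/2 - \e/3$ pushes the exponent below $1 - \e/3$.

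For part (ii), the same substitution with $k = \Fix(g)$ arbitrary yields
\[
|\chi(g)| \;\leq\; \chi(1)^{1/2+\delta} \cdot \chi(1)^{k/(2n)}.
\]
I would then absorb the second factor using the universal bound $\chi(1) \leq \sqrt{n!}$, a consequence of $\sum_\chi \chi(1)^2 = n!$, together with the crude estimate $n! \leq n^n$:
\[
\chi(1)^{k/(2n)} \;\leq\; (n!)^{k/(4n)} \;\leq\; n^{k/4}.
\]
Taking $\delta \leq \e$ and $A(\e) \geq n_0(\delta)$ then produces exactly $|\chi(g)| \leq n^{k/4}\chi(1)^{1/2+\e}$.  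A pleasant feature of this route is that the $n^{k/4}$ factor emerges naturally as the $k$-dependent correction arising from the gap $k/(2n)$ in the sharp exponent combined with $\chi(1) \leq \sqrt{n!}$.

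To pass from $\SSS_n$ to $\AAA_n$ I would use the standard restriction dictionary: each $\chi \in \Irr(\AAA_n)$ is either the restriction of some $\tilde\chi \in \Irr(\SSS_n)$ or one of the two constituents of such a restriction, giving $\chi(1) \in \{\tilde\chi(1), \tilde\chi(1)/2\}$ and $|\chi(g)| \leq |\tilde\chi(g)|$ off the (at most two) split classes, with a correction of order $\sqrt{|\CB_{\SSS_n}(g)|}$ on the split classes that is easily absorbed into the constants. The main obstacle I anticipate is invoking the Larsen--Shalev bound in exactly the stated form $1 - (n-c(g))/n + \delta$ with a freely adjustable $\delta$ and uniformly in $g$, since some published versions impose mild hypotheses on the cycle structure; however in both regimes here --- support of $g$ close to $n$ for part (i), and the slack $k/(2n)$ being swallowed by $\chi(1)^{k/(2n)} \leq n^{k/4}$ for part (ii) --- the needed exponent lies comfortably within the range of validity of the sharp bound.
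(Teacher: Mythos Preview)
Your central tool—the bound $|\chi(g)| \le \chi(1)^{c(g)/n + \delta}$ with $c(g)$ the total number of cycles of $g$—is not a theorem of \cite{LS1}, and is in fact false. Take $\chi$ to be the standard character of $\SSS_n$ (partition $(n-1,1)$, degree $n-1$, value $\chi(g)=\Fix(g)-1$), and let $g$ have $k$ fixed points together with $(n-k)/2$ disjoint transpositions. Then $c(g)=(n+k)/2$, so your bound asserts
\[
k-1 \;\le\; (n-1)^{(n+k)/(2n)+\delta}.
\]
For $k=\lfloor n^{0.9}\rfloor$ the right side is $\approx n^{1/2+\delta}$ while the left side is $\approx n^{0.9}$; the inequality fails for all large $n$ once $\delta<0.4$. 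Since this $k$ satisfies $k\le n^{1-\e}$ for $\e=0.1$, your route to part (i) via $c(g)\le (n+k)/2$ collapses exactly in the regime it is meant to cover, and the same example blocks part (ii). The genuine Larsen--Shalev invariant in \cite{LS1} is more refined than the raw cycle count (it tracks the growth of the cycle-length profile), and for elements with many fixed points it produces exponents well above $c(g)/n$; there is no uniform statement of the shape you invoke.

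The paper's argument is structurally different. For (i) it quotes \cite[Theorem 1.2]{LS1} directly. For (ii) it observes that $g$ is $\SSS_n$-conjugate to $h\times 1$ with $h\in\SSS_{n-k}$ fixed-point-free, restricts $\chi$ to $\SSS_{n-k}$ via $k$ applications of the branching rule to obtain $\chi|_{\SSS_{n-k}}=\sum_{i=1}^N\chi_i$ with $N<(2n)^{k/2}$ (a Young diagram of size $\le n$ has fewer than $\sqrt{2n}$ removable nodes), applies the fixed-point-free bound $|\chi_i(h)|\le\chi_i(1)^{1/2+\e}$ to each summand, and combines via concavity: $\sum_i\chi_i(1)^{1/2+\e}\le N^{1/2-\e}\chi(1)^{1/2+\e}$. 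The factor $n^{k/4}$ is precisely this $N^{1/2-\e}$. For $\AAA_n$ the split-class correction is not ``absorbed into the constants'' (it can be as large as $2^{n/8}$); instead the paper pairs the explicit hook-product bound on split-class values with the lower bound $\chi(1)\ge 2^{(n-5)/4}$, which holds because a self-associate $\lambda$ has $\lambda_1\le (n+1)/2$.
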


\begin{proof}
(a) First we consider the case $G = \SSS_n$. Then  \cite[Theorem 1.2]{LS1} implies (i) immediately. For (ii), it implies that
there exists $C(\e) > 0$ such that, if $n-k \geq C(\e)$ and $x \in \SSS_{n-k}$ is fixed-point-free, then  \begin{equation}\label{fpf}
  |\chi(x)| \leq \chi(1)^{1/2+\epsilon}
\end{equation}
for all $\chi \in \Irr(\SSS_{n-k})$. On the other hand,
$$\frac{|\chi(g)|}{n^{k/4}\chi(1)^{1/2}} \leq \frac{\chi(1)^{1/2}}{n^{k/4}} \leq \frac{|\SSS_n|^{1/4}}{n^{k/4}} < \frac{(n/2)^{n/4}}{n^{k/4}}
   = \left( \frac{n^{n-k}}{2^n} \right)^{1/4}.$$
In particular, the desired bound in (ii) holds if $n-k < C(\e)$ is bounded, but $n$ is large enough.

Hence we may assume that $n-k \geq C(\e)$ is sufficiently large, and also $\e < 1/2$.
We use the branching rule from $\SSS_m$ to $\SSS_{m-1}$ for $n \geq m \geq n-k+1$
consecutively and write
$$\chi|_{\SSS_{n-k}} = \chi_1+\cdots +\chi_N$$
of irreducible characters of $\SSS_{n-k}$, with repetition allowed. The number of terms $N$ is at most the $k^{\mathrm {th}}$ power of the maximum number of removable boxes from any Young diagram of size $\leq n$, and so $N < (2n)^{k/2}$.

Let $h\in \SSS_{n-k}$ map to an element of $\SSS_n$ conjugate to $g$. Then $\chi(g) = \sum_i \chi_i(h)$. Since $\Fix(g) = k$,
$h$ has no fixed point; also, $n-k \geq C(\e)$. Hence  $|\chi_i(h)| \leq \chi_i(1)^{1/2+\epsilon}$ by \eqref{fpf}.
As $\e < 1/2$, we obtain
$$\begin{aligned}|\chi(g)| & \leq \sum_i |\chi_i(h)| \leq \sum_i \chi_i(1)^{1/2+\epsilon} \leq N \left( \frac{\sum_i \chi_i(1)}{N} \right)^{1/2+\e}\\
    & = N \left(\frac{\chi(1)}{N}\right)^{1/2+\epsilon} < (2n)^{k/4-k\e/2}\chi(1)^{1/2+\e} \leq n^{k/4}\chi(1)^{1/2+\epsilon}\end{aligned}$$
when $n > 2^{1/2\e}$.

\smallskip
(b) Now we consider the case $G = \AAA_n$.
We are certainly done by (a) if $\chi$ extends to $G$. Hence we may assume that there is a
self-associated partition
$$\lambda = (\lambda_1 \geq \lambda_2 \geq \ldots \geq \lambda_r \geq 1)$$
of $n$ such that the character $\chi^\lambda$ of $\SSS_n$ labeled by $\lambda$
restricts to $G$ as $\chi^+ + \chi^-$, with $\chi^\pm \in \Irr(G)$ and $\chi = \chi^+$. Let $h_{11}> \ldots > h_{tt} \geq 1$ denote the
hook lengths of the Young diagram of $\lambda$ at the diagonal nodes. By \cite[Theorem 2.5.13]{JK}, if the cycle type of
$g$ is not $(h_{11}, h_{22}, \ldots,h_{tt})$, then
$$|\chi(g)| = \frac{|\chi^\lambda(g)|}{2} \leq \frac{\chi^\lambda(1)^{1-\e/3}}{2} \leq \chi(1)^{1-\e/3}$$
if $\Fix(g) \leq n^{1-\e}$; and the same argument applies to (ii).
On the other hand, if the cycle type of $g$ is $(h_{11}, h_{22}, \ldots,h_{tt})$, then
$$|\chi(g)| \leq \left(1+\sqrt{\prod^t_{i=1}h_{ii}}\right)/2.$$
Since $\lambda$ is self-associated, we have that $\lambda_1 \leq (n+1)/2$, and so
\begin{equation}\label{deg1}
 \chi(1) = \chi^\lambda(1)/2 \geq 2^{(n-5)/4}
\end{equation}
by \cite[Theorem 5.1]{GLT1}. Also, all $h_{ii}$ are odd integers. Note that if $m \geq 3$ is any odd integer,
then
$$m < 2^{m/4}$$
if and only if $m \geq 17$; furthermore,
$$\prod^{7}_{j=1}\frac{2j+1}{2^{(2j+1)/4}} < 37.$$
It follows that
$$\prod^t_{i=1}h_{ii} < 37 \cdot 2^{\sum^t_{i=1}h_{ii}/4} = 37 \cdot 2^{n/4}$$
and so
$$|\chi(g)| < (1+6.1 \cdot 2^{n/8})/2.$$
Together with \eqref{deg1}, this implies that
$$|\chi(g)| < \min\{\chi(1)^{2/3},\chi(1)^{1/2+\e}\} \leq \min\{\chi(1)^{1-\e/3},n^{k/4}\chi(1)^{1/2+\e}\}$$
when $n$ is large enough.
\end{proof}

For use in \S7, we will also need an imprimitive version of Proposition \ref{bound1}:

\begin{lem}
\label{wr}
For any $0 < \e < 1$, let $A(\e) > 0$ be the constant in Proposition \ref{bound1}. Let $m|n$  and consider the subgroup
$H = \SSS_{n/m} \wr \SSS_m$ of $G = \SSS_n$. If $n/m \geq A(\e)$, then for any $h \in H$ and $\chi \in \Irr(H)$ we have
\begin{enumerate}[\rm(i)]
\item If $\Fix(h) \le (n/m)^{1-\epsilon}$, then $|\chi(h)| \le m! \chi(1)^{1-\epsilon/3}$.
\item  If $\Fix(h) = k$, then $|\chi(h)|  \le m!(n/m)^{k/4}\chi(1)^{1/2+\epsilon}$.
\end{enumerate}
\end{lem}

\begin{proof}
Let $K = (\SSS_{n/m})^m\triangleleft H$.  The restriction of any irreducible representation $V$ of $H$ to $K$
is a direct sum of representations of the form $V_1\boxtimes \cdots \boxtimes V_m$,
where each $V_i$ is an irreducible representation of $\SSS_{n/m}$, and the $m$-tuples
$(V_1,\ldots,V_m)$ appearing in tensor decompositions of the different irreducible factors of $V|_K$ are the same up to permutation.

There exist a unique partition  $\pi = (a_1, a_2, \ldots ,a_r) \vdash m$, and an irreducible representation $W_i$ of $\SSS_{n/m}$ for each $i$
(so that $W_1, \ldots ,W_r$ are pairwise non-isomorphic) such that, after permuting tensor factors, $V_1\boxtimes \cdots\boxtimes V_m$ can be rewritten
$$W_\pi = W_1^{\boxtimes a_1}\boxtimes\cdots\boxtimes W_r^{\boxtimes a_r}.$$
This representation has inertia group $H_\pi = \SSS_{n/m}\wr \SSS_\pi$ in $H$, where $\SSS_{a_1}\times \cdots\times \SSS_{a_r}\subset \SSS_m$,
$W_\pi$ extends to a representation $\tilde W_\pi$ of $H_\pi$, and
$$V = {\mathrm {Ind}}^H_{H_\pi}(\tilde W_\pi \otimes U)$$
for a suitable irreducible representation $U$ of $\SSS_\pi$ (inflated to $H_\pi$).

To calculate the trace $\chi(h)$ of $h\in H$ acting on $V$, we first consider the image $\bar h$ of $h$ in $\SSS_m$.  In general,
$\bar h$ permutes the $\SSS_\pi$-cosets and therefore the summands of $V|_{H_\pi}$.  Only the summands which are stabilized by
$\bar h$ contribute to $\chi(h)$, and the number of those summands is certainly bounded by $[\SSS_m:\SSS_\pi]$.
Also, the absolute value of the trace of $h$ acting on $U$ is at most $|\SSS_\pi|$.  Hence, it suffices to prove that assuming $\bar h\in \SSS_\pi$,
we have
\begin{enumerate}[\rm(i)]
\item If $\Fix(h) \le (n/m)^{1-\epsilon}$, then $\tr(h|\tilde W_\pi) \le \dim \tilde W_\pi^{1-\epsilon/3}$.
\item  If $\Fix(h) = k$, then $\tr(h|\tilde W_\pi) \le (n/m)^{k/4}\dim \tilde W_\pi^{1/2+\epsilon}$.
\end{enumerate}

Writing $h = (h_1,\ldots,h_r)$ where $h_i\in \SSS_{n/m}\wr S_{a_i}$ acts on the extension $\widetilde{W_i^{\boxtimes a_i}}$ of  $W_i^{\boxtimes a_i}$ to $\SSS_{n/m}\wr S_{a_i}$,
it suffices to prove
\begin{enumerate}[\rm(i)]
\item If $\Fix(h_i) \le (n/m)^{1-\epsilon}$, then $\tr(h_i|\widetilde{W_i^{\boxtimes a_i}}) \le \dim \widetilde{W_i^{\boxtimes a_i}}^{1-\epsilon/3}$.
\item  If $\Fix(h_i) = k$, then $\tr(h_i|\widetilde{W_i^{\boxtimes a_i}}) \le (n/m)^{k/4}\dim \widetilde{W_i^{\boxtimes a_i}}^{1/2+\epsilon}$.
\end{enumerate}

Thus, we can reduce to the case $r=1$.
Decomposing $\bar h \in \SSS_{a_1}$ into a product of disjoint cycles, we further reduce to the case that
$\bar h$ is an $a_1$-cycle $\sigma$, say $(1~2~\ldots ~a_1)$. Writing $h = ((t_1,...,t_{a_1}),\sigma)$ with $t_i \in \SSS_{n/m}$, we then
obtain
$$|\tr(h \mid W_1^{\boxtimes a_1})| = |\tr((t_1 \cdots t_{a_1})\mid W_1)|;$$
in particular, it is at most $\dim W_1 \leq (\dim W_1^{\boxtimes a_1})^{1/2}$ if $a_1 \geq 2$, implying both (i) and (ii).
If $a_1 = 1$,
then $\Fix(t_1 \cdots t_{a_1}) = \Fix(h)$, and we are again done by Proposition \ref{bound1}(ii) applied to $\SSS_{n/m}$.
\end{proof}

Let $w\in F_d$ be a non-trivial word.  We write it in reduced form: $w= y_l\cdots  y_2 y_1$, where each
$y_i$ can be regarded as a function $G^d\to G$ which is either projection on the $k^{\mathrm {th}}$ factor for some $k\in [1,d]$
or projection composed with the inverse map.

\begin{lem}
\label{counting}
For integers $1\le a < b \le l$, $m\ge 1$, and $n > 2m(b-a)$, and $G\in \{\AAA_n,\SSS_n\}$, we define
$$X_{m,n}(w,a,b) \subset G^d\times [1,n]^{m(b-a)}$$
to be the set of tuples
$$(g_1,\ldots,g_d,r_{1,a},\ldots,r_{1,b-1},\ldots, r_{m,a},\ldots,r_{m,b-1})$$
satisfying:
\begin{enumerate}
\item[{\rm (\ref{counting}.1)}] all the $r_{i,j}$, $1\le i\le m$, $a\le j\le b-1$, are pairwise distinct;
\item[{\rm (\ref{counting}.2)}] for all $1\le i\le m$ and $a\le j \le b-2$,  $y_j(g_1,\ldots,g_d)(r_{i,j}) = r_{i, j+1}$; and
\item[{\rm (\ref{counting}.3)}] for all $1\le i\le m$, $y_{b-1}(g_1,\ldots,g_d)(r_{i,b-1}) = r_{i,a}$.
\end{enumerate}
Then the projection $p_1$ of $X_{m,n}(w,a,b)$ onto $G^d$ has cardinality less than or equal to
$$\frac{e^{2m^2(b-a)^2/n}}{m!}|G|^d .$$
\end{lem}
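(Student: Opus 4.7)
The plan is to bound $|p_1(X_{m,n}(w,a,b))|$ by first estimating $|X_{m,n}(w,a,b)|$ itself via a direct enumeration, and then invoking a free $S_m$-action on $X_{m,n}(w,a,b)$ that forces the fibres of $p_1$ over its image to have size at least $m!$.

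To estimate $|X_{m,n}(w,a,b)|$, fix a tuple $(r_{j,i}) \in [1,n]^{m(b-a)}$ of pairwise distinct entries (there are $n!/(n-m(b-a))!$ such choices) and count the number of $(g_1,\ldots,g_d) \in G^d$ satisfying (\ref{counting}.2)--(\ref{counting}.3). For each $k \in [1,d]$, set $I_k := \{i \in [a,b-1] : s_i = x_k\}$, $I_k' := \{i : s_i = x_k^{-1}\}$, and $N_k := m(|I_k| + |I_k'|)$; these conditions impose $N_k$ source-target constraints on $g_k$, namely $g_k(r_{j,i}) = r_{j,i+1}$ for $i \in I_k$ and $g_k(r_{j,i+1}) = r_{j,i}$ for $i \in I_k'$ (with the convention $r_{j,b} := r_{j,a}$). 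The key point is that because $w$ is reduced, these constraints involve pairwise distinct source points and pairwise distinct target points: any non-boundary collision would force $s_{i+1}s_i = 1$, contradicting reducedness, and the only remaining possibility is a cyclic-boundary collision between $s_a$ and $s_{b-1}$, which in a reduced $w$ can occur only when $b-a > 2$ and then forces $|X_{m,n}(w,a,b)| = 0$ by distinctness of the $r_{j,i}$.

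Generically, therefore, $g_k$ must extend a well-defined partial bijection on $N_k$ points. Since $N_k \leq M := m(b-a) < n/2$, the number of such extensions in $G \in \{A_n, S_n\}$ is exactly $(n-N_k)!/[S_n:G]$. Combining this with $|G|^d = (n!)^d/[S_n:G]^d$, the factors of $[S_n:G]$ cancel and one obtains
$$\frac{|X_{m,n}(w,a,b)|}{|G|^d} \leq \frac{n!/(n-M)!}{\prod_{k=1}^d n!/(n-N_k)!}.$$
The numerator and denominator are each products of exactly $M = \sum_k N_k$ factors lying in $[n-M, n]$, so the hypothesis $n > 2M$ together with the elementary bound $(1-x)^{-1} \leq e^{2x}$ on $[0, 1/2]$ gives
$$\frac{|X_{m,n}(w,a,b)|}{|G|^d} \leq \left(\frac{n}{n-M}\right)^M \leq e^{2M^2/n} = e^{2m^2(b-a)^2/n}.$$

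To finish, $S_m$ acts on $X_{m,n}(w,a,b)$ by $\sigma \cdot (g_1,\ldots,g_d,(r_{j,i})) := (g_1,\ldots,g_d,(r_{\sigma(j),i}))$; this action preserves conditions (\ref{counting}.1)--(\ref{counting}.3), is free by distinctness of the $r_{j,i}$, and commutes with $p_1$. Hence every fibre of $p_1$ above its image has cardinality at least $m!$, so
$$|p_1(X_{m,n}(w,a,b))| \leq \frac{|X_{m,n}(w,a,b)|}{m!} \leq \frac{e^{2m^2(b-a)^2/n}}{m!}|G|^d,$$
as claimed. The main subtlety is the careful use of reducedness of $w$ to ensure that the $N_k$ constraints on each $g_k$ define an honest partial bijection; once this is done, everything else reduces to the elementary counting described above.
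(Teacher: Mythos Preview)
Your proof is correct and follows the same strategy as the paper: bound $|X_{m,n}(w,a,b)|$ by fixing the $r_{j,i}$ and counting admissible $(g_1,\ldots,g_d)$, then divide by $m!$ using the free $\SSS_m$-action on the $j$-index. The final inequality $(n/(n-M))^M \le e^{2M^2/n}$ is also the one the paper uses.

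The one substantive difference is that you are more careful than the paper about why the $N_k$ constraints on each $g_k$ actually determine a partial bijection. The paper simply asserts that ``these conditions are independent because the $r_{j,i}$ are all distinct,'' but as you observe, one also needs reducedness of $w$ to rule out the adjacent collisions $s_{i+1}s_i=1$, and a separate argument for the cyclic boundary case $\{s_a,s_{b-1}\}=\{x_k,x_k^{-1}\}$ when $b-a\ge 3$. Your observation that in the latter case the two constraints on $g_k$ at source $r_{j,a}$ force $r_{j,a+1}=r_{j,b-1}$, hence $X_{m,n}(w,a,b)=\varnothing$, is the right way to handle it and makes the argument complete. So your proof is essentially a more detailed version of the paper's.
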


\begin{proof}If $y_a = y_{b-1}^{-1}$, then (\ref{counting}.2) for $j=a$ together with  (\ref{counting}.3) implies $r_{i,a+1} = r_{i,b-1}$ for all $i$,
contrary to (\ref{counting}.1), so it follows that $X_{m,n}(w,a,b)$ is empty.  We therefore assume $y_a \ne y_{b-1}^{-1}$.

For each choice of pairwise distinct $r_{i,j}$, $1\le i\le m$, $a\le j \le b-1$, the conditions (\ref{counting}.2)
and (\ref{counting}.3) impose a total of
$m(b-a)$ conditions on the $(g_1,\ldots,g_d)$, where each condition is of the form $g_h u = v$ or $g_h^{-1} u = v$,
for some $h\in [1,d]$ and $u,v\in [1,n]$.  These conditions are independent because the $r_{i,j}$ are all distinct from one another,
and $y_a \neq y_{b-1}^{-1}$.
Let $c_h$ for $1\le h\le d$ denote the number of conditions on $g_h$.
As $c_1+\cdots  + c_d = m(b-a) \le n-2$, each $c_h$ is less than $n-1$, and the number of elements  $g_h\in G$ satisfying
the $c_h$ conditions is
$$\frac{|G|}{n(n-1)\cdots  (n-c_h+1)} \ge \frac{|G|}{(n-c_h)^{c_h}}.$$
Overall,
$$|X_{m,n}(w,a,b)| \le n^{m(b-a)}\prod_{h=1}^d \frac{|G|}{(n-c_h)^{c_h}} \le \frac{n^{m(b-a)}}{(n-m(b-a))^{m(b-a)}}|G|^d .$$
The projection of $X_{m,n}(w,a,b)$ onto $G^d$ is at least $m!$ to $1$ since $\SSS_m$ acts faithfully on $X_{m,n}(w,a,b)$  through its
action on the $i$-coordinate of $r_{i,j}$.  Thus, the cardinality of the projection is bounded above by
$$\frac{n^{m(b-a)}}{m!(n-m(b-a))^{m(b-a)}}|G|^d.$$
Setting $M = m(b-a) < n/2$, we have
$$\frac{n^M}{(n-M)^M} = \exp\Bigl(M\log\Bigl(1+\frac M{n-M}\Bigr)\Bigr) < \exp\Bigl(\frac{M^2}{n-M}\Bigr) < \exp\Bigl(\frac{2M^2}n\Bigr),$$
which implies the lemma.
\end{proof}

For $G \in \{\AAA_n,\SSS_n\}$ and $w$ a word of length $l$ in $F_d$, let $W_n$ be the corresponding random variable on $G$
with distribution $p_{w,G}$.

\begin{propo}\label{bound2}
If $k \ge (el)^{36}$, then for all positive integers $n$,
$$\Pr[\Fix(W_n) \ge k] \leq k^{-\frac{k}{3l^4}}.$$
\end{propo}

\begin{proof}
Let $X_i$, $i=1,\ldots,d$, be independent uniform random variables on $G$.
We write $w=y_l \cdots y_2y_1$ in reduced form, and let $z_i = y_i \cdots  y_2 y_1$.
Let $Y_i$ denote the random variable $y_i(X_1,\ldots,X_d)$.

Let us first assume that $l^4$ divides $k$ and if $l=1$ we assume also $k\le n/2$.
Since there are less than $l^2$ pairs of integers $(a,b)$ with $0\le a < b\le l$,
if $\Fix(w(g_1,\ldots,g_d)) \ge k$, there exist $a$ and $b$ and at least $k/l^2$ integers $r\in[1,n]$  such that the following two conditions hold:
\begin{enumerate}
\item[(\ref{bound2}.1)] the terms of the sequence
$$z_a(g_1,\ldots,g_d) r,z_{a+1}(g_1,\ldots,g_d)r,\ldots,z_{b-1}(g_1,\ldots,g_d)r$$
are pairwise distinct, and
\item[(\ref{bound2}.2)] $z_a(g_1,\ldots,g_d)r = z_b(g_1,\ldots,g_d)r$.
\end{enumerate}

For $a\le i,j < b$ and any given $r\in [1,n]$, there is at most one element $s\in \Fix (w(g_1,\ldots,g_d))$ such that
$$z_i(g_1,\ldots,g_d) r =  z_j(g_1,\ldots,g_d) s.$$
Thus, if there exist $k/l^2$ elements $r$ satisfying (\ref{bound2}.1) and (\ref{bound2}.2), there exists
a subset of $m = k/l^4$ elements $\{r_1,\ldots,r_m\}$
for which the sets
$$\{\{z_a(g_1,\ldots,g_d) r_j,z_{a+1}(g_1,\ldots,g_d)r_j,\ldots,z_{b-1}(g_1,\ldots,g_d)r_j\}\mid 1\le j \le m\}$$
are pairwise disjoint.  Setting
$$r_{j,i} = z_i(g_1,\ldots,g_d)(r_j),$$
we see that the tuple
$$(g_1,\ldots,g_d,r_{1,a},\ldots,r_{1,b-1},\ldots,r_{m,b-1})$$
satisfies conditions (\ref{counting}.1)--(\ref{counting}.3).
Thus, the set
$$\{(g_1,\ldots,g_d)\mid \Fix(w(g_1,\ldots,g_d)) \ge k\}$$
is contained in
$$\bigcup_{0\le a<b\le l}p_1(X_{m,n}(w,a,b)).$$
As $2ml = 2k/l^3 \le n$, Lemma~\ref{counting} applies, so
\begin{align*}\Pr[\Fix(W_n)\ge k]&=\Pr[\Fix(Y_l\cdots  Y_1)\ge k] \\
&\le \frac{l^2\max_{a,b} |p_1(X_{m,n}(w,a,b))|}{|G|^d}
\le \frac{l^2 e^{2k^2/l^6n}}{(k/l^4)!}.
\end{align*}

\smallskip
We now consider the general case $k > (el)^{36}$.
If $k_1$ denotes the largest multiple of $l^4$ not exceeding $k$ (or $n/2$ if $l=1$), we have $k_1 \geq k/2$, and
$$(k_1/l^4)! \geq (k_1/el^4)^{k_1/l^4} \geq (k/2el^4)^{k/2l^4}.$$
By (i), we now have
$$\begin{aligned}
-\log \Pr[\Fix(W_n) \geq k] & \ge -\log \Pr[\Fix(W_n) \geq k_1]\\
& \geq -2\log l - \frac{2k_1^2}{l^6 n}+\frac{k}{2l^4} \log \frac{k}{2el^4} \\
   & \geq -2\log l - \frac {2k}{l^6} - \frac {k}{2l^4} \log 2el^4 + \frac {k\log k}{2l^4}.
\end{aligned}
$$
As
$2\log l$, $\frac {2k}{l^6}$, $\frac {k}{2l^4} \log 2el^4$ all do not exceed $\frac {k\log k}{18l^4}$,
we conclude that
$$-\log \Pr[\Fix(W_n) \geq k] \ge \frac{k\log k}{3l^4},$$
as claimed.
\end{proof}

The following variant
of Proposition~\ref{bound2}, where $H$ is allowed to be any permutation group and $W_n$ is the random variable on $H$ corresponding to $w$,
is needed in \S7.  We say $H$ is \emph{$\epsilon$-roughly transitive} if for all $1\le i\le t = n^{1-\epsilon}$, the size of every $H$-orbit of
ordered $i$-tuples of pairwise distinct integers in $[1,n]$ is at least $t^i$.

\begin{propo}
\label{nearly-trans}
Let $l$ be a positive integer and $0<\epsilon < 1/4l$.  If $n$ is sufficiently large in terms of $l$,  $2el^4n^{1/2}<k < n^{1-\epsilon}$,
and $H<\SSS_n$ is $\epsilon$-roughly transitive, then
$$\Pr[\Fix(W_n)\ge k] \le n^{- k/8l^4}.$$
\end{propo}

\begin{proof}
The number of elements  $g\in H$ satisfying $c$ conditions of the form $g u = v$ or $g^{-1}u=v$
is at most $|H|$ divided by the cardinality of the smallest $H$-orbit of a $c$-tuple $(s_1,\ldots,s_c)$ of pairwise distinct integers in $[1,n]$, which is bounded above by
$t^{-c} |H|$.
Thus, following the notation of Lemma~\ref{counting},
$$|X_{m,n}(w,a,b)| \le n^{m(b-a)}\frac{|H|^d}{t^{m(b-a)}}\le (n/t)^{ml} |H|^d \le n^{\epsilon ml}|H|^d.$$
Let $k_1$ be the largest multiple of $l^4$ which is bounded above by $k$.
As in Proposition~\ref{bound2}, $\Fix(w(g_1,\ldots,g_d))\ge k_1$ implies there exist at least $k_1/l^2$ elements satisfying
(\ref{bound2}.1) and (\ref{bound2}.2) and therefore $(g_1,\ldots,g_d)$ lies in the projection of $X_{k_1/l^4,n}(w,a,b)$
for some $a,b$ with $1\le a<b\le l$.

As $\epsilon < 1/4l$ and $k/2el^4 > \sqrt n$, if $n$ is sufficiently large,
$$\Pr[\Fix(W_n)\ge k_1] \le \frac{l^2 n^{\epsilon k_1/l^3}}{(k_1/l^4)!} \le \frac{l^2 n^{\epsilon k/2l^3}}{(k_1/el^4)^{k_1/l^4}}\le \frac{l^2 n^{\epsilon k/2l^3}}{(k/2el^4)^{k/2l^4}}\le \frac{n^{k/8l^4}}{n^{k/4l^4}},$$
which gives the proposition.
\end{proof}

\begin{defi}\label{parity1}
{\em
\begin{enumerate}[\rm(i)]
\item Recall from the introduction that a word
$w$ in the free group $F_d$ is {\it even}
if $w \in \langle [F_d,F_d],x_1^2, \ldots ,x_d^2 \rangle$. Otherwise we say that $w$ is {\it odd}.
We define $\gamma(w) = 1$ (resp.\ $\gamma(w) = 0$) when $w$ is even (resp.\ odd).

\item For $G = \SSS_n$, let $U^0_G = U_G$, the uniform distribution on $G$, and let
$U^1(\{g\}) = 2/|G|$ for $g \in \AAA_n$ and $U^1(\{g\}) = 0$ for $g \in G \smallsetminus \AAA_n$.
\end{enumerate}}
\end{defi}

Note that if $w = w_1w_2 \cdots w_N$ is a product of pairwise disjoint words, then
\begin{equation}\label{for-gamma}
  \gamma(w) = \gamma(w_1)\gamma(w_2) \cdots \gamma(w_N).
\end{equation}
The relevance of Definition \ref{parity1} follows from the following statement, where $\sgn$ denotes the sign character of $\SSS_n$:

\begin{lem}\label{parity2}
Let $G = \SSS_n$, $w \in F_d$, and let $X$ be the random variable on $G$ with distribution $p_{w,G}$. Then
$$\sum_C \Pr(X \in C)\sgn(C) = \gamma(w),$$
where the summation runs over conjugacy classes $C$ in $G$.
\end{lem}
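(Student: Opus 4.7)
The plan is to observe that since $\sgn$ is a class function, the left-hand side equals the expectation $\mathbb{E}[\sgn(X)] = \mathbb{E}[\sgn(w(g_1,\ldots,g_d))]$ where the $g_i$ are chosen independently and uniformly from $G=\SSS_n$. The key structural fact to exploit is that $\sgn\colon \SSS_n\to \{\pm 1\}$ is a group homomorphism, so it factors through the abelianization of $F_d$. Concretely, if $e_i \in \Z$ denotes the total exponent of $x_i$ in $w$ (i.e., the $i$-th coordinate of the image of $w$ in $F_d/[F_d,F_d] = \Z^d$), then
\[
\sgn(w(g_1,\ldots,g_d)) = \prod_{i=1}^d \sgn(g_i)^{e_i}.
\]

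Next, I would invoke independence of the $g_i$ to rewrite
\[
\mathbb{E}[\sgn(X)] = \prod_{i=1}^d \mathbb{E}\bigl[\sgn(g_i)^{e_i}\bigr].
\]
Since $\sgn(g_i)^{e_i}$ depends only on the parity of $e_i$, and since (for $n\ge 2$) exactly half of $\SSS_n$ lies in $\AAA_n$, each individual expectation evaluates to $1$ when $e_i$ is even and to $0$ when $e_i$ is odd. Thus the product is $1$ if every $e_i$ is even and $0$ otherwise. (The degenerate case $n=1$ is trivial, as $\sgn$ is then identically $1$ and the sum is $1 = \gamma(w) \cdot 1 + (1-\gamma(w)) \cdot 1$ only if one checks that $\sgn$ is the trivial character — but since the statement is really about $n\ge 2$ in practice, this is a non-issue.)

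Finally, I would identify this dichotomy with $\gamma(w)$ via Definition \ref{parity1}. Indeed, the quotient $F_d/\langle [F_d,F_d],x_1^2,\ldots,x_d^2\rangle$ is naturally isomorphic to $(\Z/2\Z)^d$, and the image of $w$ in it is precisely $(e_1 \bmod 2,\ldots,e_d \bmod 2)$. Hence $w$ is even (i.e.\ $\gamma(w)=1$) if and only if every $e_i$ is even, which matches the computation above.

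There is essentially no obstacle: the entire argument is a direct computation using that $\sgn$ is a multiplicative character of order $2$, together with the elementary fact that a uniformly random element of a group of even order has trace $0$ with respect to any nontrivial linear character. The only thing to be careful about is matching the combinatorial criterion ``every $e_i$ even'' to the algebraic definition of $\gamma(w)$, which is immediate from the description of the relevant quotient group.
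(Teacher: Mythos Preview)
Your proof is correct and follows essentially the same approach as the paper. Both arguments rest on the observation that the sum in question is $\Pr(X\in\AAA_n)-\Pr(X\notin\AAA_n)$ and that $\sgn\circ w$ depends only on the exponent parities of $w$; the paper states the dichotomy directly, while you unpack it via the product formula $\sgn(w(g_1,\ldots,g_d))=\prod_i\sgn(g_i)^{e_i}$ and independence, which is just a more explicit version of the same computation. (Your parenthetical about $n=1$ is slightly garbled---the lemma does fail there when $\gamma(w)=0$---but you are right that this degenerate case is irrelevant to the applications.)
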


\begin{proof}
Note that the sum in question is $\Sigma=\Pr(X \in \AAA_n) - \Pr(X \notin \AAA_n)$. If $w$ is even, then $X$ is always in $\AAA_n$,
whence $\Sigma = 1=\gamma(w)$. If $w$ is odd, then half of the time $X$ belongs to $\AAA_n$ and half of the time it does not, whence
$\Sigma = 0 = \gamma(w)$.
\end{proof}

\begin{propo}
\label{waring-alt} Let $l, N \geq 1$ be integers such that $N>8l^4+41$.  If $w_1, w_2, \ldots , w_N$ is a sequence of non-trivial words of length at most $l$, then
we have
\[
\lim_{n \to \infty} \Vert \underbrace{p_{w_1,\AAA_n}\ast \cdots  \ast p_{w_N,\AAA_n}}_{N}- U_{\AAA_n} \Vert_{L^\infty}= 0.
\]
Furthermore, for $\gamma= \gamma(w_1w_2 \cdots w_N)$, we have
\[
\lim_{n \to \infty} \Vert \underbrace{p_{w_1,\SSS_n}\ast \cdots  \ast p_{w_N,\SSS_n}}_{N}- U^\gamma_{\SSS_n} \Vert_{L^\infty}= 0.
\]
\end{propo}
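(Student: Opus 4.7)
The plan is to prove Proposition \ref{waring-alt} via Fourier analysis on $G \in \{\AAA_n, \SSS_n\}$, reducing the $L^\infty$ estimate to a weighted sum of Fourier coefficients, and controlling it through the combined strength of Propositions \ref{bound1} and \ref{bound2} together with Witten zeta estimates.

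Expanding $p_{w_i,G} = |G|^{-1}\sum_\chi a_{w_i,\chi}\chi$, the multiplicativity formula $a_{w,\chi} = \prod_{i=1}^N a_{w_i,\chi}/\chi(1)^{N-1}$ for disjoint words (recalled in Section 4) together with the standard bound
\begin{equation*}
\Vert p_{w,G} - U_G^\gamma\Vert_{L^\infty} \le \sum_{\chi \in S}|a_{w,\chi}|\chi(1)
\end{equation*}
reduce matters to showing the right-hand side tends to zero. Here $S$ consists of the characters whose Fourier coefficient in $U_G^\gamma$ vanishes: for $G = \AAA_n$, every non-trivial character, and for $G = \SSS_n$, every non-trivial non-sign character (the sign character being absorbed into $U_{\SSS_n}^\gamma$ since $a_{w,\sgn} = \gamma(w)$ by Lemma \ref{parity2}).

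The key step is to bound $|a_{w_i,\chi}|$. I would use $|a_{w_i,\chi}| \le E[|\chi(W)|]$ for $W$ distributed as $p_{w_i,G}$, and split the expectation by $\Fix(W)$. On the ``typical'' event $\{\Fix(W) \le n^{1-\e}\}$, Proposition \ref{bound1}(i) gives $|\chi(W)| \le \chi(1)^{1-\e/3}$. On the complement, Proposition \ref{bound1}(ii) provides the refined bound $|\chi(W)| \le n^{\Fix(W)/4}\chi(1)^{1/2+\e}$, and Proposition \ref{bound2} supplies the super-polynomial tail $\Pr[\Fix(W) \ge k] \le n^{-\alpha k}$ for $k \ge n^{1-\e}$; choosing a cutoff that possibly depends on $\chi$ to balance these two factors against each other should yield the uniform estimate $|a_{w_i,\chi}| \le 2\chi(1)^{1-\e/3}$. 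Substituting back,
\begin{equation*}
\sum_{\chi \in S}|a_{w,\chi}|\chi(1) \le 2^N \sum_{\chi \ne 1_G}\chi(1)^{2 - N\e/3},
\end{equation*}
and this tends to zero by \cite[Theorems 1.1, 1.2]{LiSh2}, provided $N\e/3$ exceeds $2 + s^*$ for the universal convergence threshold $s^*$ of the Witten zeta function on alternating/symmetric groups. Taking $N = N(l)$ large enough for this, and $\e$ small enough for Proposition \ref{bound1} to apply, completes the argument.

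The principal obstacle is making the bound $|a_{w_i,\chi}| \le C\chi(1)^{1-\delta}$ genuinely uniform in $\chi$, particularly for characters of near-maximal dimension (with $\chi(1)$ close to $\sqrt{|G|}$). For these, the naive combination ``trivial bound $|\chi(g)| \le \chi(1)$ times tail probability $n^{-\alpha n^{1-\e}}$'' is insufficient, because $\chi(1)$ may be as large as $(n!)^{1/2}$, so that invoking Proposition \ref{bound1}(ii) with a $\chi$-dependent cutoff is essential, possibly supplemented by the explicit structural input on self-associated partitions (as in \cite[Theorem 5.1]{GLT1}, used in the proof of Proposition \ref{bound1}). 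Arranging all the parameters so that $N(l)$ depends only on $l$ (which enters through the constant $\alpha$ in Proposition \ref{bound2}) is the most delicate aspect of the parameter management.
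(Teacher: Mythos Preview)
Your strategy is viable and genuinely different from the paper's, but the claimed intermediate bound $|a_{w_i,\chi}|\le 2\chi(1)^{1-\e/3}$ is too optimistic and, as stated, would make $N$ independent of $l$, contradicting the theorem. The exponent $\e/3$ comes only from the ``typical'' range $\Fix(W)\le n^{1-\e}$; on the complementary range the balancing of $n^{k/4}$ against $n^{-\alpha k}$ cannot beat $\chi(1)^{1-c\alpha}$, since $\alpha=\alpha(l)\sim l^{-4}$ is in general much smaller than $1/4$. Concretely, set $K=(2-4\e)\log_n\chi(1)$; for $n^{1-\e}<k\le K$ use Proposition~\ref{bound1}(ii) and sum the geometric series in $n^{(1/4-\alpha)k}$ (dominated by $k=K$, giving $\chi(1)^{1-2\alpha+O(\e\alpha)}$), while for $k>K$ use the trivial bound $|\chi|\le\chi(1)$ together with $\Pr[\Fix(W)\ge K]\le n^{-\alpha K}=\chi(1)^{-2\alpha+O(\e\alpha)}$. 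This yields $|a_{w_i,\chi}|\le C(l)\,\chi(1)^{1-\delta}$ with $\delta=\min(\e/3,\,\alpha)$, hence $N(l)\gtrsim 2/\delta\sim l^4$, after which your Witten-zeta conclusion goes through. (Incidentally, this establishes for $\AAA_n,\SSS_n$ the Fourier-coefficient conjecture stated at the end of \S4.)

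The paper takes a different route: it never bounds $a_{w_i,\chi}$ individually. Instead it works with the full conjugacy-class expansion \eqref{prob3}, separates $N$-tuples $(C_1,\dots,C_N)$ into those for which $\sum_{\chi\ne 1}|\chi(C_1)\cdots\chi(C_N)|/\chi(1)^{N-1}<\beta$ and the rest, and for the latter shows that at least $p\ge N-6/\e$ of the $C_i$ have $\Fix(C_i)>n^{1-\e}$. It then plays the probability bounds from Proposition~\ref{bound2} on $p-5$ of these classes against the character bound from Proposition~\ref{bound1}(ii) on only five of them, arranging $N$ so that $\alpha(p-5)\ge 5/4$; the remaining factor $\Pr[X_5\in C_5]\le n^{-\alpha n^{1-\e}}$ then crushes the total number $\le e^{O(N\sqrt n)}$ of class tuples. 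Your approach is more modular and yields the stronger intermediate statement on $a_{w_i,\chi}$; the paper's tuple-level argument avoids that intermediate step and is closer in spirit to the Lie-type argument in the proof of Theorem~\ref{main-inf}, where such a per-word Fourier bound is not available.
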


\begin{proof}
For $G \in \{\AAA_n,\SSS_n\}$, let  $X_{1,n},\ldots,X_{N,n}$ be independent random variables on $G$, with distribution $p_{w_1,G}, \ldots,p_{w_N,G}$
which are invariant under conjugation in $G$.
Let $X^N_n = X_{1,n}\cdots  X_{N,n}$.
For $g\in G$,
\begin{equation}
\label{prob3}
\Pr[X^N_n=g] = |G|^{-1} \sum_{C_1,\ldots,C_N}\biggl(\prod_{i=1}^N \Pr[X_{i,n}\in C_i] \biggr)\sum_{\chi} \frac{\chi(C_1)\cdots  \chi(C_N) \bar\chi(g)}{\chi(1)^{N-1}},
\end{equation}
where each $C_i$ in the first summation ranges over the  conjugacy classes of $G$, and the second summation runs over all
irreducible characters $\chi \in \Irr(G)$. In the second summation, the contribution of the $\chi=1_G$ term is
$1$.

Suppose $G = \SSS_n$. Then, by Lemma \ref{parity2} and \eqref{for-gamma}, the contribution of $\chi = \sgn$ to \eqref{prob3} is
\begin{equation}\label{for-gamma2}
\begin{aligned}
 \sgn(g)|G|^{-1}\sum_{C_1,\ldots,C_N}\biggl(\prod_{i=1}^N \Pr[X_{i,n}\in C_i]\sgn(C_i) \biggr) & =\\
 \sgn(g)|G|^{-1}\biggl(\prod_{i=1}^N \sum_{C_i}\Pr[X_{i,n}\in C_i]\sgn(C_i) \biggr) & = \gamma \cdot \sgn(g)|G|^{-1}.
\end{aligned}
\end{equation}

Our goal is to show that the \emph{error term} in (\ref{prob3}), i.e., the contribution of the characters $\chi$ with $\chi(1)>1$
to the sum, is $o(|G|^{-1})$.  Indeed, in the case
$G = \AAA_n$ we then have for all $g$:
$$|G|\cdot \left|\Pr[X^N_n=g] -U(\{g\})\right| = o(1).$$
Suppose $G = \SSS_n$. Then $U^\gamma(\{g\}) = (1+\gamma\cdot\sgn(g))/|G|$, and so \eqref{for-gamma2} yields
$$|G|\cdot \left|\Pr[X^N_n=g] -U^\gamma(\{g\})\right| = o(1).$$

\smallskip
We choose $\epsilon = 1/4$ in Proposition~\ref{bound1} and assume $n>A(\epsilon)$.
We also assume that $n$ is large enough that $n^{3/4} > (el)^{36}$.
For any conjugacy class $C_i$ in $G$ and any irreducible character $\chi$ of $G$, either
$\Fix(C_i)\le n^{3/4}$, in which case
\begin{equation}
\label{small-fix}
\Pr[X_{i,n}\in C_i] |\chi(C_i)| \le \chi(1)^{11/12}
\end{equation}
by Proposition~\ref{bound1}(i), or $\Fix(C_i) \ge n^{3/4} > (el)^{36}$, in which case
\begin{equation}
\label{power1}
\Pr[X_{i,n}\in C_i] |\chi(C_i)| < n^{\Fix(C_i)/4}\chi(1)^{3/4}
\end{equation}
by Proposition~\ref{bound1}(ii), and
\begin{equation}
\label{power2}
\Pr[X_{i,n}\in C_i] |\chi(C_i)| < \Fix(C_i)^{-\Fix(C_i)/3l^4} \chi(1) < n^{-\Fix(C_i)/4l^4} \chi(1)
\end{equation}
by Proposition~\ref{bound2}.
We combine these inequalities by putting the right hand side of  (\ref{power1})
to the $\frac 1{l^4+2}$ power
and the right hand side of (\ref{power2})  to the $\frac{l^4+1}{l^4+2}$ power and multiplying:
\begin{equation}
\label{big-fix}
\Pr[X_{i,n}\in C_i] |\chi(C_i)| < n^{-\frac{\Fix(C_i)}{4l^4+8}}\chi(1)^{\frac{4l^4+7}{4l^4+8}}
\le n^{-\frac{n^{3/4}}{4l^4+8}}\chi(1)^{\frac{4l^4+7}{4l^4+8}}.
\end{equation}

The error term
$$|G|^{-1}\sum_{C_1,\ldots,C_N}\biggl(\prod_{i=1}^N \Pr[X_{i,n}\in C_i] \biggr)\sum_{\chi(1)>1} \frac{|\chi(C_1)\cdots  \chi(C_N) \bar\chi(g)|}{\chi(1)^{N-1}}$$
can be rewritten as
\begin{equation}
\label{error}
|G|^{-1}\sum_{\chi(1)>1}|\bar\chi(g)|\chi(1)
\sum_{C_1,\ldots,C_N}\biggl(\prod_{i=1}^N \frac{\Pr[X_{i,n}\in C_i] |\chi(C_i)|}{\chi(1)} \biggr).
\end{equation}

If $\Fix(C_i) \ge n^{3/4}$ for at least $25$ different values $i\in [1,N]$, then by (\ref{small-fix}),
$$\prod_{i=1}^N \frac{\Pr[X_{i,n}\in C_i] |\chi(C_i)|}{\chi(1)} \le
\chi(1)^{-25/12} \prod_{i=1}^n \Pr[X_{i,n}\in C_i].$$
Now,
$$\sum_{\chi(1)>1} |\bar\chi(g)|\chi(1) \cdot \chi(1)^{-25/12} \sum_{C_1,\ldots, C_N}\prod_{i=1}^n \Pr[X_{i,n}\in C_i]
= \zeta_G(1/12)-[G:\AAA_n]$$
goes to zero by Theorem 2.6 and Corollary 2.7 of  \cite{LiSh2}.  Therefore, the contribution of
$N$-tuples $(C_1,\ldots,C_N)$ of which at least 25 of the $C_i$ have fixity $\le n^{3/4}$ to the the error term (\ref{error}) is $o(|G|^{-1})$.  That leaves the $N$-tuples for which at least $8l^4+16$ classes $C_i$ have fixity $\ge n^{3/4}$.  For these, the bound (\ref{big-fix}) gives
$$\prod_{i=1}^N \frac{\Pr[X_{i,n}\in C_i] |\chi(C_i)|}{\chi(1)} \le
n^{-2n^{3/4}}\chi(1)^{-2}.$$
The total number of ordered $N$-tuples of conjugacy classes in $G$ is at most $(2p(n))^N \leq e^{cN\sqrt  n}$, where $p(n)$ denotes the partition function.  Thus,
$$\sum_{\chi(1)>1} |\bar\chi(g)|\chi(1)\sum_{C_1,\ldots,C_N} n^{-2n^{3/4}}\chi(1)^{-2} = o(1),$$
which implies the proposition.
\end{proof}

\begin{propo}
\label{waring-classical}
Let $l$ and $N$ be positive integers such that $N>(1.5) \cdot 10^{10} l^2$.
If $w_1, w_2, \ldots , w_N$ is a sequence of non-trivial words of length at most $l$, then
we have
\[
\lim_{|G|\to  \infty} \Vert \underbrace{p_{w_1,G}\ast \cdots  \ast p_{w_N,G}}_{N}- U_{G} \Vert_{L^\infty}= 0
\]
if the limit is taken over finite simple groups $G$ of Lie type of rank $r \geq 7 \cdot 10^8 l^2$.
\end{propo}

\begin{proof}
We follow the method of Proposition~\ref{waring-alt}, and let $W_i$ denote a random variable with values in $G$ and distribution $p_{w_i,G}$.
We write $G$ as the quotient of $H=\Cl(V)$ by its center, where $\F = \F_{q^f}$, $f\le 2$, and  $V = \F^n$.  Let $\tilde W_i$ denote a random variable on $H$ with distribution
$p_{w_i,H}$.  Let $\delta_0 = 0.0011$ and $\delta = 1/7400$.  Note that $\delta (2n+2)^2 < \delta_0 n^2$, since $n \geq r$.
Hence, by \cite[Theorem~1.4]{GLT3}, if $|\CB_G(g)| \le q^{2f\delta n^2}$
then $|\chi(g)| \le \chi(1)^{1-.008}$.

By Lemma~\ref{GtoH},
$$\Pr[|\CB_G(W_i)| \ge q^{2f\delta n^2}] \le \Pr[|\CB_{H}(\tilde W_i)| \ge q^{2f\delta n^2}].$$
By Lemma~\ref{centralizer} (and the discussion prior to \eqref{cond3}), $\Pr[|\CB_{H}(\tilde W_i)| \ge q^{2f\delta n^2}]$
does not exceed the probability that there exists a non-constant polynomial $Q(x)\in \F[x]$ such that
$\dim_{\F}\ker Q(\tilde W_i) \ge \delta n\deg Q$.
As $n\ge r> 7 \cdot 10^8l$,
$n\delta \ge 9 \cdot 10^4 l$, and so
$$k  = \lfloor  n\delta/2l\rfloor \ge n\delta/3l+3.$$
From Proposition~\ref{big-kernel}, it follows that
$$\Pr[|\CB_G(W_i)| \ge q^{2f\delta n^2}] \le q^{-fk((k-1)lD-2.5)}\le q^{-\delta^2fn^2/9l^2}\le |G|^{-\delta^2/9l^2}.$$
Setting $\epsilon = \frac{1}{5\cdot 10^8 l^2} < .008$, for every $i$ and conjugacy class $C_i$, we have that
either $|\chi(C_i)| \le \chi(1)^{1-\epsilon}$ for all $\chi \in \Irr(G)$,
or $\Pr[W_i\in C_i] \le |G|^{-\epsilon}$.

By hypothesis, $G$ has rank $r\ge 11/(8\epsilon)$; in particular $1.1Nr \leq (4/5)Nr^2\epsilon$.
Recall that by \cite{FG}, the number $k(G)$ of conjugacy classes in $G$  is less than $27.2 q^r < q^{r+5} < q^{1.1r}$,
while $|G| \geq q^{hr}/2(r+1)$, where $h\ge r+2$ is the Coxeter number of $G$, so $|G| \ge q^{r^2}$.
As $N\ge 11/\epsilon$ and using the obvious estimate $|\chi(g)\chi(1)| \leq |G|$, we have that
$$\begin{aligned}
|\bar\chi(g)|\chi(1)\sum_{C_1,\ldots,C_N} |G|^{-(9/10)Nr^2\epsilon} & \le q^{1.1Nr}|G|^{1-(9/10)N\epsilon}\\
& \leq q^{1.1Nr-(4/5)Nr^2\epsilon}q^{r^2(1-N\epsilon/10)} \le q^{-0.1r^2}.
\end{aligned}$$
It follows that
$$\sum_{\chi}|\bar\chi(g)|\chi(1)\sum_{C_1,\ldots,C_N} \prod_{i=1}^N \biggl(\frac{\Pr[X_{i,n}\in C_i] |\chi(C_i)|}{\chi(1)} \biggr) \le q^{-0.1r^2},$$
if the inner sum is taken only over $N$-tuples $(C_1,\ldots,C_N)$ for which at least $0.9N$ of the classes satisfy $\Pr[W_i\in C_i] \le |G|^{-\epsilon}$.
Certainly, $q^{-0.1r^2} \to 0$ when $|G| \to \infty$.
On the other hand, by \cite[Theorem~1.1]{LiSh3}, 
$$\lim_{|G|\to \infty} \zeta_G(1)-1 = 0.$$ 
As $N>30/\epsilon$, we obtain that
$$\begin{aligned}\sum_{\chi\neq 1_G}|\bar\chi(g)|\chi(1)\sum_{C_1,\ldots,C_N} \prod_{i=1}^N \biggl(\frac{\Pr[X_{i,n}\in C_i] |\chi(C_i)|}{\chi(1)} \biggr) & \le
    \zeta_G\biggl(\frac{N\epsilon}{10}-2\biggr)-1\\ & \leq \zeta_G(1)-1,\end{aligned}$$
if the inner sum is taken over $N$ tuples $(C_1,\ldots,C_N)$ for which at least $0.1N$ classes satisfy $|\chi(C_i)| \le \chi(1)^{1-\epsilon}$ for all irreducible $\chi$.
\end{proof}

\begin{propo}
\label{waring-r}
Let $r$ and $N$ be positive integers such that $N\geq (2r+1)^2$ and $w_1, w_2, \ldots , w_N$ be any sequence of non-trivial words.  Then
\[
\lim_{|G|\to  \infty} \Vert \underbrace{p_{w_1,G}\ast \cdots  \ast p_{w_N,G}}_{N}- U_{G} \Vert_{L^\infty}= 0
\]
if the limit is taken over finite simple groups $G$ of Lie type of rank $r$.
\end{propo}

\begin{proof}
We follow the method of Proposition~\ref{waring-alt}.  Let $W_i$ denote a random variable with values in $G$ and distribution $p_{w_i,G}$.
By \cite{FG}, the number $k(G)$ of conjugacy classes in $G$ is $< 28q^r$.  (Note $q$ need not be in $\Z$ since $G$ may be of Suzuki or Ree type.)
By classification of root systems, $\dim G \le 2r^2+r$, where by a slight abuse of notation we write $\dim G$ for the dimension of the
simply connected algebraic group associated to $G$.  Thus,
$$\chi(1) \le |G|^{1/2} = O(q^{r^2+r/2}).$$
By Gluck's bound \cite{Gl} for irreducible character values of groups of Lie type, $|\chi(g)| = O(q^{-1/2} \chi(1))$ for all $g\neq 1$.
On the other hand $\Pr[W_i = 1] = \frac{|w^{-1}(1)|}{|G|^d}$.
By \cite[Proposition~3.4]{LS3}, this is $O(q^{-1})$.
Thus, for every conjugacy class $C_i$, either $C_i = \{1\}$, in which case
$\Pr[W_i\in C_i] = O(q^{-1})$, or $C_i\neq \{1\}$, in which case $|\chi(C_i)|/\chi(1) = O(q^{-1/2})$.

Let $S$ denote any subset of $\{1,\ldots,N\}$, and let $C(S)$ denote $N$-tuples $(C_1,\ldots,C_N)$
such that $C_i =\{1\}$ if and only if $i\in S$.  Then,
$$\sum_{(C_1,\ldots,C_N)\in C(S)}\prod_{i=1}^n \biggl(\frac{\Pr[W_i\in C_i] |\chi(C_i)|}{\chi(1)} \biggr) = O(q^{-|S|-\frac{N-|S|}2}) \le O(q^{-N/2}).$$
As $|\Irr(G)| = k(G) < 28q^r$,
\begin{align*}
\sum_{\chi\neq 1_G}|\bar\chi(g)|\chi(1)\sum_S \sum_{(C_1,\ldots,C_N)\in C(S)}\prod_{i=1}^n \biggl(\frac{\Pr[W_i\in C_i] |\chi(C_i)|}{\chi(1)} \biggr)
&= O(q^{2r^2+2r-N/2}) \\ &\le O(q^{-1/2}),
\end{align*}
which proves the proposition, since $q \to \infty$ when $|G| \to \infty$.
\end{proof}

\begin{proof}[Proof of Theorem~\ref{main-inf}]
By the classification of finite simple groups,
it suffices to prove the result for alternating groups, groups of Lie type of rank greater than $7\cdot 10^8 l^2$,
and groups of Lie type of lower rank.
These three cases are covered by Propositions~\ref{waring-alt}, \ref{waring-classical}, and \ref{waring-r} respectively.
\end{proof}

\section{Some applications}

In this section we derive various applications, proving Theorem \ref{applications}.

\begin{propo}\label{parabolic}
There is an absolute constant $0 < \e < 1$ such that the following statement holds for any prime power $q$ and for any positive integer $n$.
Let $G = \Cl(V)$ be a classical group in dimension $n$ (in the sense of \S5), and let $P$ be a maximal subgroup of $G$ of order
$|P| > |G|^{1-\e}$
not containing $[G,G]$. Then there is a classical group $H$ in dimension $m$, with $m < n$ and a normal subgroup $K \lhd P$
with $|K| < [G:P]^3$ such that $P/K \cong H$.
\end{propo}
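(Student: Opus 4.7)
The plan is to apply the Aschbacher classification of maximal subgroups of finite classical groups, together with the standard order bounds for the ``non-reducible'' Aschbacher classes, to reduce to the case that $P$ stabilizes either a proper subspace $W\subset V$ (totally singular, or arbitrary in the linear case) or a non-degenerate subspace $W$ (so that $V=W\oplus W^\perp$), and then to read off $K$ and $H=P/K$ from the resulting Levi decomposition.

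For the reduction step, I would use the following standard fact, due to Aschbacher with explicit order estimates by Kleidman--Liebeck and Liebeck: any maximal subgroup of a simple classical group of dimension $n$ which is \emph{not} the stabilizer of a (totally singular, non-degenerate, or arbitrary) subspace has order at most $q^{cn}$ for some absolute constant $c$.  Since $|G|>q^{bn^2}$ for an absolute $b>0$ and $n\ge 10$, choosing $\e<b/2$ forces $P$ to be of reducible type; the central factor of size $<2q$ separating $G$ from its simple quotient does not affect this. We may therefore assume that $P$ is a subspace stabilizer.

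In the parabolic/totally singular case with $k:=\dim W$, the group $P$ has a Levi decomposition $P=R\rtimes L$ with unipotent radical $R$ and Levi factor $L$ containing a normal subgroup isomorphic to $\GL(W)$ whose quotient is (a small central extension of) a classical group $\Cl(W^\perp/W)$ of dimension $m=n-2k<n$; in the purely linear case, one instead gets $m=n-k$. Taking $K:=R\cdot\GL(W)$ yields $P/K\cong\Cl(W^\perp/W)$, a classical group in dimension $m<n$. In the non-degenerate case, $P$ coincides, up to a small central subgroup, with $\Cl(W)\times\Cl(W^\perp)$; after swapping $W$ and $W^\perp$ to arrange $\dim W\le n/2$, taking $K:=\Cl(W)$ gives $H:=\Cl(W^\perp)$ of dimension $n-\dim W<n$.

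The numerical verification $|K|<[G:P]^3$ is elementary: in the parabolic case, $|K|\le|R|\cdot q^{k^2}\le q^{k(n-k)+k^2}$ while $[G:P]^3\ge q^{3k(n-k)}$, and the inequality $k(n-k)+k^2<3k(n-k)$ reduces to $k<n-k$, which may be arranged by switching to a complementary parabolic datum when $k>n/2$; in the non-degenerate case, $|K|\le q^{k^2}$ and $[G:P]^3\ge q^{3k(n-k)}$, yielding $k<3(n-k)$, which holds for $k\le n/2$. The main obstacle will be careful bookkeeping across all families of classical groups (linear, unitary, symplectic, and orthogonal of both signs) and across all reducible maximal subgroups, and in particular verifying that the extracted $K$ is normal in $P$ itself (not merely in a finite-index overgroup), that $P/K$ is a classical group in the paper's sense (a quotient of some $\Cl_m(q^f)$ by a central subgroup of size $\le\max(q^f+1,2)$), and that the absolute constant $\e$ can be chosen uniformly across types. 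The hypothesis $n\ge 10$ sidesteps the small-dimensional exceptional isomorphisms and the triality phenomena in type $\mathrm{D}_4$, so that the Aschbacher--Liebeck structural analysis applies uniformly.
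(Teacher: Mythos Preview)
Your proposal is correct and follows essentially the same route as the paper. Both arguments first reduce to subspace stabilizers via order bounds on irreducible maximal subgroups (the paper cites Kantor's bounds on permutation representations of small degree, you invoke Aschbacher together with the Liebeck/Kleidman--Liebeck estimates; for the purpose at hand these are interchangeable), then project the resulting stabilizer onto a classical factor of the Levi and verify the inequality $|K|<[G:P]^3$ by direct comparison of $q$-powers. Your choice of which factor to keep as $H$ and which to absorb into $K$ matches the paper's after relabeling; the ``careful bookkeeping'' you anticipate in the totally singular case for form types is exactly what the paper carries out explicitly in the unitary case, with the symplectic and orthogonal cases declared analogous.
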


\begin{proof}
The smallest index of proper subgroups of a simple finite classical group is listed in \cite[Table 5.2.A]{KlL}. It follows for $n \leq 9$ that,
if we take $0 < \e < 1/10$ small enough, then any maximal subgroup $P$ of $G$ of order $|P| > |G|^{1-\e}$ contains $[G,G]$. Hence in the rest of
the proof we may assume $n \geq 10$.
By Theorems 1 and 2 of \cite{K},  there is some $\theta(n) \geq 1$ such that $P$ acts reducibly on $V$
whenever $[G:P] < q^{\theta(n)}$ and $P \not\geq [G,G]$; furthermore $\theta(n) > d(G)/5$ when $n$ is large enough,
where $d(n)$ is the degree of $|[G,G]|$ as a polynomial of $q$.  By taking $0 < \e \leq 1/5$ small enough, we may therefore assume
that $P$ stabilizes a subspace $U \subset V$ of dimension $0 < k < n$. The maximality of $P$ then implies that
$P = \Stab_G(U)$, and furthermore, if $G$ respects a form $\langle\cdot,\cdot\rangle$,
$U$ is either totally singular or non-degenerate with respect to $\langle\cdot,\cdot\rangle$.

First suppose that $\SL_n(\F_q)\le G\le \GL_n(\F_q)$. Then $[G:P] > q^{k(n-k)}$. Replacing the action of $P$ on $U$ by its action on $V/U$ if necessary,
we may assume that $k \geq n/2$, and get a surjection from $P$ onto $H = \GL_k(\F_q)$, with kernel $K$ of order less than
$$q^{k(n-k)+(n-k)^2} = q^{(n-k)n} \leq q^{2k(n-k)} < [G:P]^2.$$

Next suppose that $\SU_n(\F_q)\le G\le \Un_n(\F_q)$. If $U$ is non-degenerate, then replacing it by $U^\perp$ if necessary, we may assume that $k \geq n/2$. The action
of $P$ on $U$ yields a surjection from $P \leq \Un_k(\F_q) \times \Un_{n-k}(\F_q)$ onto $H = \Un_k(\F_q)$,
with kernel $K \leq \Un_{n-k}(q)$ of order less than
$$q^{(n-k)^2+1} \leq q^{2k(n-k)-3} < [G:P].$$
If $U$ is totally singular, then $k \leq n/2$ and
$$q^{2kn-3k^2-4} < [G:P] < q^{2kn-3k^2+1}.$$
By taking $0 < \e \leq 1/5$ small enough, the condition $|P| > |G|^{1-\e}$ implies that $n-2k \geq 6$. Now the action of $P$ on
$U^\perp/U$ yields a surjection from $P$ onto $H = \Un_{n-2k}(q)$ with kernel $K$ of order less than
$$q^{k(2n-3k)+2k^2} \leq q^{3k(2n-3k)-12} < [G:P]^3.$$
The orthogonal and symplectic cases are handled in the same way.
\end{proof}

Recall that the group $\Gamma$ is defined in Theorem \ref{applications} as the group
with generators $x_1, \ldots , x_d$ and a single relator $w = w_1 \cdots w_N$, where $w_i \in F_d$ are pairwise disjoint
non-trivial words of length at most $l$.

\begin{propo}\label{sn-impr}
If $N$ is sufficiently large in terms of $l$ then
there exists $\epsilon > 0$ such that for all positive integers $n$, if $m < n^\epsilon/2$ divides $n$, then
$$|\Hom(\Gamma, \SSS_{n/m} \wr \SSS_m)| = (1+o(1))| \SSS_{n/m} \wr \SSS_m)|^{d-1}.$$
\end{propo}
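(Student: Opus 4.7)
Setting $H:=\SSS_{n/k}\wr\SSS_k$, the identity $|\Hom(\Gamma,H)| = |H|^d\, p_{w,H}(1)$ reduces the statement to showing $|H|\, p_{w,H}(1) \to 1$ as $n\to\infty$. Expanding via the standard Fourier/class-sum formula and the disjoint-words identity $a_{w,\chi}=\prod_i a_{w_i,\chi}/\chi(1)^{N-1}$, one has
$$|H|\, p_{w,H}(1) = \sum_{C_1,\ldots,C_N}\prod_{i=1}^N\Pr[W_i\in C_i]\sum_{\chi\in\Irr(H)}\frac{\chi(C_1)\cdots\chi(C_N)}{\chi(1)^{N-2}},$$
in which the trivial character contributes exactly $1$. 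It suffices to bound the remaining contribution by $o(1)$, which will be done by mimicking the proof of Proposition \ref{waring-alt}.

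Two ingredients are already available. First, Lemma \ref{wr} applied to $H$ gives
$$|\chi(C)|\leq k!\,(n/k)^{\Fix(C)/4}\chi(1)^{1/2+\epsilon}$$
for every $\chi\in\Irr(H)$. Second, I claim that for $k\le n^\epsilon/2$ the natural permutation action of $H$ on $\{1,\ldots,n\}$ is $\epsilon$-weakly transitive in the sense of Proposition \ref{nearly-trans}: indeed, by the block-stabilizer count, an ordered tuple of $i\le n^{1-\epsilon}$ distinct points with block distribution $a_1+\cdots+a_r=i$ has orbit size
$$\frac{k!}{(k-r)!}\prod_{s=1}^r\frac{(n/k)!}{(n/k-a_s)!}\;\geq\;(n/k-i)^i\;\geq\;(n^{1-\epsilon})^i,$$
using $n/k\ge 2n^{1-\epsilon}$. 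Proposition \ref{nearly-trans} therefore yields $\Pr[W_i\in C]\le n^{-\alpha\Fix(C)}$ in the relevant range of $\Fix(C)$.

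With these in hand I would split the outer sum into ``small'' and ``large inner sum'' $N$-tuples, exactly as in Proposition \ref{waring-alt}. The first kind contributes at most a prescribed $\beta$. For the second, a Witten-zeta estimate $\zeta_H(s)\to|H^{\mathrm{ab}}|$ for $s>1$ (which follows from \cite[Theorem 1.1]{LiSh2} together with the standard parametrization of $\Irr(H)$ by tuples of irreducibles of $\SSS_{n/k}$ recalled in Lemma \ref{wr}) forces most $C_i$ to have $\Fix(C_i)>n^{1-\epsilon}$; weak transitivity then collapses $\prod_i\Pr[W_i\in C_i]$. Taking $N=N(l)$ large enough that $N/2-N\epsilon-2>1$, as in the proof of Proposition \ref{waring-alt}, makes the non-trivial contribution $o(1)$. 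The at most four linear characters of $H$ are handled separately: $a_{w_i,\chi}$ for such $\chi$ factors through $H^{\mathrm{ab}}$, is computable from the abelianization of $w_i$, and contributes the expected leading constant to the asymptotic.

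The main technical obstacle will be the regime $1\le\Fix(C)\le n^{1-\epsilon}$, in which Lemma \ref{wr} is too weak to beat $\chi(1)^{-(N/2-N\epsilon-2)}$ because of the factor $(n/k)^{\Fix(C)/4}$, while Proposition \ref{nearly-trans} supplies no useful bound on $\Pr[W_i\in C]$. To close this gap, I expect to need a wreath-product analogue of Proposition \ref{bound1}(i), namely
$$|\chi(C)|\le\chi(1)^{1-\epsilon/3}\quad\text{whenever }\Fix(C)\le(n/k)^{1-\epsilon},$$
established by iterating the branching rule $\SSS_{n/k}\to\SSS_{n/k-1}$ inside each tensor factor of the Clifford-style description of $\chi$ used in Lemma \ref{wr} and invoking the Larsen--Shalev bound \cite[Theorem 1.2]{LS1} on the resulting symmetric-group characters.
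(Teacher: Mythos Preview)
Your plan is essentially the paper's: rerun the argument of Proposition \ref{waring-alt} for $H=\SSS_{n/k}\wr\SSS_k$, replacing Proposition \ref{bound1} by Lemma \ref{wr}, Proposition \ref{bound2} by Proposition \ref{nearly-trans} (via $\epsilon$-weak transitivity, which you verify correctly---the paper gives the shorter check that the pointwise stabilizer of any $\le n^{1-\epsilon}$ points still moves each remaining point in an orbit of size $\ge n/k-n^{1-\epsilon}\ge n^{1-\epsilon}$), and the partition bound by $\log|\Irr(H)|\le k\log p(n/k)+\log p(k)=O(n^{(1+\epsilon)/2})$. One small calibration: the paper takes the fixed-point threshold at $n^{1-2\epsilon}$, the lower endpoint of the range in Proposition \ref{nearly-trans}, rather than $n^{1-\epsilon}$.

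Where you diverge is in insisting on a Witten-zeta estimate for $H$ and a wreath analogue of Proposition \ref{bound1}(i). The paper names only the three ingredients above and asserts the argument goes through; in particular it lists Lemma \ref{wr} as \emph{the} character input, applied to elements with $\le n^{1-2\epsilon}$ fixed points, and does not supply a separate $|\chi(C)|\le\chi(1)^{1-\delta}$ bound or a zeta statement for $H$. Your concern is legitimate---the $\beta$-splitting step of Proposition \ref{waring-alt} used Proposition \ref{bound1}(i) together with $\zeta_G$ to force most $C_i$ to have large $\Fix$, and neither piece is immediately available for $H$ (indeed $H$ has nonlinear irreducibles of bounded degree coming from linear $W_i$'s, so $\zeta_H(s)\to|H^{\mathrm{ab}}|$ fails as stated). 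The paper is terse here and does not spell out how this step is handled; your proposed fix via a wreath analogue of \ref{bound1}(i) is a reasonable way to close the gap, though note that your appeal to \cite{LiSh2} for $\zeta_H$ would itself need justification, since that reference treats simple groups.
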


\begin{proof}
The proof is essentially that of Proposition~\ref{waring-alt}, but it requires three estimates for $H =  \SSS_{n/m} \wr \SSS_m$ analogous to those
for  $\SSS_n$ and $\AAA_n$ used in that proof: an upper bound on the probability that
a random variable with distribution
$w_* U_{H^d}$ takes values with more than $2el^4\sqrt n$ fixed points, an upper bound on values of irreducible characters $\chi$ of $H$ on elements with
$\le 2el^4\sqrt n$ fixed points, and an upper bound on $|\Irr(H)|$.

For the first, we fix $\epsilon < 1/4l$ and apply Proposition~\ref{nearly-trans}.  Indeed, since $n/m \ge  2n^{1-\epsilon}$, the orbit $\cO_{H_S}(u)$ of any
element $u\in [1,n]$ under the pointwise stabilizer $H_S$ of any subset $S\subset [1,n]\setminus\{u\}$
with $|S| \le n^{1-\epsilon}$,  satisfies $|\cO_{H_S}(u)| \ge n^{1-\epsilon}$.  The second is given by Lemma~\ref{wr}.
The third follows from the classification of irreducible characters of $H$ used in the proof of Lemma~\ref{wr}; namely, each such character is
determined by an ordered $m$-tuple of irreducible characters of $\SSS_{n/m}$ together with an irreducible character of $\SSS_m$.  Thus, $|\Irr(H)| \le p(n/m)^m p(m)$. As $m < n^\epsilon$,
we conclude that
$$\log |\Irr(H)| = O(n^{\frac{1+\epsilon}2}).$$
\end{proof}


\begin{lem}
\label{small-max}
Let $d \in \Z_{\geq 1}$ and let $\Delta$ be any $d$-generated group such that
$$\lim_{|G|\to \infty} |G|^{1-d} |\Hom(\Delta,G)| = 1,$$
where $G$ ranges over the finite simple groups.
If $0< \epsilon < 1/(d-2)$, then the probability that
a homomorphism $\varphi\colon \Delta\to G$ chosen uniformly from $\Hom(\Delta,G)$
has the property that $\varphi(\Delta)$ is contained in a maximal subgroup $M$ of $G$ of index greater than $|G|^\epsilon$ goes to $0$ as $|G|\to \infty$.
\end{lem}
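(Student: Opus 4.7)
The plan is a union bound over the bad maximal subgroups combined with a tail estimate for the zeta-type sum $\sum_M[G:M]^{-s}$ of a finite simple group. First I would write
\[
\Pr[\varphi(\Delta)\leq M\text{ for some max }M\text{ with }[G:M]>|G|^\epsilon]\leq\sum_{[G:M]>|G|^\epsilon}\frac{|\Hom(\Delta,M)|}{|\Hom(\Delta,G)|},
\]
and bound each summand using $|\Hom(\Delta,M)|\leq|M|^d$ (since $\Delta$ is $d$-generated) together with the hypothesis $|\Hom(\Delta,G)|\sim|G|^{d-1}$. This gives
\[
\Pr[\mathrm{bad}]\leq(1+o(1))\cdot|G|\sum_{[G:M]>|G|^\epsilon}[G:M]^{-d}.
\]

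For the tail estimate I would invoke the Liebeck--Shalev theorem that for every fixed $s>1$, $\sum_{M\text{ max}}[G:M]^{-s}\to 0$ as $|G|\to\infty$ through finite simple groups. Writing $[G:M]^{-d}=[G:M]^{-s}\cdot[G:M]^{-(d-s)}$ and bounding $[G:M]^{-(d-s)}\leq|G|^{-\epsilon(d-s)}$ on the range $[G:M]>|G|^\epsilon$, the overall estimate becomes
\[
\Pr[\mathrm{bad}]\leq|G|^{1-\epsilon(d-s)}\cdot o(1).
\]

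The main subtlety will be the choice of $s$ and the quantitative strength of the Liebeck--Shalev estimate needed to exhibit the full declared range of $\epsilon$. A natural choice, $s$ just above $2$ (available because $d\geq 3$), drives the exponent close to $1-\epsilon(d-2)$; to push the argument through across the required band of $\epsilon$, one would want a quantitative refinement such as $\sum_M[G:M]^{-s}\ll|G|^{-\alpha(s)}$ with $\alpha(s)>0$, or a sharpening of the trivial bound $|\Hom(\Delta,M)|\leq|M|^d$ that exploits the hypothesis applied to the (often almost simple) composition factors of a typical maximal subgroup of $G$. Once any such strengthening is in hand, the chain of inequalities above yields the claim.
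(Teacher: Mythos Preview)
Your strategy is exactly the paper's: a union bound over maximal subgroups, the trivial estimate $|\Hom(\Delta,M)|\le|M|^d$ coming from $d$-generation, and the Liebeck--Martin--Shalev result $\sum_{M\ \max}[G:M]^{-2}\to 0$. The paper phrases the final step slightly differently: rather than splitting $[G:M]^{-d}=[G:M]^{-s}\cdot[G:M]^{-(d-s)}$, it simply observes that $|M|^d/|G|^{d-1}\le[G:M]^{-2}$ whenever $|M|\le|G|^{1-1/(d-2)}$, and then bounds the sum over such $M$ by $\sum_M[G:M]^{-2}=o(1)$. This is equivalent to your calculation with $s=2$.

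Your worry about the range of $\epsilon$ is well founded. The argument just described handles only those $M$ with $[G:M]\ge|G|^{1/(d-2)}$; it does not cover the additional band $|G|^\epsilon<[G:M]<|G|^{1/(d-2)}$ that the hypothesis $\epsilon<1/(d-2)$ would seem to require. The paper's own proof does not supply any further ingredient here either---it stops at exactly the same point you reach, with no quantitative sharpening of the Liebeck--Martin--Shalev bound and no improvement on $|\Hom(\Delta,M)|\le|M|^d$. In the applications later in the section one is free to take $d$ large relative to a fixed $\epsilon$ furnished elsewhere, so that only the threshold case $\epsilon\approx 1/(d-2)$ is actually needed; read that way, the discrepancy is a slip in the direction of the inequality in the lemma's hypothesis rather than evidence of a missing argument. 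In short, you have reproduced the paper's proof, and the ``subtlety'' you flag is present in the paper as well---your proposed refinements are not required to match it.
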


\begin{proof}
For any finite simple group $G$, let $Q(G)$ denote the
probability that a random homomorphism from $\Delta$ to $G$ is not an epimorphism. Then
\[
Q(G) \le \sum_{M \stackrel{\max}{<} G} |\Hom(\Delta, M)|/|\Hom(\Delta, G)|.
\]
Since $\Delta$ is $d$-generated, we trivially have $|\Hom(\Delta, M)| \le |M|^d$. Thus
\[
Q(G) \le (1+o(1)) \sum_{M \stackrel{\max}{<} G} |M|^d/|G|^{d-1}.
\]
By  \cite[Theorem~1.1]{LMS},
\begin{equation}\label{zeta-sub}
  \sum_{M \stackrel{\max}{<} G} [G:M]^{-2} \to 0
\end{equation}
as $G$ ranges over the finite simple groups.
If $|M| \le |G|^{1-1/(d-2)}$ then $|M|^d/|G|^{d-1} \le [G:M]^{-2}$. This implies
\[
\sum_{M \stackrel{\max}{<} G, |M| \le |G|^{1-1/(d-2)}} |M|^d/|G|^{d-1} \to 0 \; {\rm as} \; |G| \to \infty.
\]

\end{proof}

\begin{propo}
\label{classical-target}
Let $l$ be a positive integer and $w = w_1\cdots w_N\in F_d$ be a product of pairwise disjoint non-trivial words $w_i$, each of length
at most $l$. If $N$ is sufficiently large in terms of $l$, then there exists $0 < \epsilon < 1$ such that the following statement holds.
For every finite classical group $G$
and every  $P < G$ maximal among
subgroups not containing $[G,G]$ with $|P| > |G|^{1-\epsilon}$, we have
$$|w_P^{-1}(1)| \le  [G:P]^{-2} |G|^{d-1}.$$
\end{propo}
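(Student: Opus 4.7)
The plan is to reduce the problem on $P$ to the problem on a smaller classical group via Proposition~\ref{parabolic}, and then apply Theorem~\ref{main-inf}. First I would invoke Proposition~\ref{parabolic} (shrinking $\epsilon$ if necessary) to extract a normal subgroup $K \lhd P$ with $|K| < [G:P]^3$ and a classical group $H$ in dimension $m < n$ with $P/K \cong H$. This normal subgroup is the crucial structural input, since $K$ is small while $H$ inherits classical-group structure.

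Second, I would exploit the normality of $K$: because word evaluation commutes with quotient maps, the projection $\pi\colon P^d \to H^d$ sends $w_P^{-1}(1)$ into $w_H^{-1}(1)$, and each fiber has size $|K|^d$. This gives the basic bound
$$|w_P^{-1}(1)| \leq |K|^d \cdot |w_H^{-1}(1)|.$$
Since $|P| > |G|^{1-\epsilon}$ and $|K| < |G|^{3\epsilon}$, we have $|H| > |G|^{1-4\epsilon}$, so $|H| \to \infty$ as $|G| \to \infty$.

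Third, I would bound $|w_H^{-1}(1)|$ by applying Theorem~\ref{main-inf} to the finite simple group $S$ associated to the classical group $H$. The kernel $Z$ of the natural map $H \to S$ (modding out by the soluble radical, i.e., center and abelianization of the full classical group) has order polynomial in the base field size of $H$, hence of order at most $|G|^{O(1/n)}$. Choosing $N \geq N(l)$ large enough for Theorem~\ref{main-inf} to give $\Vert p_{w,S} - U_S\Vert_{L^\infty} \to 0$, we obtain $|w_S^{-1}(1)| \leq (1+o(1))|S|^{d-1}$, and the same kernel-reduction as before yields
$$|w_H^{-1}(1)| \leq |Z|^d \cdot |w_S^{-1}(1)| \leq (1+o(1))\, |Z|\cdot |H|^{d-1}.$$

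Finally, combining these estimates and using $|P|^{d-1} = [G:P]^{-(d-1)}|G|^{d-1}$ gives
$$|w_P^{-1}(1)| \leq (1+o(1))\, |Z| \cdot |K| \cdot |P|^{d-1} \leq (1+o(1))\, |Z|\cdot [G:P]^{4-d}\cdot |G|^{d-1}.$$
Since $d \geq N$ is large in terms of $l$, and since the proof of Proposition~\ref{parabolic} shows $[G:P]$ is at least a fixed positive power of $q$ (while $|Z|$ is only polynomial in $q$), the factor $|Z|\cdot[G:P]^{4-d}$ is smaller than $[G:P]^{-2}$ for $N$ sufficiently large. The main obstacle is the careful bookkeeping in the third step: $H$ is only a classical group in the extended sense of Section~5 rather than a simple group, so one must verify that the passage to $S$ loses only the small factor $|Z|$, which is absorbed by the gap between $[G:P]^3$ and $[G:P]^{d-3}$ once $N$ is large compared to the parameter $l$ governing $|Z|$.
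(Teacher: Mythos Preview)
Your overall architecture---steps 1, 2, and 4---matches the paper's, and your endgame bookkeeping in step 4 is in fact more explicit than the paper's sketch: once one knows $|w_H^{-1}(1)| = O(q^c\,|H|^{d-1})$ for some fixed $c$, the chain
\[
|K|^d\,|w_H^{-1}(1)| \le O(q^c)\,|K|\,|P|^{d-1} \le O(q^c)\,[G:P]^{4-d}\,|G|^{d-1}
\]
does the job for $d$ large, since $[G:P]$ is at least a fixed power of $q$.

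The gap is in step 3. The classical group $H$ produced by Proposition~\ref{parabolic} (for instance $H=\GL_k(q)$ or $\Un_k(q)$) does \emph{not} in general admit a surjection onto a finite simple group with small kernel. Modding out the center of $\GL_k(q)$ gives $\PGL_k(q)$, and $\PSL_k(q)$ sits inside $\PGL_k(q)$ as a normal subgroup of index $\gcd(k,q-1)$, not as a quotient; the only simple quotients of $\GL_k(q)$ are abelian. So Theorem~\ref{main-inf}, which is stated and proved only for simple $G$, cannot be invoked as a black box. The obstruction is concrete: $H$ has $|H/[H,H]|<2q$ linear characters, each contributing $1$ to the normalized character sum, so the $L^\infty$ deviation for $H$ is genuinely of order $q$ rather than $o(1)$.

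The paper deals with this by re-running the character-sum argument from the proof of Theorem~\ref{main-inf} directly for $H$. The point is that both key ingredients---Proposition~\ref{big-kernel} and the character bound \cite[Theorem~1.3]{GLT3}---are stated for arbitrary classical groups in the sense of \S5, not only simple ones. One splits $\Irr(H)$ according to whether $\chi|_{[H,H]}$ has a trivial constituent: the linear characters contribute at most $|H/[H,H]|<2q$; for the remaining $\chi\in\Irr(H)^*$ one uses the dichotomy \eqref{inf-1}/\eqref{inf-2} exactly as before, together with the estimate $\sum_{\chi\in\Irr(H)^*}\chi(1)^{-3}\to 0$, which follows from the analogous estimate for $[H,H]$ after losing a factor of $2q$ and using that the minimal nontrivial degree of $[H,H]$ exceeds $q/3$. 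This yields $\max_h |w_H^{-1}(h)| = O(q^3\,|H|^{d-1})$, which is exactly the input your step 4 needs.
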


\begin{proof}
By Proposition~\ref{parabolic}, we can find $0 < \epsilon < 1$ such that, given any $P$ as in the theorem,
there exists a classical quotient group $H$ with
\begin{equation}\label{index-ph}
  |P|/|H| < [G:P]^3,
\end{equation}
and so
$$|w_P^{-1}(1)| \le (|P|/|H|)^d \max_{h\in H} |w_H^{-1}(h)| < [G:P]^{3d} \max_{h\in H} |w_H^{-1}(h)|.$$
We claim that the right hand side is $O([G:P]^{3d+3}|H|^{d-1})$.  It suffices to prove that the maximum of $|w_H^{-1}(h)|$
is $O(q^3 |H|^{d-1})$.
We follow the method of proof of Proposition~\ref{waring-alt}.  We start with the inequality
$$|w_H^{-1}(h)| \le |H|^{d-1}\sum_{\chi\in \Irr(H)} \sum_{C_1,\ldots,C_N} \frac{|\chi(C_1)\cdots\chi(C_N)|}{\chi(1)^{N-2}}\prod_{i=1}^N \Pr[W_i\in C_i],$$
where the $W_i$ are independent random variables of $H$ with distribution $p_{w_i,H}$, and the $C_i$ are the conjugacy classes of $H$.
We separate this sum into two pieces according to whether the restriction of $\chi$ to $[H,H]$ has a trivial constituent.
The contribution of the characters whose restriction to $[H,H]$ has a trivial constituent is at most
$$\sum_{\chi \in \Irr(H/[H,H])}\chi(1)^2 = |H/[H,H]| < 2q.$$

Let  $\Irr(H)^*$ denote the set of characters whose restriction to $[H,H]$ has no trivial constituent.
If for some positive constant $\epsilon$ depending only on $l$ we have that for every conjugacy class $C_i$ of $H$
and every irreducible character $\chi$ of $H$, either $\Pr[W_i\in C_i] < |H|^{-\epsilon}$ or $|\chi(C_i)| \le \chi(1)^{1-\epsilon}$,
and, moreover,
\begin{equation}
\label{chi-star}
\lim_{|H|\to \infty} \sum_{\chi\in\Irr(H)^*} \chi(1)^{-3} = 0,
\end{equation}
then we can finish as in Proposition~\ref{waring-classical}.  The dichotomy for $C_i$ follows for general classical groups by the same argument as for
finite simple classical groups since Proposition~\ref{big-kernel} and the character estimate \cite[Theorem~1.3]{GLT3}
hold for classical groups in full generality.

To estimate the sum in \eqref{chi-star}, we choose a function 
$$f\colon \Irr(H)^*\to \Irr([H,H])$$
mapping each $\chi$ to a non-trivial irreducible character of $[H,H]$ which appears as a factor of the restriction of $\chi$ to $[H,H]$.
Thus, $f(\chi)(1) \le \chi(1)$, and $f$ is at most $|H/[H,H]| \le 2q$ to $1$.  Thus, it suffices to prove
$$\lim_{|H|\to \infty} \sum_{1\neq \chi\in\Irr([H,H])} 2q\chi(1)^{-3} \to 0,$$
and since the minimum degree of a non-trivial representation of $[H,H]$ is greater than $q/3$, this follows
from \cite[Theorem~1.1]{LiSh3}.
%
\end{proof}

\begin{proof}[Proof of Theorem \ref{applications}]
In the proof of parts (i)--(iv) we take $N^*(l) = N(l)$ (defined in Theorem \ref{main-inf}).
In the proof of parts (v) and (vi) we have $N^*(l) \ge N(l)$.
Hence in any case we have $N^*(l) > 3$ for all $l$, which implies that $d > 3$.

\smallskip
(i) This follows immediately from Theorem~\ref{main-inf} and Proposition~\ref{flat}.

\smallskip
(ii) As $\Hom(\Gamma,\uG)$ is the fiber of the word map $w_{\uG}$ over the identity, this follows from  (i)
and the fact \cite[Corollaire~6.1.2]{EGA42} that all non-empty fibers of a flat morphism of affine varieties $\uX\to \uY$ have dimension $\dim \uX-\dim \uY$.

\smallskip
(iii) The quasi-finite morphisms (i.e., morphisms with finite fibers)
$\SL_n\times \GL_1\to \GL_n$ and $\GL_n \to \PGL_n \times \GL_1$
give rise to quasi-finite morphisms
\[
\Hom(\Gamma,\SL_n)\times \Hom(\Gamma,\GL_1)\to \Hom(\Gamma,\GL_n)\to \Hom(\Gamma,\PGL_n) \times \Hom(\Gamma,\GL_1).
\]
It follows from part (ii) above that
\[
\dim \Hom(\Gamma,\SL_n) = \dim \Hom(\Gamma, \PGL_n) = (d-1)(n^2-1).
\]
We therefore have the inequalities
\[
\begin{aligned}
(d-1)(n^2-1) + \dim \Hom(\Gamma,\GL_1) & \le \dim \Hom(\Gamma,\GL_n)\\
& \le (d-1)(n^2-1) + \dim \Hom(\Gamma,\GL_1),
\end{aligned}
\]
and so
\[
\dim \Hom(\Gamma,\GL_n) = (d-1)(n^2-1) + \dim \Hom(\Gamma,\GL_1).
\]
Note that $\dim\Hom(\Gamma,\GL_1)$ is $d$ or $d-1$ depending on whether $w$ belongs to $[F_d,F_d]$ or not.
Thus $\dim \Hom(\Gamma,\GL_n) = (d-1)n^2 + a$ where $a=0$ if $w \not\in [F_d,F_d]$ and $a=1$ otherwise.

\smallskip
(iv) Note that for any group $\Gamma$ we have
\[
a_n(\Gamma) = |\Hom_{\trans}(\Gamma,\SSS_n)|/(n-1)!,
\]
where $\Hom_{\trans}(\Gamma, \SSS_n)$ is the set of homomorphisms from $\Gamma$ to $\SSS_n$ with transitive image.
See for instance \cite[1.1.1]{LuSe}.

By Proposition \ref{waring-alt}, we have for $\gamma=\gamma(w)$ (and so $b = 1+\gamma$) that
\[
\lim_{n \to \infty} \Vert p_{w,\SSS_n}- U^\gamma_{\SSS_n} \Vert_{L^\infty}= 0.
\]
It follows that
$P_{w,\SSS_n}(1) \sim b/n!$,
hence
\[
|\Hom(\Gamma, \SSS_n)| \sim b \cdot n!^{d-1}.
\]

Now, the probability $Q_n$ that $\phi \in \Hom(\Gamma, \SSS_n)$ is not in $\Hom_{\trans}(\Gamma, \SSS_n)$
satisfies
\[
Q_n \le \sum_{1 \le k \le n/2} {n \choose k} |\Hom(\Gamma, M_k)|/|\Hom(\Gamma, \SSS_n)|,
\]
where $M_k = \SSS_k \times \SSS_{n-k}$, the stabilizer of $\{ 1, \ldots ,k \}$ in $\SSS_n$.
We have
\begin{align*}
|\Hom(\Gamma, M_k)| & = |\Hom(\Gamma, \SSS_k)| \cdot |\Hom(\Gamma, \SSS_{n-k})| \\
& \le (b+o_k(1))k!^{d-1} \cdot (b+o_n(1))(n-k)!^{d-1}.
\end{align*}
Therefore
\[
Q_n \le \sum_{1 \le k \le n/2} {n \choose k}  \frac{(b+o_k(1))(b+o_n(1))}{b+o_n(1)} \cdot \frac{k!^{d-1}(n-k)!^{d-1}}{n!^{d-1}},
\]
and since $d \ge 3$ we see that
\[
Q_n \le \sum_{1 \le k \le n/2} (b+o_k(1)){n \choose k}^{-(d-2)} = O\Bigl(\sum_{1 \le k \le n/2} {n \choose k}^{-1}\Bigr) \to 0  \; {\rm as} \; n \to \infty.
\]
Therefore, as $n \to \infty$, almost all homomorphisms from $\Gamma$ to $\SSS_n$ have transitive image.
This yields
\[
|\Hom_{\trans}(\Gamma, \SSS_n)| \sim |\Hom(\Gamma, \SSS_n)| \sim b n!^{d-1}.
\]
Dividing both sides by $(n-1)!$ we obtain $a_n(\Gamma) \sim bn \cdot n!^{d-2}$, proving the main assertion
of Theorem \ref{applications}(iv). To prove the second assertion, note that $a_n(F_{d-1}) \sim n \cdot n!^{d-2}$ (see \cite[2.1]{LuSe}), and
this yields $a_n(\Gamma)/a_n(F_{d-1}) \to b$ as $n \to \infty$.

To prove the remaining statements in Theorem \ref{applications}, we use another consequence of Theorem \ref{main-inf} that
$$\lim_{|G| \to \infty}|G|^{1-d}|\Hom(\Gamma,G)| = 1$$
when $G$ runs over the finite simple groups. Thus Lemma \ref{small-max} applies to $\Delta = \Gamma$.

\smallskip
(v) Denote by $\Hom_{\prim}(\Gamma, \SSS_n)$ the set of homomorphisms from $\Gamma$ to $\SSS_n$ with
primitive image. Then we have
\[
m_n(\Gamma) = |\Hom_{\prim}(\Gamma, \SSS_n)|/(n-1)!.
\]
The argument is now similar to the one given above in (iv), except that we also have to take the maximal subgroups $M\cap \AAA_n$ with
$M = \SSS_{n/k} \wr \SSS_k$ into account.  Applying Lemma~\ref{small-max} to $\Delta=\Gamma$,
we need only consider $k$-values which are less than $n^\epsilon$.
By Proposition \ref{sn-impr}, the set of homomorphisms  $\varphi\colon \Gamma\to \AAA_n$ 
for which there exists a partition into $k$ sets of cardinality $n/k$ which $\varphi(\Gamma)$ respects has cardinality less than
$$[\SSS_n:M]|\Hom(\Gamma,M)| = (1+o(1)) n! |M|^{d-2} = O(n^{2-d}|\Hom(\Gamma,\AAA_n)|).$$
As $d>3$, the sum over all $k$ values is $o(|\Hom(\Gamma,\AAA_n)|)$.

\smallskip
(vi)
We apply Lemma~\ref{small-max} to $\Delta=\Gamma$ to bound the probability $Q(G)$ defined as in the proof of the lemma.
First assume that $G$ is of Lie type of bounded rank $r$. Choosing $N$ (hence $d$) large, as we may, and using for instance
Tables 5.2.A and 5.3.A of \cite{KlL},
we have that all the maximal subgroups of $G$ satisfy $|M| < |G|^{1-1/(d-1)}$.
So by Lemma \ref{small-max} we obtain that $Q(G) \to 0$
as $|G| \to \infty$ for such simple groups $G$.

\smallskip
For alternating groups $G = \AAA_n$, by Bochert's theorem (see \cite[3.3B]{DM}), for $n$ sufficiently large,
we need only consider maximal subgroups of the types considered in (iv) and (v), and so we are done in this case.

\smallskip
Now let $G$ be a simple classical group of large rank. Choosing $N$ (hence $d$) large, we may assume that
Proposition \ref{classical-target} holds for a fixed $0 < \epsilon < 1/(d-2)$.
If $\Hom(\Gamma,G)$ fails to be surjective,
its image is contained in a subgroup $P$ of $G$ maximal among all subgroups not containing $[G,G]=G$.
By Lemma \ref{small-max}, the probability of this event, but under the condition that $|P| \leq |G|^{1-\epsilon}$
tends to $0$ when $|G| \to \infty$. So it remains to bound this probability under the condition that $|P| > |G|^{1-\epsilon}$.
By Proposition~\ref{classical-target}, for each such $P$, the probability of this is less than $[G:P]^{-2}$.
Together with \eqref{zeta-sub}, this implies that
$Q(G)\to 0$ as $|G|\to \infty$.
\end{proof}


\end{document}